\newtheorem{theorem}{Theorem}[section]
\newtheorem{corollary}[theorem]{Corollary}
\newtheorem{definition}[theorem]{Definition}
\newtheorem{lemma}[theorem]{Lemma}
\newtheorem{proposition}[theorem]{Proposition}
\theoremstyle{definition}
\newtheorem{remark}[theorem]{Remark}
\def\RR{{\mathbb{R}}}
\def\NN{{\mathbb{N}}}
\def\11{\textbf{$1$}}
\newcommand{\supp}{\mathrm{supp}}
\newcommand{\fatou}{{\mathfrak{f}}}
\newcommand{\one}{\mathbf{1}}
\newcommand{\vr}{\varepsilon}
\newcommand{\ave}{{\mathrm{Ave}}}
\newcommand{\BP}{\mathrm{BP}}
\newcommand{\ball}{\mathbf{B}}
\newcommand{\triple}[1]{| \! | \! | #1 | \! | \! |}
\begin{document}

\numberwithin{equation}{section}

\title[Almost band preservers]{Almost band preservers}

 \author[T. Oikhberg]{Timur Oikhberg}
 \address{Dept.~of Mathematics, University of Illinois Urbana IL 61801, USA}
 \email{oikhberg@illinois.edu}

 \author[P. Tradacete]{Pedro Tradacete}
\address{Mathematics Department, Universidad Carlos III de Madrid, E-28911 Legan\'es, Madrid, Spain.}
\email{ptradace@math.uc3m.es}

\thanks{T.O. partially supported by Simons Foundation travel award 210060.
P.T. partially supported by the Spanish Government grants MTM2013-40985, MTM2012-31286, and Grupo UCM 910346. }

\keywords{Banach lattice; band preserving operator; automatic continuity}

\subjclass[2010]{47B38, 46B42}

\maketitle

\begin{abstract}
We study the stability of band preserving operators on Banach lattices.
To this end the notion of $\varepsilon$-band preserving mapping is introduced.
It is shown that, under quite general assumptions, a $\varepsilon$-band
preserving operator is in fact a small perturbation of a band preserving one.
However, a counterexample can be produced in some circumstances.
Some results on automatic continuity of $\varepsilon$-band preserving maps are also obtained.
\end{abstract}

\maketitle

\thispagestyle{empty}

\tableofcontents

\section{Introduction}

This paper is devoted to the stability of band preserving operators on a Banach lattice. Recall that a band $Y$ in a Banach lattice $X$ is an ideal (i.e. a subspace $Y$ such that if $y\in Y$ and $|x|\leq|y|$, then $x\in Y$) which is also closed under arbitrary suprema, i.e. for every collection $(y_\alpha)_{\alpha\in A}$ in $Y$ such that $\bigvee_{\alpha\in A} y_\alpha$ exists in $X$, this element must belong to $Y$. For instance, it is easy to see that on the spaces $L_p(\Omega,\Sigma,\mu)$, every band corresponds to the set of elements supported on some $A\in\Sigma$.

A linear operator on a Banach lattice $T:X\rightarrow X$ is \emph{band-preserving} (\emph{BP} for short) if $T(Y) \subset Y$ for any band $Y \subset X$. The study of band-preserving operators can be traced back to the work of H. Nakano \cite{Nakano}.
A complete account on band-preserving operators can be found in \cite[Section 3.1]{M-N}
and in \cite[Sections 2.3 and 4.4]{A-B}, see also the survey paper \cite{Hui}.
Let us recall a useful characterization of BP operators on a Banach lattice due to Y. Abramovich, A. Veksler and A. Koldunov \cite{AVK}: Given a Banach lattice $X$, and an operator $T\in B(X)$, the following are equivalent
\begin{enumerate}
\item $T$ is band preserving.
\item $T$ is an orthomorphism, i.e. $T$ is order bounded and $|x|\wedge|y|=0$ implies $|x|\wedge|Ty|=0$.
\item $T$ is in the center of $X$, i.e. there is some scalar $\lambda>0$ such that $|Tx|\leq \lambda |x|$ for every $x\in X$.
\end{enumerate}

We say that a linear map $T : X \to X$ is $\vr$-band preserving ($\vr$-BP in short) if,
for any $x \in X$,
$$
\sup \{ \| |Tx| \wedge y \| : y \geq 0, y \perp x \}
 \leq \vr \|x\| .
$$

Our main concern  is to study when an $\vr$-BP operator is a small perturbation of a band-preserving operator. That is, given a Banach lattice $X$ and an $\vr$-$\BP$ operator $T \in B(X)$, when can one find a band-preserving $S \in B(X)$ so that $\|T-S\| \leq \phi(\vr)$ for some function $\phi$ satisfying $\phi(\vr)\rightarrow 0$ as $\vr\rightarrow0$?

We are also interested in quantitative versions of some well known facts concerning band-preserving operators. For instance, on a $\sigma$-Dedekind complete Banach lattice an operator is band-preserving if and only if it commutes with every band projection. A version of this result in terms of the size of the commutators $[T,P]$ where $P$ is a band projection and $T$ is $\vr$-BP will be given in Proposition \ref{p:commutator}. As a consequence, we obtain a quantitative version of another stability property of band-preserving operators, due to C. B. Huijsmans and B. de Pagter, that the inverse of a bijective BP operator is also BP \cite{HdP} (see Corollary \ref{c:inverse}).

The paper is organized as follows: A discussion of several properties of
almost band preserving operators as well as some equivalent characterizations
of this class can be found in Section \ref{s:band}. Recall that a band preserving map
on a Banach lattice is always bounded \cite{AVK}. Motivated by this fact,
in Section \ref{s:auto}, we will study the automatic continuity of $\vr$-BP maps.
Almost central operators and their connection with $\vr$-BP operators
will be studied in Section \ref{s:center}. In Section \ref{s:stability},
we prove that any $\vr$-BP map on a Banach lattice $X$ is a small perturbation
of a BP one, provided $X$ is order continuous (Theorem \ref{t:BP ocont})
or has Fatou norm (Proposition \ref{p:ocont}).

Section \ref{s:C(K)} contains a similar result for $\vr$-BP maps on $C(K)$ spaces
(Theorem \ref{t:C(K)}).
In Section \ref{s:counter}, we present an example of a Banach lattice $E$
with the property that, for every $\vr >0$, there exists an $\vr$-BP contraction
$T \in B(E)$ whose distance from the set of BP maps is larger than
$1/2$ (Proposition \ref{p:better counter}).

Throughout, we use standard Banach lattice terminology and notation.
For more information we refer the reader to the monographs \cite{A-B} or \cite{M-N}.
The closed unit ball of a normed space $Z$ is denoted by $\ball(Z)$.

\section{Basic properties of almost band-preserving operators}\label{s:band}

\begin{definition}\label{d:almostBP}
Given a Banach lattice $X$, we say that a linear mapping $T:X\rightarrow X$ is $\vr$-band preserving ($\vr$-BP) if, for any $x \in X$,
$$
\sup \{ \| |Tx| \wedge y \| : y \geq 0, y \perp x \} \leq \vr \|x\| .
$$
\end{definition}

Observe that every bounded operator $T:X\rightarrow X$ is trivially $\|T\|$-BP. Thus, for a bounded operator $T$, $\vr$-BP is meaningful only for $\vr<\|T\|$. Note that if two operators $T_1,T_2\in B(X)$ are such that $T_i$ is $\vr_i$-BP for $i=1,2$, then $T_1+T_2$ is $(\vr_1+\vr_2)$-BP. Similarly, if $T\in B(X)$ is $\vr$-BP, then, for any scalar $\lambda$, $\lambda T$ is $|\lambda|\vr$-BP.

In order to reformulate Definition \ref{d:almostBP} in the language of bands, we need to recall the notion of band projection. A band $Y$ of a Banach lattice $X$ is called a projection band if $X=Y\oplus Y^\perp$, where
$$
Y^\perp=\{x\in X:|x|\wedge |y|=0 \textrm{ for every } y\in Y\}.
$$
Several facts which arise for spaces with an unconditional basis can be generalized to more general Banach lattices by means of projection bands (see \cite[1.a]{LT2}).

A characterization of projection bands can be found in \cite[Proposition 1.a.10]{LT2}. In particular, if $X$ is a $\sigma$-Dedekind complete Banach lattice (i.e. every bounded sequence has a supremum and an infimum), then for each $x\in X_+$ we can consider the principal band projection $P_x$ given by
$$
P_x(z)=\bigvee_{n=1}^\infty (nx\wedge z)
$$
for $z\in X_+$, and extended linearly as $P_x(z)=P_x(z_+)-P_x(z_-)$ for a general $z\in X$. This defines a projection onto the principal band generated by $x$. Recall that a Banach lattice $X$ is said to have the \emph{Principal Projection Property} (\emph{PPP} for short) if every principal band (a band generated by a single element) is a projection band. By \cite[pp.~17-18]{M-N}, a Banach lattice has the PPP if and only if it is $\sigma$-Dedekind complete.

The study of projection bands was initiated in the classical work of S. Kakutani \cite{Kak} concerning concrete representations of Banach lattices. For properties of band projections, see \cite[Section 1.2]{M-N}. Also recall that, by \cite[Proposition 2.4.4]{M-N}, if $X$ is order continuous, then every closed ideal in $X$ is a projection band. The next proposition gives some equivalent reformulations of the definition of $\vr$-BP operator by means of band projections.

\begin{proposition}\label{p:equiv}
Given a Banach lattice $X$ and an operator $T : X \to X$, consider the following statements:
\begin{enumerate}
\item
$T$ is $\vr$-BP.
\item
For any band projection $P$ and any $x \in X$,
$\|PTx\| \leq \vr\|x\|$ whenever $Px = 0$.
\item
For any principal band projection $P$ and any $x \in X$,
$\|PTx\| \leq \vr\|x\|$ whenever $Px = 0$.
\end{enumerate}
Then $(1)\Rightarrow (2)\Rightarrow (3)$. Moreover, if $X$ is $\sigma$-Dedekind complete, then $(3)\Rightarrow(1)$.
\end{proposition}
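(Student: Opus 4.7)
My plan is to handle the three implications separately; the forward chain is quick and the converse under $\sigma$-Dedekind completeness is where the real content sits. Throughout, I will use the standard identity $|Pz|=P|z|$ valid for any band projection $P$ and any $z\in X$, which follows by writing $z=z_+-z_-$ and noting that $Pz_+$ and $Pz_-$ remain disjoint because $0\leq Pz_\pm\leq z_\pm$.

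For $(1)\Rightarrow (2)$, suppose $P$ is a band projection with $Px=0$, so $x$ lies in the disjoint complement of the range band of $P$. The natural test vector in Definition \ref{d:almostBP} is $y:=P|Tx|$, which is positive, lies in the range of $P$, and is therefore disjoint from $x$. Since $P$ is a band projection, $y=P|Tx|\leq |Tx|$, so $|Tx|\wedge y=y=P|Tx|=|PTx|$. Hypothesis (1) applied with this $y$ gives $\|PTx\|=\||Tx|\wedge y\|\leq\vr\|x\|$, which is (2). The implication $(2)\Rightarrow (3)$ is immediate, since principal band projections are particular band projections.

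For $(3)\Rightarrow (1)$ under the $\sigma$-Dedekind completeness (equivalently, the PPP) hypothesis, I fix $x\in X$ and an admissible $y\geq 0$ with $y\perp x$, and route the argument through the principal band projection $P_y$. The first sub-step is to verify $P_yx=0$: from $y\wedge|x|=0$ and the Riesz-space identity $(ny)\wedge (n|x|)=n(y\wedge|x|)=0$ (for $n\geq 1$) I deduce $(ny)\wedge x_\pm\leq (ny)\wedge|x|=0$, so each supremum defining $P_yx_\pm$ vanishes. Hence (3) yields $\|P_yTx\|\leq\vr\|x\|$. The second sub-step is the pointwise estimate $|Tx|\wedge y\leq P_y|Tx|$: since $0\leq |Tx|\wedge y\leq y$, the meet lies in the principal band of $y$ and is fixed by $P_y$, so monotonicity of $P_y$ together with $|Tx|\wedge y\leq |Tx|$ gives $|Tx|\wedge y=P_y(|Tx|\wedge y)\leq P_y|Tx|=|P_yTx|$. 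Taking norms and then the supremum over admissible $y$ yields (1).

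I do not expect any serious obstacle. The only points that need a little care are the identities $|Pz|=P|z|$, $P_y x=0$ for $y\perp x$, and the containment of $|Tx|\wedge y$ in the principal band of $y$; each of these is standard in Riesz-space theory, but the proof hinges on making all three explicit so that the right choice of auxiliary element ($y=P|Tx|$ in one direction, $P_y$ in the other) unlocks the $\vr$-BP estimate.
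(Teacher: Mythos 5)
Your proof is correct, and while your forward chain matches the paper's in substance, your converse takes a genuinely different route. For $(1)\Rightarrow(2)$ you use the same test vector $y = P|Tx|$ as the paper; your computation of the meet is in fact slightly cleaner (from $0 \leq y \leq |Tx|$ you get $|Tx|\wedge y = y$ immediately, where the paper derives $|Tx|\wedge y = P|Tx|$ by splitting $|Tx| = P|Tx| + (I-P)|Tx|$ and estimating from both sides), and the disjointness facts you invoke ($|Pz| = P|z|$, and $Px=0$ implies $x \perp P(X)$) are exactly the content of the paper's Lemma \ref{l:band proj}. The real divergence is in $(3)\Rightarrow(1)$: the paper fixes $x \perp y$ and works with $Q = I - P_x$, the complement of the principal band projection generated by $x$, noting $Qy = y$, $Qx = 0$, and $|Tx|\wedge y = (Q|Tx|)\wedge y$; you instead work with $P_y$ itself, checking $P_y x = 0$ from the formula $P_y z = \bigvee_n (ny \wedge z)$ and bounding $|Tx|\wedge y \leq P_y|Tx| = |P_y Tx|$. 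Your choice is actually the more faithful one: $I - P_x$ is a band projection but not in general a \emph{principal} band projection (for instance, in $c_0(\Gamma)$ with $\Gamma$ uncountable, which is Dedekind complete, the disjoint complement of the band generated by a unit vector is not a principal band), so the paper's appeal to (3) at that point, read literally, only uses the stronger hypothesis (2); your $P_y$ is genuinely principal, so your argument establishes $(3)\Rightarrow(1)$ exactly as stated. What the paper's route buys is uniformity, since the same meet computation from the forward direction is recycled verbatim; what yours buys is precision about which hypothesis is actually being applied, at the modest cost of verifying $P_y x = 0$ and the containment of $|Tx|\wedge y$ in the band of $y$ by hand -- both of which you do correctly.
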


We need a lemma, which may be known to experts (although we haven't found it in the literature).

\begin{lemma}\label{l:band proj}
Suppose $X$ is a Banach lattice, $x$ is a non-zero element of $X$,
and $P \in B(X)$ is a band projection. Then the following
are equivalent:
\begin{enumerate}
\item
$P x = 0$.
\item
$P |x| = 0$.
\item
$x \perp P(X)$.
\end{enumerate}
\end{lemma}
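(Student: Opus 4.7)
The plan is to reduce everything to two standard facts about a band projection $P$ on a Banach lattice: namely, that $P$ is a lattice homomorphism satisfying $|Px|=P|x|$ and $0\leq P|x|\leq|x|$, and that the range decomposition $X=P(X)\oplus (I-P)(X)$ is precisely the band decomposition $X=P(X)\oplus P(X)^\perp$, so that $(I-P)(X)=P(X)^\perp$.

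The implication $(1)\Leftrightarrow(2)$ is then immediate: since $|Px|=P|x|$, the condition $Px=0$ is equivalent to $|Px|=0$, which is equivalent to $P|x|=0$.

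For $(1)\Rightarrow(3)$, I would write $x=Px+(I-P)x=(I-P)x$, so $x\in (I-P)(X)=P(X)^\perp$, which by definition means $x\perp P(X)$.

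For $(3)\Rightarrow(1)$, suppose $x\perp P(X)$. Since $Px\in P(X)$, this gives $|x|\wedge|Px|=0$. But $|Px|=P|x|\leq |x|$, so $|Px|\wedge|x|=|Px|$, and therefore $|Px|=0$, i.e.\ $Px=0$.

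I do not anticipate any serious obstacle: the argument is essentially bookkeeping with the lattice-homomorphism property of band projections and the band complement formula $(I-P)(X)=P(X)^\perp$. The only point that might warrant a sentence of justification is the equality $|Px|=P|x|$, for which one can cite \cite[Section 1.2]{M-N}.
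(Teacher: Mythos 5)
Your proof is correct; it rests on the same two facts as the paper's --- the identity $|Px|=P|x|$ (with $0\leq P\leq I$) and the band complement formula $(I-P)(X)=P(X)^\perp$ from \cite[Lemma 1.2.8]{M-N} --- but chains the implications differently. The paper proves the cycle $(1)\Rightarrow(2)\Rightarrow(3)\Rightarrow(1)$: for $(2)\Rightarrow(3)$ it gives a direct order-theoretic argument (for positive $x,z$ with $Px=0$, $Pz=z$, the element $u=x\wedge z$ satisfies $Pu\leq Px=0$ while $u$ lies in the band $P(X)$, so $u=Pu=0$), and it reserves the complement formula for $(3)\Rightarrow(1)$, where $x\in P(X)^\perp$ forces $(I-P)x=x$. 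You instead invoke the complement formula for $(1)\Rightarrow(3)$, and your $(3)\Rightarrow(1)$ is a pleasant shortcut the paper does not use: from $x\perp P(X)$ and $Px\in P(X)$ one gets $|Px|\wedge|x|=0$, while $|Px|=P|x|\leq|x|$ gives $|Px|\wedge|x|=|Px|$, so $Px=0$ with no appeal to the complement lemma at that step. The only point you defer to a citation, $|Px|=P|x|$, is exactly what the paper's $(1)\Rightarrow(2)$ step proves from scratch: since $0\leq Px_\pm\leq x_\pm$, the elements $Px_+$ and $Px_-$ are disjoint, whence $|Px|=Px_++Px_-=P|x|$; including this three-line derivation (or the reference to \cite[Section 1.2]{M-N}, as you suggest) makes your argument fully self-contained. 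Net effect: the two proofs are of equal length and difficulty, with your version localizing the citation differently and replacing the paper's band-membership argument by a one-line lattice computation.
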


\begin{proof}
$(1) \Rightarrow (2)$: write $x = x_+ - x_-$. As $0 \leq Py \leq y$
for any $y \in X_+$, we conclude that $0 \leq P x_+ \leq x_+$, and
$0 \leq P x_- \leq x_-$. Consequently $P x_+$ and $P x_-$ are disjoint,
hence $P |x| = P x_+ + P x_- = |Px|$. If $P x = 0$, then
$P |x| = |P x| = 0$. 

$(2) \Rightarrow (3)$: It suffices to show that
$x \wedge z = 0$ whenever $x, z \in X_+$ satisfy
$Px = 0$ and $Pz = z$. Let $u = x \wedge z$. Then
$Pu \leq Px = 0$. On the other hand, $P$ is a band projection.
As $z$ belongs to the band $P(X)$, the same must be true for $u$,
hence $u=Pu=0$.

$(3) \Rightarrow (1)$: By \cite[Lemma 1.2.8]{M-N},
$I-P$ is the band projection onto $P(X)^\perp$.
Then $(I-P)x = x$, hence $Px = 0$.
\end{proof}

\begin{proof}[Proof of Proposition \ref{p:equiv}]
$(1) \Rightarrow (2)$:
Fix a norm one $x \in X$, and suppose $P$ is a band projection
so that $Px = 0$. We have to show that $\|P T x\| \leq \vr$.
If $PTx = 0$, we are done. Otherwise, let $y = |PTx|$.
By Lemma \ref{l:band proj}, $P |x| = 0$, and $x \perp P(X)$.
Moreover, by the proof of that lemma, $y = P |Tx| \perp x$.
Write $|Tx| = P |Tx| + (I-P)|Tx|$.
The ranges of $P$ and $I-P$ are mutually disjoint bands.
Therefore,
$$
|Tx| \wedge y = (P |Tx| + (I-P)|Tx|) \wedge y = (P|Tx|) \wedge y=P(|Tx|).
$$
Indeed,
$$
(P |Tx| + (I-P)|Tx|) \wedge y \leq
(P |Tx|) \wedge y + ((I-P)|Tx|) \wedge y = (P|Tx|) \wedge y ,
$$
and on the other hand, by the positivity of $I-P$,
$(P |Tx| + (I-P)|Tx|) \wedge y \geq (P|Tx|) \wedge y$.
From (1), it follows that $\|P |Tx| \| \leq \vr \|x\|$.

$(2) \Rightarrow (3)$ is clear.

$(3) \Rightarrow (1)$: Assume that $X$ is $\sigma$-Dedekind compete. For every $x\in X$ we can consider $P_x$ the band projection onto the band generated by $x$. Suppose $y$ is positive, and disjoint from $x$.
Then $Q = I-P_x$ is a band projection, with $Qy = y$.
As shown above,
$$
|Tx| \wedge y = (Q|Tx|) \wedge y,
$$
hence $\||Tx| \wedge y\| \leq \|Q|Tx|\|$.
However, $Q |Tx| = |QTx|$, hence, by (3),
$$
\||Tx| \wedge y\| \leq \|QTx\| \leq \vr \|x\|.
$$
\end{proof}

Recall that if $E$ has the PPP, then $T \in B(E)$ is BP if and only if it commutes with any band projection \cite[Proposition 3.1.3]{M-N}.
Given operators $S,T\in B(E)$, we consider their commutator $[S,T]=ST-TS$.

\begin{proposition}\label{p:commutator}
Let $T$ be an operator on a $\sigma$-Dedekind complete Banach lattice. \begin{enumerate}
\item If for every band projection $P$, $\|[P,T]\|\leq\vr$, then $T$ is $\vr$-BP.
\item If $T$ is $\vr$-BP then for any band projection $P$, $\|[P,T]\|\leq2\vr$.
\end{enumerate}
\end{proposition}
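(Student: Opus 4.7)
For part (1), the plan is to reduce to condition (3) of Proposition \ref{p:equiv}, which is equivalent to $\vr$-BP in the $\sigma$-Dedekind complete setting. Fix a principal band projection $P$ and an $x \in X$ with $Px = 0$. Then $TPx = 0$, so $PTx = PTx - TPx = [P,T]x$, and the commutator bound yields $\|PTx\| \leq \vr\|x\|$. Invoking Proposition \ref{p:equiv} gives that $T$ is $\vr$-BP.

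For part (2), the plan is to decompose along $P$ and $I-P$. Given a band projection $P$ and $x \in X$, set $u = Px$ and $v = (I-P)x$, so $x = u + v$. A direct calculation gives
$$
[P,T]x = PTu + PTv - Tu = PTv - (I-P)Tu .
$$
Now $(I-P)u = 0$ and $Pv = 0$, so the implication $(1)\Rightarrow(2)$ of Proposition \ref{p:equiv} (which does not require $\sigma$-Dedekind completeness) applied to the band projections $I-P$ and $P$ respectively yields
$$
\|(I-P)Tu\| \leq \vr \|u\| \quad \textrm{and} \quad \|PTv\| \leq \vr \|v\| .
$$
Since band projections are contractions on a Banach lattice, $\|u\|, \|v\| \leq \|x\|$, and combining via the triangle inequality produces $\|[P,T]x\| \leq 2\vr \|x\|$, as desired.

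I do not anticipate a serious obstacle here: part (1) is essentially a one-line reformulation using Proposition \ref{p:equiv}, and part (2) only requires the algebraic identity for $[P,T]x$ in terms of $u$ and $v$, together with contractivity of band projections. The only minor subtlety is making sure that the reduction in (1) really does land in condition (3) of Proposition \ref{p:equiv} (so that $\sigma$-Dedekind completeness can be used to return to condition (1)), and in (2) that the factor of $2$ is unavoidable from summing the two disjoint contributions $\|Pv\|$ and $\|(I-P)u\|$.
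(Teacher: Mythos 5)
Your proposal is correct and follows essentially the same route as the paper: part (1) is the same one-line computation $\|QTx\|=\|[Q,T]x\|\leq\vr\|x\|$ fed into Proposition \ref{p:equiv} (the paper verifies condition (2) for all band projections, you verify condition (3) for principal ones---an immaterial difference), and part (2) uses the identical decomposition $[P,T]x=PTP^\perp x-P^\perp TPx$ with the two disjoint pieces each bounded by $\vr\|x\|$ via the implication $(1)\Rightarrow(2)$ of Proposition \ref{p:equiv} and contractivity of band projections.
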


\begin{proof}
We will use the equivalence with $(2)$ in Proposition \ref{p:equiv}. Suppose first that for every band projection $P$, $\|[P,T]\|\leq\vr$. Let $Q$ be a band projection and $x$ be such that $Qx=0$, then we have
$$
\|QTx\|=\|(QT-TQ)x\|\leq\|[Q,T]\|\|x\|\leq\vr\|x\|.
$$

For the second statement, given a band projection $P$, let $P^\perp$ denote its orthogonal band projection. For $x\in X$ we have
\begin{align*}
\|(PT-TP)x\|&=\|(PT-TP)(Px+P^\perp x)\|\\
&=\|PTPx-TPx+PTP^\perp x-TPP^\perp x\|\\
&=\|-P^\perp TPx+PTP^\perp x\|\\
&\leq \|P^\perp TPx\|+\|PTP^\perp x\|
\end{align*}
Now, since $P^\perp Px=PP^\perp x=0$ we get that $\|(PT-TP)x\|\leq2\vr\|x\|$, i.e. $\|[P,T]\|\leq2\vr$.
\end{proof}

The following is a version of the result in \cite{HdP} that the inverse of a bijective BP operator is also BP.

\begin{corollary}\label{c:inverse}
Let $X$ be a $\sigma$-Dedekind complete Banach lattice.
If $T \in B(X)$ is invertible and $\vr$-BP,
then $T^{-1}$ is $(2\|T^{-1}\|^2\vr)$-BP.
\end{corollary}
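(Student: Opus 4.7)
The plan is to use Proposition \ref{p:commutator} in both directions, combined with the elementary commutator identity
\[
[P, T^{-1}] \;=\; T^{-1}\,[T,P]\,T^{-1},
\]
which holds for any invertible $T$ and any operator $P$ (one checks it directly: $T^{-1}(TP-PT)T^{-1} = PT^{-1} - T^{-1}P$).

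First I would apply Proposition \ref{p:commutator}(2) to $T$: since $T$ is $\vr$-BP, for every band projection $P$ we have $\|[P,T]\| \leq 2\vr$. Then I would estimate the commutator of $P$ with the inverse using the identity above:
\[
\|[P,T^{-1}]\| \;\leq\; \|T^{-1}\|^2 \,\|[T,P]\| \;\leq\; 2\|T^{-1}\|^2\,\vr.
\]
Finally, I would invoke Proposition \ref{p:commutator}(1) applied to $T^{-1}$ (which requires $X$ to be $\sigma$-Dedekind complete, exactly our hypothesis) to conclude that $T^{-1}$ is $(2\|T^{-1}\|^2\vr)$-BP.

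There is no real obstacle here: the argument is purely formal once one spots the commutator identity $[P,T^{-1}] = T^{-1}[T,P]T^{-1}$. The only mild point to keep in mind is that Proposition \ref{p:commutator}(1) is the direction that upgrades a uniform bound on commutators with band projections to the $\vr$-BP property, and its proof used the characterization from Proposition \ref{p:equiv}, so all the required structural hypotheses on $X$ are already in place.
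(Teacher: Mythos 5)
Your proof is correct and is essentially identical to the paper's own argument: both use Proposition \ref{p:commutator}(2) to bound $\|[P,T]\|\leq 2\vr$, the identity $T^{-1}(PT-TP)T^{-1}=PT^{-1}-T^{-1}P$ to get $\|[P,T^{-1}]\|\leq 2\|T^{-1}\|^2\vr$, and Proposition \ref{p:commutator}(1) to conclude. Nothing is missing.
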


\begin{proof}
Let $P$ be any band projection. By Proposition \ref{p:commutator}, we have $\|PT-TP\|\leq2\vr$. Therefore,
$$
\|T^{-1}P-PT^{-1}\|=\|T^{-1}(PT-TP)T^{-1}\|\leq2\|T^{-1}\|^2\vr,
$$
and the result follows by Proposition \ref{p:commutator} again.
\end{proof}

\begin{remark}\label{r:inverse}
In general, the $\|T^{-1}\|^2$ factor cannot be avoided in Corollary \ref{c:inverse},
even when $T$ is positive. Indeed, consider
$$
T =  \begin{pmatrix} 0 & \vr \\ \vr & 0 \end{pmatrix},
$$
acting on $\ell^p_2$ (with $1 \leq p \leq \infty$). Clearly $\|T\| = \vr$, hence $T$ is $\vr$-BP. However, $T^{-1}$ cannot be $c$-BP for $c < 1/\vr$. Indeed, suppose $T^{-1}$ is $c$-BP, then we have
$$
c\vr\geq\left\|T^{-1}\begin{pmatrix} \vr \\ 0 \end{pmatrix}\wedge\begin{pmatrix} 0 \\ 1 \end{pmatrix}\right\|=1.
$$
Thus, Corollary \ref{c:inverse} is sharp (up to a constant independent of $\|T^{-1}\|$).
\end{remark}

Below we show that any band-preserving operator on a K\"othe
function space is a multiplication operator.
Recall that a K\"othe function space on a $\sigma$-finite measure space
$(\Omega,\Sigma,\mu)$ is a Banach space $X$ consisting of equivalence classes,
modulo equality almost everywhere, of locally integrable functions on $\Omega$ such that:
\begin{enumerate}
\item
If $|f(\omega)|\leq|g(\omega)|$ holds a.e. on $\Omega$ with $f$ $\Sigma$-measurable and
$g\in X$, then $f\in X$ and $\|f\|\leq\|g\|$.
\item
If $S \in \Sigma$, and $\mu(S) \in (0,\infty)$, then $\chi_S \in X$.
\end{enumerate}

\begin{proposition}\label{p:ABP Kothe}
Suppose $E$ is a K\"othe function space on a Borel measure space $(\Omega,\mu)$,
and $T \in B(E)$ is band preserving. Then there exists $\phi \in L_\infty(\mu)$
so that $Tf = \phi f$ for any $f \in E$.
\end{proposition}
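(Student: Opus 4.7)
The plan is to exploit the Abramovich--Veksler--Koldunov characterization (point (3) in the introduction) to reduce to a pointwise estimate, and then to construct $\phi$ by patching together local pieces. Since $T$ is band preserving, it lies in the center of $E$, so there is $\lambda > 0$ with $|Tf| \leq \lambda |f|$ a.e.\ for every $f \in E$. The crucial consequence is a \emph{locality} property: if $f \in E$ vanishes on a measurable set $A^c$, then so does $Tf$. Splitting any $f \in E$ as $f \chi_A + f \chi_{A^c}$ (which makes sense because $E$ is an ideal in $L_0(\mu)$), this yields the identity $T(f \chi_A) = (Tf) \chi_A$ for every measurable $A$.

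Next, using $\sigma$-finiteness of $\mu$, pick an increasing exhaustion $\Omega = \bigcup_n B_n$ with $\mu(B_n) < \infty$; each $\chi_{B_n}$ belongs to $E$ by the definition of a K\"othe space. Define $\phi \in L_0(\mu)$ by requiring $\phi|_{B_n} = T(\chi_{B_n})|_{B_n}$; consistency across different $n$ follows from the locality property applied to $\chi_{B_{n+1}\setminus B_n}$. The pointwise bound $|T(\chi_{B_n})| \leq \lambda \chi_{B_n}$ forces $|\phi| \leq \lambda$ a.e., so $\phi \in L_\infty(\mu)$.

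The remaining task is to show $Tf = \phi f$ for every $f \in E$. For a measurable $A \subset B_n$, the locality identity gives $T(\chi_A) = T(\chi_{B_n} \chi_A) = T(\chi_{B_n}) \chi_A = \phi \chi_A$, and linearity extends this to simple functions supported in some $B_n$. For a bounded $f$ supported in $B_n$, a uniform approximation by such simple functions is dominated by a scalar multiple of $\chi_{B_n}$, hence converges in $E$; continuity of $T$ and of multiplication by $\phi \in L_\infty(\mu)$ then yield $Tf = \phi f$.

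For a general $f \in E$, set $A_n = B_n \cap \{|f| \leq n\}$, so that $f \chi_{A_n}$ is bounded with support of finite measure; thus $T(f \chi_{A_n}) = \phi f \chi_{A_n}$ by the previous step. Combining with the locality identity $T(f \chi_{A_n}) = (Tf) \chi_{A_n}$ gives $(Tf) \chi_{A_n} = \phi f \chi_{A_n}$, and since $A_n \uparrow \{|f| < \infty\}$ (a conull set), we conclude $Tf = \phi f$ a.e. The main obstacle is precisely this last step: in a general K\"othe space such as $L_\infty(\mu)$ the truncations $f \chi_{A_n}$ need not converge to $f$ in norm, so one cannot apply continuity of $T$ directly. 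The locality identity, which transfers $T(f \chi_{A_n})$ to $(Tf) \chi_{A_n}$ at the level of pointwise values, is precisely what sidesteps the lack of order continuity.
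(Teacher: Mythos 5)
Your proof is correct, but it follows a different route from the paper's. Both arguments define $\phi$ the same way at bottom (values of $T$ on characteristic functions of a finite-measure exhaustion), but they diverge on the key step of identifying $T$ with multiplication by $\phi$. The paper first invokes the orthomorphism machinery of Meyer--Nieberg: band preservation gives regularity (\cite[Theorem 3.1.5]{M-N}), the bound $\|\phi\|_\infty \leq \|T\|$ is obtained by a direct contradiction argument using $\chi_S (T\chi_{\Omega\setminus S})=0$, and then the whole identification $Tf=\phi f$ is outsourced to \cite[Proposition 3.1.6]{M-N}, which says an orthomorphism is determined by its value on an element generating $E$ as a band (first for finite $\mu$, then patched together in the $\sigma$-finite case). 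You instead take the central bound $|Tf|\leq\lambda|f|$ (available from the Abramovich--Veksler--Koldunov characterization, or equivalently \cite[Theorem 3.1.12]{M-N}, since $T$ is assumed bounded), extract from it the locality identity $T(f\chi_A)=(Tf)\chi_A$, and then prove $Tf=\phi f$ by hand: characteristic functions, simple functions, bounded functions of finite-measure support via dominated uniform approximation, and finally general $f$ via the truncations $f\chi_{A_n}$ combined with locality. What your approach buys is self-containedness --- the only external input is the pointwise domination $|Tf|\leq\lambda|f|$, and your closing observation is exactly right: since truncations need not converge in norm in a non-order-continuous K\"othe space such as $L_\infty(\mu)$, it is the locality identity (an equality of pointwise values, not a norm limit) that closes the argument, playing the role that the band-generation uniqueness result plays in the paper. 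The paper's route is shorter by citation and yields the slightly more precise constant $\|\phi\|_\infty\leq\|T\|$ directly, which you also recover if you take $\lambda=\|T\|$ from \cite[Theorem 3.1.12]{M-N}. All the individual steps of your argument check out: the consistency of the $\phi|_{B_n}$, the membership $\chi_A\in E$ for $A\subset B_n$ via the ideal property, the continuity of multiplication by $\phi$ from the K\"othe lattice-norm axiom, and the conull exhaustion $A_n\uparrow\{|f|<\infty\}$ at the end.
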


A similar result was established in \cite{Zaa}, and our proof
is similar to the one given there.

\begin{proof}
Suppose $T \in B(E(\Omega,\mu))$ is band-preserving. By \cite[Theorem 3.1.5]{M-N},
$T$ is regular, hence (in the terminology of \cite[Section 3.1]{M-N})
an orthomorphism.

First suppose the measure $\mu$ is finite. Let $\one = \one_\Omega$, and
set $\phi = T \one$. Note that $\phi$ is essentially bounded,
with $\|\phi\|_\infty \leq \|T\|$. Indeed, suppose otherwise, and find a set $S$
of positive measure so that $|\phi| > \|T\|\|\one\|$ $\mu$-a.e.~on $S$.
However,
$$
\chi_S \phi = \chi_S \big[ T (\chi_S + \chi_{\Omega \backslash S} ) \big] =
\chi_S \big[ T \chi_S \big] + \chi_S \big[ T \chi_{\Omega \backslash S} \big] .
$$
As $T$ is band-preserving, the last term vanishes. Thus, 
$$
\|T\chi_S\| =\|\chi_S(T\chi_S)\|=\|\chi_S\phi\|> \|T\| \|\chi_S\|,
$$
which is a contradiction.

Define the operator $S \in B(E)$ via $S f = \phi f$.
Then $S \one = T \one$. As $\one$ generates $E$ as a band,
\cite[Proposition 3.1.6]{M-N} implies $T = S$.

Now suppose $\mu$ is $\sigma$-finite. Represent $\Omega$ as an increasing
union of the sets $\Omega_i$, such that $\mu(\Omega_i)$ is finite.
Let $\mu_i = \mu|_{\Omega_i}$, and define the operator $T_i \in B(E(\Omega_i,\mu_i))$
via $T_i f = (T f) \one_{\Omega_i}$. Clearly $T_i$ is band-preserving, hence
by the above, there exists $\phi_i \in L_\infty(\mu)$, supported on $\Omega_i$,
so that $T_i f = \phi_i f$ for any $f$. It is clear that
$\phi_{i+1} \one_{\Omega_i} = \phi_i$. Due to the boundedness of $T$,
$\phi \in L_\infty(\mu)$.
\end{proof}

\begin{remark}
By \cite[Theorem 3.1.12]{M-N}, any band-preserving operator $T$ on a Banach lattice satisfies $-\|T\| I \leq T \leq \|T\| I$.
\end{remark}

Recall that an ideal $U$ in a Banach lattice $X$ is a subspace
with the property that $y\in U$ whenever $|y|\leq |x|$ and $x\in U$.
One might consider ideal preserving operators $T:X\rightarrow X$, i.e.
those satisfying that for every (closed) ideal $U\subset X$, $T(U)\subset U$.
However, this notion is actually equivalent to that of band preserving operator:
since a band is also an ideal, every ideal preserving operator is in particular
band preserving; on the other hand, if $T$ is a band preserving operator, then $|Tx|\leq \|T\| |x|$, and this in turn implies
that $T$ is ideal preserving.

We show that the same holds for ``almost'' band preserving and ideal preserving maps.

\begin{definition}
Given a Banach lattice $X$, a linear map $T:X\rightarrow X$ is $\vr$-ideal preserving
($\vr$-IP, for short) if, for every ideal $U\subset X$ and $x\in\ball(X) \cap U$,
there exist $y\in U$ and $z\in X$ with $\|z\|\leq\vr$, such that $Tx=y+z$.
\end{definition}

\begin{theorem}\label{t:BP=>IP}
Suppose $X$ is a Banach lattice, $T : X \to X$ is a linear map, and $\vr > 0$.
Consider the following statements:
\begin{enumerate}
\item
$T$ is $\vr$-BP.
\item
If $x \in \ball(X)$ and $x^* \in \ball(X^*)$ satisfy
$\langle |x^*|, |x| \rangle = 0$, then $|\langle x^*, Tx \rangle| \leq \vr$.
\item
For every $\vr^\prime>\vr$, $T$ is $\vr^\prime$-IP.
\item
For every $x\in \ball(X)$ and $\vr'>\vr$, there is $\lambda>0$
such that $|Tx|\leq\lambda|x|+z$ for some $\|z\|\leq \vr'$.
\end{enumerate}
Then $(2)\Leftrightarrow (3)\Leftrightarrow (4)\Rightarrow (1)$.
If, in addition, $T$ is bounded, then $(1)\Rightarrow (2)$
(that is, all the four statements are equivalent).
\end{theorem}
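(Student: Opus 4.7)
The plan is to handle the cyclic chain $(2) \Leftrightarrow (3) \Leftrightarrow (4) \Rightarrow (1)$ by direct lattice manipulations, and to reserve the most care for $(1) \Rightarrow (2)$, which is where the boundedness of $T$ enters.

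For $(2) \Leftrightarrow (3)$, I first identify the annihilator $\overline{I_x}^\perp = \{x^* \in X^* : \langle|x^*|, |x|\rangle = 0\}$ (where $I_x$ is the principal ideal generated by $|x|$), using the dual formula $\langle|x^*|, |x|\rangle = \sup_{|u| \leq |x|} |\langle x^*, u\rangle|$ together with continuity of $x^*$. By the Hahn--Banach distance formula, $(2)$ is then equivalent to $\mathrm{dist}(Tx, \overline{I_x}) \leq \vr\|x\|$ for every $x$. Any ideal $U \ni x$ satisfies $I_x \subseteq U$, so $\mathrm{dist}(Tx, \overline{U}) \leq \vr\|x\|$; approximating a near-optimizer in $\overline{U}$ by an element of $U$ yields the decomposition required by (3) with tolerance $\vr'$. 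Conversely, apply (3) with $U = I_x$ to obtain $Tx = y + z$ with $y \in I_x$ and $\|z\| \leq \vr'$; pairing against $x^* \in \overline{I_x}^\perp \cap \ball(X^*)$ annihilates the $y$ contribution via $|\langle x^*, y\rangle| \leq \lambda \langle|x^*|, |x|\rangle = 0$ and bounds the $z$ contribution by $\vr'$, giving (2) after letting $\vr' \searrow \vr$.

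The equivalence $(3) \Leftrightarrow (4)$ proceeds by the Riesz decomposition property. The forward direction is immediate from $U = I_x$ and $|y| \leq \lambda|x|$. For the converse, assume $|Tx| \leq \lambda|x| + z$ (we may take $z \geq 0$ by replacing $z$ with $z_+$ without increasing the norm). Decompose $(Tx)^\pm = p_\pm + q_\pm$ with $0 \leq p_\pm \leq \lambda|x|$ and $0 \leq q_\pm \leq z$; since $(Tx)^+ \perp (Tx)^-$, we inherit $p_+ \perp p_-$ and $q_+ \perp q_-$, whence $|p_+ - p_-| = p_+ \vee p_- \leq \lambda|x|$ and $|q_+ - q_-| = q_+ \vee q_- \leq z$. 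Setting $y := p_+ - p_- \in I_x \subseteq U$ and $z' := q_+ - q_-$ gives $\|z'\| \leq \|z\| \leq \vr'$, establishing (3). For $(4) \Rightarrow (1)$, use
\[
|Tx| \wedge y \leq (\lambda|x| + z) \wedge y \leq \lambda(|x| \wedge y) + z \wedge y = z \wedge y \leq z
\]
for $y \perp x$, $y \geq 0$; hence $\||Tx| \wedge y\| \leq \vr'$, and letting $\vr' \searrow \vr$ and rescaling to arbitrary $\|x\|$ recovers (1).

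The remaining implication $(1) \Rightarrow (2)$ under boundedness of $T$ is the main obstacle. By the duality reformulation above, it suffices to show $\mathrm{dist}(Tx, \overline{I_x}) \leq \vr\|x\|$. The natural approximants $u_n := ((Tx)^+ \wedge n|x|) - ((Tx)^- \wedge n|x|) \in I_x$ give $\|Tx - u_n\| = \|(|Tx| - n|x|)_+\|$, a sequence decreasing in $n$. The pointwise inequality $(a - nb)_+ \wedge b \leq a/n$ in $\RR$ transfers to $(|Tx| - n|x|)_+ \wedge |x| \leq |Tx|/n$, so $v_n := (|Tx| - n|x|)_+$ is asymptotically disjoint from $x$ with error $\|v_n \wedge |x|\| \leq \|T\|\|x\|/n$. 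The hard step is to conclude $\lim_n \|v_n\| \leq \vr\|x\|$ from (1): the element $v_n$ is only approximately, not exactly, disjoint from $x$, so (1) cannot be invoked directly, and one must use boundedness of $T$ to extract genuinely disjoint elements whose norms approximate $\lim_n \|v_n\|$, e.g.\ by iterating (replacing $v_n$ with $v_{n+k}$ and exploiting norm continuity) or by pulling back a disjoint decomposition from an appropriate Dedekind completion. This residual estimate, controlling the almost-disjoint part by $\vr\|x\|$, is where the boundedness hypothesis is essential and constitutes the principal technical difficulty of the proof.
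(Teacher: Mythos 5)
Your chain $(2)\Leftrightarrow(3)\Leftrightarrow(4)\Rightarrow(1)$ is correct: the identification of $\{x^*:\langle|x^*|,|x|\rangle=0\}$ with the annihilator of $\overline{I_x}$ plus the Hahn--Banach distance formula gives $(2)\Leftrightarrow(3)$ just as in the paper (which phrases it as a proof by contradiction), and your direct $(4)\Rightarrow(3)$ via the Riesz decomposition property --- splitting $(Tx)^{\pm}=p_{\pm}+q_{\pm}$ with $0\leq p_{\pm}\leq\lambda|x|$, $0\leq q_{\pm}\leq z$, and using disjointness of $(Tx)^+,(Tx)^-$ to control $\|q_+-q_-\|$ --- is a legitimate shortcut where the paper instead closes the cycle $(2)\Rightarrow(3)\Rightarrow(4)\Rightarrow(2)$. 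However, the proposal does not prove $(1)\Rightarrow(2)$, which is the only implication requiring boundedness and the heart of the theorem. You correctly reduce it to $\lim_n\|v_n\|\leq\vr\|x\|$ for $v_n=(|Tx|-n|x|)_+$, correctly observe that $v_n$ is only approximately disjoint from $x$ (error $\|v_n\wedge|x|\|\leq\|T\|\|x\|/n$), and then explicitly defer the decisive estimate to an unspecified iteration or a Dedekind-completion argument. Neither hint closes the gap: replacing $v_n$ by $v_{n+k}$ never yields exact disjointness (the overlap is strictly positive for every $n$ in general), and a disjoint decomposition taken in a Dedekind completion produces elements outside $X$, about which hypothesis $(1)$ --- a statement exclusively about pairs of elements of $X$ --- says nothing; one also loses the norm lower bound on the tail needed to contradict $(1)$. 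As written, the principal implication is acknowledged rather than proved.

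The idea you are missing is that one should perturb the \emph{input} $x$, not the output piece $v_n$. Arguing by contradiction, suppose $\langle|x^*|,|x|\rangle=0$ but $\langle|x^*|,|Tx|\rangle\geq c>\vr$. Represent the ideal generated by $x_0=|Tx|\vee|x|$ as $C(\Omega)$ via a lattice isomorphism $j$ with $\|j\|\leq\|T\|+1$, write $x=j(\phi)$, $|Tx|=j(\phi')$, and cut the input at level $\delta$: the element $y=j\bigl((\phi_+-\delta\one)_+-(\phi_--\delta\one)_+\bigr)$ satisfies $|y|\leq|x|$ (so $\|y\|\leq 1$) and $\|x-y\|\leq(\|T\|+1)\delta$, while a Urysohn function $h$ equal to $1$ where $|\phi|\leq\delta/2$ and $0$ where $|\phi|\geq\delta$ produces $y'=j(\phi'h)$ with $0\leq y'\leq|Tx|$ and $y'\perp y$ \emph{exactly}. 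The functional survives the cut: the positive measure $\mu=|x^*|\circ j$ annihilates $|\phi|$, and $1-h\leq 2\delta^{-1}|\phi|$, so $\|y'\|\geq\langle\mu,\phi'h\rangle=\langle\mu,\phi'\rangle\geq c$, and since $y'\leq|Tx|$ one has $\||Tx|\wedge y'\|=\|y'\|\geq c$. Boundedness then enters through a single inequality, $\||Ty|\wedge y'\|\geq\||Tx|\wedge y'\|-\|T\|\,\|x-y\|\geq c-\|T\|(\|T\|+1)\delta>\vr$, contradicting $(1)$ applied to the exactly disjoint pair $(y,y')$. In short: keep the tail dominated by $|Tx|$ so the functional $|x^*|$ provides the norm lower bound, and buy exact disjointness by shrinking $x$ itself, letting $\|T\|$ absorb the substitution $x\mapsto y$ --- this is precisely where, and only where, continuity of $T$ is used.
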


\begin{proof}
$(1)\Rightarrow (2)$, for $T$ bounded:
Suppose, for the sake of contradiction, that $(1)$ holds, but $(2)$ doesn't.
Then there exist $x \in \ball(X)$ and $x^* \in \ball(X^*)$
so that $\langle|x^*|,|x|\rangle=0$ and
$$
\langle |x^*| , |Tx| \rangle \geq \big| \langle x^* , Tx \rangle \big| = c > \vr .
$$
Pick $\delta > 0$ so that $\vr + \|T\| (\|T\|+1) \delta < c$.

For brevity of notation, let $x^\prime = |Tx|$.
We find $y, y^\prime \in X$ so that $|y| \leq |x|$,
$\|x - y\| \leq (\|T\|+1) \delta$,
$y^\prime \geq 0$, $y^\prime \perp y$, and
$\|x^\prime \wedge y^\prime \| \geq c$. Once this is achieved,
the inequality
$$
\big\| |Ty| \wedge y^\prime \big\| \geq
\big\| |Tx| \wedge y^\prime \big\| - \|T\| \|x-y\| > \vr
$$
will give the desired contradiction.

Consider the (not necessarily closed) ideal $I \subset X$, generated by
$x_0 = x^\prime \vee |x|$. In a canonical fashion, we find a bijective
lattice homomorphism $j : C(\Omega)\to I$, where $\Omega$ is a Hausdorff
compact (so $j\one=x_0$). We have
$$
K := \|j\|_{B(C(\Omega),X)} = \|j \one\|_X
 \leq \|x\| + \|x^\prime\| \leq \|T\| + 1 .
$$
Let $\phi$ and $\phi^\prime$ in $C(\Omega)$ such that $x=j(\phi)$ and $x^\prime=j(\phi^\prime)$. 
Set $\psi = (\phi_+ - \delta \one)_+ - (\phi_- - \delta \one)_+$,
and $y = j (\psi)$. Then $|\psi| \leq |\phi|$, hence $|y| \leq |x|$.
Furthermore, $\|\psi - \phi\|_\infty = \delta$, hence $\|x - y\| \leq K \delta$.

Now consider the closed sets
$\Omega_1 = \{ \omega \in \Omega : |\phi(\omega)| \leq \delta/2\}$ and
$\Omega_0 = \{ \omega \in \Omega : |\phi(\omega)| \geq \delta\}$.
By Urysohn's Lemma, there exists $h \in C(\Omega)$ so that $0 \leq h \leq 1$,
$h|_{\Omega_1} = 1$, and $h|_{\Omega_0} = 0$.

Consider $\mu  \in C(\Omega)^*$ given by
$\langle \mu , f \rangle = \langle |x^*| , j (f) \rangle$
for $f \in C(\Omega)$. Clearly $\mu$ is a positive measure, and
$\langle \mu, |\phi| \rangle = \langle |x^*|, |x| \rangle = 0$.

Now set $\psi^\prime = \phi^\prime h$. Note that $1-h \leq 2 \delta^{-1} |\phi|$,
hence $\langle \mu, (1-h)\eta \rangle = 0$ for any $\eta \in C(\Omega)$.
Consequently,
$\langle \mu, \psi^\prime \rangle = \langle \mu, \phi^\prime \rangle \geq c$.
Set $y^\prime = j (\psi^\prime)$. As $\psi \perp \psi^\prime$,
we also have $y \perp y^\prime$. Further,
$0 \leq \psi^\prime \leq \phi^\prime$, hence $0 \leq y^\prime \leq x^\prime$.
Consequently,
$$
\| x^\prime \wedge y^\prime \| = \|y^\prime\| \geq \langle |x^*|, y^\prime \rangle =
\langle \mu , \psi^\prime \rangle = \langle \mu , \phi^\prime \rangle \geq c ,
$$
which is the desired result.
\smallskip

$(2)\Rightarrow(3)$: Suppose that $(3)$ does not hold.
Then there exist $\vr'>\vr$, an ideal $U\subset X$ and $x\in \ball(U)$
such that for every $y\in U$, $\|Tx-y\|>\vr'$.
We can and do assume $U$ is closed. By Hahn-Banach Theorem,
there is $x^*\in \ball(X^*)$ such that $\langle x^*, y\rangle=0$
for every $y\in U$ and
$$
\langle x^*,Tx\rangle={\mathrm{dist}}(Tx, U)\geq\vr'>\vr.
$$
As $U$ is an ideal, we have
$$
\langle |x^*|,|x|\rangle=\sup\{|\langle x^*,y\rangle|:|y|\leq |x|\}=0.
$$
This is impossible if $(2)$ holds. 
\smallskip

$(3)\Rightarrow (4)$ is immediate by considering the principal ideal generated by $x$.
\smallskip

$(4)\Rightarrow (2)$: Suppose $x^*$ and $x$ are as in (2). Fix $\vr^\prime > \vr$
and find $\lambda$ s.t. $|Tx| \leq \lambda |x| + z$, with $\|z\| \leq \vr^\prime$.
Then
$$
|\langle x^*, Tx \rangle| \leq \langle x^*, |Tx| \rangle \leq
\langle x^*, z \rangle \leq \|z\| \leq \vr^\prime .
$$
As $\vr^\prime$ can be arbitrarily close to $\vr$, we obtain
$|\langle x^*, Tx \rangle| \leq \vr$.
\smallskip

$(4)\Rightarrow (1)$:
Pick disjoint $x \in \ball(X)$ and $y \in X_+$. For every $\vr'>\vr>0$, by $(4)$, there exist $\lambda>0$ and $z\in X$ with $\|z\|\leq \vr'$ and $|Tx|\leq\lambda|x|+z$. Without loss of generality we can take $z\geq0$. It follows that
$$
\| |Tx| \wedge y \| \leq \| (\lambda |x|+z) \wedge y \| \leq \| z \wedge y \| \leq \vr'.
$$
Since this holds for arbitrary $\vr'>\vr$ we get that $T$ is $\vr$-BP.
\end{proof}

Notice the following fact concerning the duality of band projections and almost band preserving operators. This will be useful in the stability results of Section \ref{s:stability}.

\begin{proposition}\label{p:duality} Let $X$ be a Banach lattice:\begin{enumerate}
\item If $P$ is a band projection on $X$, then $P^*$ is a band projection on $X^*$.
\item If $X$ is order continuous and $P$ is a band projection on $X^*$, then $P^*|_X$ is a band projection on $X$.
\item If $T\in B(X)$ is such that $T^*$ is $\varepsilon$-BP, then $T$ is $\varepsilon$-BP.
\item If $X$ is order continuous and $T\in B(X)$ is $\varepsilon$-BP, then $T^*$ is $\varepsilon$-BP.
\end{enumerate}
\end{proposition}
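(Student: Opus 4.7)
The four items split into two formal duality facts (1) and (2) and two $\vr$-BP transfer statements (3) and (4). The key inputs are the characterization ``a projection $Q$ on a Banach lattice is a band projection iff $0\le Q\le I$'', the characterization of $\vr$-BP in Theorem \ref{t:BP=>IP}(2), and the standard fact that an order continuous Banach lattice is realized as an ideal of its bidual under the canonical embedding.

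For (1), given a band projection $P$ on $X$ one has $0\le P\le I_X$; taking adjoints preserves positivity and the identity, so $0\le P^*\le I_{X^*}$. Since $P^*$ remains a projection, it is a band projection on $X^*$. For (2), the same observation yields $0\le P^*\le I_{X^{**}}$, so $P^*$ is a band projection on $X^{**}$. To restrict it back to $X$ I will invoke order continuity: $X$ is then an ideal of $X^{**}$, hence $x\in X_+$ together with $0\le P^*x\le x$ forces $P^*x\in X$. So $P^*|_X$ is a well-defined projection on $X$ satisfying $0\le P^*|_X\le I_X$, hence a band projection.

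For (3), I will use Theorem \ref{t:BP=>IP}(2), which for the bounded operator $T^*$ characterizes $\vr$-BP as $|\langle\xi^*,T^*\xi\rangle|\le\vr$ whenever $\xi^*\in\ball(X^{**})$, $\xi\in\ball(X^*)$, and $\langle|\xi^*|,|\xi|\rangle=0$. Given $x\in\ball(X)$ and $x^*\in\ball(X^*)$ with $\langle|x^*|,|x|\rangle=0$, regard $x$ as an element of $X^{**}$; the canonical embedding is a lattice embedding, so disjointness is preserved. Applying the hypothesis to $\xi^*=x$, $\xi=x^*$ gives $|\langle x,T^*x^*\rangle|\le\vr$, i.e.\ $|\langle x^*,Tx\rangle|\le\vr$, and one further appeal to Theorem \ref{t:BP=>IP}(2) concludes that $T$ is $\vr$-BP.

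For (4), I argue through Proposition \ref{p:equiv}: since $X^*$ is always Dedekind complete, it suffices to show that $\|QT^*\xi\|\le\vr\|\xi\|$ whenever $Q$ is a band projection on $X^*$ and $Q\xi=0$. By part (2), $P:=Q^*|_X$ is a band projection on $X$, and a brief duality computation shows $P^*=Q$. For $x\in\ball(X)$, I rewrite $\langle QT^*\xi,x\rangle=\langle\xi,TPx\rangle$; using the band-projection identities $|Px|=P|x|$ and $Q|\xi|=|Q\xi|=0$, one gets the disjointness $\langle|\xi|,|Px|\rangle=\langle Q|\xi|,|x|\rangle=0$. Theorem \ref{t:BP=>IP}(2) applied to $T$ then bounds $|\langle\xi,TPx\rangle|$ by $\vr\|\xi\|\,\|Px\|\le\vr\|\xi\|$, and taking the supremum over $x\in\ball(X)$ closes the argument. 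The main obstacle is really part (2): the ideal property of $X$ inside $X^{**}$ (which is exactly where order continuity enters) is what permits band projections on $X^*$ to be pulled back to band projections on $X$, and both the computation of $P^*=Q$ in (4) and the very existence of $P$ depend on it.
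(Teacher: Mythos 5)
Your proof is correct. Parts (1) and (2) coincide with the paper's own argument (the characterization $0 \le P \le I$ of band projections, plus the fact that an order continuous $X$ is an ideal in $X^{**}$, so $|P^*x| \le |x|$ forces $P^*x \in X$), and part (4) is essentially the paper's proof with a cosmetic variation: both reduce via Proposition \ref{p:equiv} on the Dedekind complete lattice $X^*$, pull the band projection back to $X$ using part (2) together with the identity $P^* = Q$, and then estimate $\langle \xi, TPx \rangle$ --- the paper does this by inserting $I-Q$ and applying Proposition \ref{p:equiv} to $T$, while you invoke the functional characterization of Theorem \ref{t:BP=>IP}(2) with the disjointness $\langle |\xi|, |Px| \rangle = \langle Q|\xi|, |x| \rangle = 0$; the two estimates are interchangeable. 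The genuine divergence is in part (3). The paper fixes disjoint $x \perp y$, produces a norming functional $y^*$ with $\langle |y^*|, |x| \rangle = 0$ via \cite[Lemma 1.4.3]{M-N}, expands $\langle y^*, |Tx| \rangle$ by the Riesz--Kantorovich formula \cite[Lemma 1.4.4]{M-N}, and then needs the order continuity of the evaluation functionals $\hat{x}$ on $X^*$ to upgrade ``$x$ annihilates the ideal generated by $z^*$'' to ``$x$ annihilates the band generated by $z^*$'', before concluding with Proposition \ref{p:equiv} applied to $T^*$. You bypass all of this: since Theorem \ref{t:BP=>IP}(2) is a self-dual reformulation of $\vr$-BP for bounded maps, and the canonical embedding $X \hookrightarrow X^{**}$ is a lattice homomorphism (so $|\hat{x}| = \widehat{|x|}$ and the disjointness hypothesis transfers), item (3) becomes a two-line restriction argument: apply the implication (1)$\Rightarrow$(2) of Theorem \ref{t:BP=>IP} to the bounded operator $T^*$ with $\xi^* = \hat{x}$, then (2)$\Rightarrow$(1) (which requires no boundedness) to $T$. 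Your route is shorter and avoids the auxiliary lemmas from \cite{M-N}; the paper's route stays entirely within the band-projection machinery, which is precisely what it then reuses in (4). There is no circularity in your appeal to Theorem \ref{t:BP=>IP}, as it precedes Proposition \ref{p:duality} in the paper.
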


\begin{proof}
(1): This is a direct consequence of the fact that $P$ is a band projection if and only if $P^2=P$ and $0\leq P\leq I$.

(2): By part (1), we know that $P^*$ is a band projection on $X^{**}$. For $x\in X$,
we have that $|P^*x|\leq |x|$, and since $X$ is an ideal in $X^{**}$ \cite[Theorem 2.4.2]{M-N},
it follows that $P^*x\in X$. Thus, $P^*|_X$ is a band projection on $X$.

(3): Let $x\perp y$ in $X$ with $\|x\|\leq1$ and $y\geq0$, we have to show that $\||Tx|\wedge y\|\leq\vr$. First, since $x\perp(|Tx|\wedge y)$, by \cite[Lemma 1.4.3]{M-N}, we can find $y^* \in \ball(X^*)$ so that $\langle |y^*|, |x| \rangle = 0$,
and $\langle |y^*|, |Tx| \wedge y \rangle = \| |Tx| \wedge y \|$.
Passing to $|y^*|$, we can and do assume $y^* \geq 0$.

Thus, it suffices to prove that $\langle y^*, |Tx| \rangle \leq \vr$.
By \cite[Lemma 1.4.4]{M-N},
$$
\langle y^*, |Tx| \rangle = \max \big\{ |\langle z^*, Tx \rangle| : |z^*| \leq y^* \big\} .
$$
For any such $z^*$, we have $\langle |z^*|, |x| \rangle = 0$. Therefore, $x$ annihilates on the principal ideal generated by $z^*$. Moreover, since the elements of $X$ acting on $X^*$ are order continuous functionals (cf. \cite[p. 61]{A-B}), it follows that $x$ also annihilates on the band generated by $z^*$.

Let $P$ denote the band projection onto the band generated by $z^*$. By the above, we have $P^*x=0$. Therefore, by Proposition \ref{p:equiv} we get
$$
|\langle T^*z^*,x\rangle|=|\langle P^\perp T^*z^*,x\rangle|\leq \|P^\perp T^*z^*\|\leq \vr.
$$

(4): Suppose $T$ is $\varepsilon$-BP, and let $P$ be a band projection on $X^*$.
Since $X^*$ is $\sigma$-Dedekind complete, by Proposition \ref{p:equiv},
it is enough to show that for every $x^*\in X^*$ such that $Px^*=0$
we have $\|PT^*x\|\leq\varepsilon\|x^*\|.$  According to (2), there is
a band projection on $X$, given by $Q=P^*|_X$ such that $Q^*=P$.
Let $Q^\perp=I-Q$ be the band projection onto the complementary band.
Since $T$ is $\varepsilon$-BP and $Q^\perp Qx=0$, by Proposition \ref{p:equiv},
we have that $\|Q^\perp TQx\|\leq\varepsilon\|Qx\|\leq\varepsilon\|x\|.$
Now, using the fact that $Px^*=0$, we have
\begin{eqnarray*}
\|PT^* x^*\|&=&\sup\{\langle PT^*x^*,x\rangle:x\in X,\,\|x\|\leq1\}\\
&=&\sup\{\langle x^*,TQx\rangle:x\in X,\,\|x\|\leq1\}\\
&=&\sup\{\langle x^*,(I-Q)TQx\rangle:x\in X,\,\|x\|\leq1\}\\
&\leq&\sup\{\|Q^\perp TQx\|\|x^*\|:x\in X,\,\|x\|\leq1\}\leq\varepsilon\|x^*\|,
\end{eqnarray*}
as desired.
\end{proof}

It is well known that any band-preserving operator is also disjointness preserving.
For $\vr$-BP maps, a similar result holds. Recall that an operator between Banach lattices
$T:X\rightarrow Y$ is $\vr$-disjointness preserving if $\||Tx|\wedge|Ty|\|\leq\vr$ whenever
$x,y\in \ball(X)$ satisfy $x\perp y$. This class of operators has been the object of research in \cite{OikTra}, where it was studied whether an $\vr$-disjointness preserving operator can always be approximated by a disjointness preserving one.

\begin{proposition}\label{p:BP=>DP}
If $X$ is a Banach lattice, and $T \in B(X)$ is $\vr$-BP,
then $T$ is $2\vr$-disjointness preserving.
\end{proposition}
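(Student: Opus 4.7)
My plan is to use the equivalence between $\vr$-BP and approximate ideal-preservation established in Theorem \ref{t:BP=>IP}: since $T \in B(X)$, that theorem guarantees $T$ is $\vr'$-IP for every $\vr' > \vr$. This converts the hypothesis into a Riesz-type decomposition of $Tx$ along any ideal through $x$, and a second, direct application of the $\vr$-BP definition itself takes care of the remaining piece.

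Fix $x, y \in \ball(X)$ with $x \perp y$; the goal is $\| |Tx| \wedge |Ty| \| \leq 2\vr$. The natural ideal to pick is the (norm-closed) disjoint complement $U = \{y\}^\perp = \{v \in X : |v| \wedge |y| = 0\}$, which contains $x$ by hypothesis. For each $\vr' > \vr$, $\vr'$-IP produces a decomposition $Tx = y_1 + z_1$ with $y_1 \in U$ and $\|z_1\| \leq \vr'$, so that $|Tx| \leq |y_1| + |z_1|$. The standard vector-lattice subadditivity
$$
(a+b) \wedge c \; \leq \; (a \wedge c) + (b \wedge c) \qquad (a,b,c \geq 0)
$$
then gives
$$
|Tx| \wedge |Ty| \; \leq \; |y_1| \wedge |Ty| \;+\; |z_1| \wedge |Ty|.
$$

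Each summand is easy to bound. Because $|y_1|$ is positive and disjoint from $y$ (as $y_1 \in U$), Definition \ref{d:almostBP} applied with the vector $y$ and auxiliary element $|y_1|$ yields $\| |Ty| \wedge |y_1|\| \leq \vr \|y\| \leq \vr$. The trivial estimate $|z_1| \wedge |Ty| \leq |z_1|$ gives $\| |z_1| \wedge |Ty| \| \leq \vr'$. Summing the two bounds and letting $\vr' \searrow \vr$ produces $\| |Tx| \wedge |Ty|\| \leq 2\vr$. I do not foresee any real obstacle: the only mildly technical ingredient is meet-subadditivity, which follows either pointwise via a Yosida-type representation or by a direct case check in $\RR$ using $a \wedge c = c - (c-a)_+$.
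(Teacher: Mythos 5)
Your proof is correct, but it takes a genuinely different route from the paper's. The paper proves Proposition \ref{p:BP=>DP} directly: it first reduces to a separable sublattice invariant under $T$, picks a quasi-interior point $e$, truncates $x$ and $y$ at level $ce$ to get $x^\prime, y^\prime$, and then produces explicit positive elements $a = n(e - |x|/c)_+$ and $b = n(e - |y|/c)_+$ with $a \perp x^\prime$, $b \perp y^\prime$ and $a + b \geq ne$, so that two applications of the $\vr$-BP hypothesis bound $\| |Tx^\prime| \wedge |Ty^\prime| \wedge ne \|$ by $2\vr$. You instead invoke Theorem \ref{t:BP=>IP} (the implication $(1)\Rightarrow(3)$, valid here precisely because $T$ is bounded), apply the $\vr^\prime$-IP property to the ideal $U = \{y\}^\perp$ (which indeed is an ideal --- in fact a band --- containing $x$), split $Tx = y_1 + z_1$ with $y_1 \in U$ and $\|z_1\| \leq \vr^\prime$, and finish with meet-subadditivity plus one direct application of Definition \ref{d:almostBP} to the vector $y$ with the disjoint element $|y_1|$; letting $\vr^\prime \searrow \vr$ gives $2\vr$. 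All steps check out, and there is no circularity, since Theorem \ref{t:BP=>IP} precedes Proposition \ref{p:BP=>DP} in the paper and its proof does not use it. What each approach buys: your argument is shorter and avoids both the separability reduction and the quasi-interior-point machinery, but only because the hard analysis is outsourced --- the proof of $(1)\Rightarrow(2)$ in Theorem \ref{t:BP=>IP} is exactly where boundedness of $T$, the representation of a principal ideal as $C(\Omega)$, and Urysohn's Lemma enter, which is essentially the same toolkit the paper's direct proof deploys in situ. A genuine merit of your route is that it isolates where continuity is used (solely through the cited theorem) and makes the $2\vr$ split of the constant transparent as $\vr\|y\| + \vr^\prime\|x\|$; a merit of the paper's direct proof is that its truncation scheme is self-contained and structurally parallel to Proposition \ref{p:BP=>DP_nocont}, the $\sigma$-Dedekind complete variant that dispenses with continuity altogether.
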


\begin{proof}
We have to show that, if $x$ and $y$ are disjoint elements of $\ball(X)$, then
$\| |Tx| \wedge |Ty| \| \leq 2 \vr$. We can and do assume that the Banach
lattice $X$ is separable. Indeed, it is easy to see that any separable subset of $X$
is contained in a separable sublattice invariant under $T$.

As $X$ is separable, it contains a quasi-interior point, which we call $e$:
we have $e \geq 0$, and $z = \lim_m z \wedge me$ for any $z \in X_+$.
Note that, for any $z \in X$, and $\delta > 0$, we can find $\tilde{z} \in X$
and $m \in \NN$ so that $|\tilde{z}| \leq me$, and $\|z - \tilde{z}\| \leq \delta$.
Indeed, write $z = z_+ - z_-$, and find $m$ so that, for $\sigma = \pm$,
$\|z_\sigma - z_\sigma \wedge me\| < \delta/2$. Then
$\tilde{z} = z_+ \wedge me - z_- \wedge me$ has the required properties.

Due to the continuity of $T$, we shall henceforth assume that $|x| \vee |y| \leq me$
for some $m \in \NN$. Further, by changing the quasi-interior point $e$,
we can assume $|x| \vee |y| \leq e$.

Now fix $c > 0$, and let $x^\prime = (x_+ - ce)_+ - (x_- - ce)_+$. Note
that $x_+ - ce \leq (x_+ - ce)_+ \leq x_+$, and similar inequalities holds
for $(x_- - ce)_+$. Therefore,
$$
\| x - x^\prime \| \leq \|x_+ - (x_+ - ce)_+\| + \|x_- - (x_- - ce)_+\| \leq 2c\|e\| .
$$
Analogously, we define $y^\prime= (y_+ - ce)_+ - (y_- - ce)_+$, which satisfies
$\|y - y^\prime\| < 2c\|e\|$. As $c$ can be arbitrarily small, it suffices
to show that, for any $n \in \NN$,
\begin{equation}
\label{eq:norm est}
\| |Tx^\prime| \wedge |Ty^\prime| \wedge ne \| \leq 2\vr .
\end{equation}

Let $a = n(e - |x|/c)_+$ and $b = n(e - |y|/c)_+$.
Viewing the elements we are working with as elements of the ideal
generated by $e$ (which can, in turn, be identified with $C(K)$),
we see that $a \perp x^\prime$ and $b \perp y^\prime$.
As $T$ is $\vr$-BP, we have $\| |Tx^\prime| \wedge a \| \leq \vr$
and $\| |Ty^\prime| \wedge b \| \leq \vr$. We have
$$
|Tx^\prime| \wedge |Ty^\prime| \wedge (a+b) \leq |Tx^\prime| \wedge a + |Ty^\prime| \wedge b .
$$
The inequality \eqref{eq:norm est} now follows from $a + b \geq ne$.
\end{proof}

We do not know whether the continuity of $T$ is actually necessary in Proposition \ref{p:BP=>DP}. However, for $\sigma$-Dedekind complete spaces we have:

\begin{proposition}\label{p:BP=>DP_nocont}
If $X$ is a $\sigma$-Dedekind complete Banach lattice, and $T$ is an $\vr$-BP linear map,
then $T$ is also $3\vr$-disjointness preserving.
\end{proposition}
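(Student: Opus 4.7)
The plan is to exploit the principal band projections available in $\sigma$-Dedekind complete lattices to split $|Tx|\wedge|Ty|$ into three pieces supported on pairwise disjoint bands and bound each piece separately via Proposition~\ref{p:equiv}. Since the argument will never invoke $\|T\|$, the continuity hypothesis used in Proposition~\ref{p:BP=>DP} disappears.

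Fix disjoint $x,y\in\ball(X)$ and let $P_x$, $P_y$ denote the principal band projections onto the bands generated by $x$ and $y$ respectively. The disjointness $x\perp y$ forces these two bands to be orthogonal, so $P_xP_y=P_yP_x=0$; hence $Q:=P_x+P_y$ is itself a band projection, and so is $Q^\perp:=I-Q$. Observe that $P_xx=x$, $P_yy=y$, $P_xy=0=P_yx$, and consequently $Q^\perp x=0=Q^\perp y$.

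Since band projections act as lattice homomorphisms on $X_+$, the decomposition $I=P_x+P_y+Q^\perp$ into mutually disjoint band projections yields
\begin{equation*}
|Tx|\wedge|Ty|=P_x|Tx|\wedge P_x|Ty|+P_y|Tx|\wedge P_y|Ty|+Q^\perp|Tx|\wedge Q^\perp|Ty|.
\end{equation*}
Using the identity $P|z|=|Pz|$ observed in the proof of Lemma~\ref{l:band proj}, the norms of the three summands are dominated by $\|P_xTy\|$, $\|P_yTx\|$ and $\|Q^\perp Tx\|$ respectively. The implication $(1)\Rightarrow(2)$ of Proposition~\ref{p:equiv}, whose proof uses only the defining inequality of $\vr$-BP and no boundedness of $T$, then bounds each of these three norms by $\vr$, because $y$, $x$ and $x$ are annihilated by $P_x$, $P_y$ and $Q^\perp$ respectively. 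The triangle inequality gives $\||Tx|\wedge|Ty|\|\leq 3\vr$.

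The only delicate point I expect is confirming that Proposition~\ref{p:equiv}$(1)\Rightarrow(2)$ really does go through for a possibly unbounded $\vr$-BP map; a quick inspection of its proof shows it depends only on the splitting $|Tx|=P|Tx|+(I-P)|Tx|$ together with the definition of $\vr$-BP, so no continuity is needed. This is precisely where $\sigma$-Dedekind completeness earns its keep: it is what makes the principal band projections $P_x$ and $P_y$ available in the first place, so that the three-term disjoint decomposition above even makes sense.
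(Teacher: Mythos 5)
Your proof is correct and takes essentially the same route as the paper's: the paper expands $|Tx|\wedge|Ty|=(P_x|Tx|+P_x^\perp|Tx|)\wedge(P_y|Ty|+P_y^\perp|Ty|)$ subdistributively into four terms, of which $(P_x|Tx|)\wedge(P_y|Ty|)$ vanishes since the two principal bands are disjoint, and the remaining three are each bounded by $\vr$ exactly as in your argument, giving the same constant $3\vr$. Your repackaging via the exact three-block partition $I=P_x+P_y+Q^\perp$ (using that band projections are lattice homomorphisms) is only a cosmetic variant, and your check that Proposition \ref{p:equiv}, $(1)\Rightarrow(2)$, uses no continuity of $T$ correctly identifies why the boundedness hypothesis of Proposition \ref{p:BP=>DP} can be dropped here.
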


\begin{proof}
Suppose $x$ and $y$ are disjoint elements in the unit ball of $X$. Then
\begin{align*}
|Tx| \wedge |Ty|
& =
(P_x |Tx| + P_x^\perp |Tx|) \wedge (P_y |Ty| + P_y^\perp |Ty|)
\\ & \leq
(P_x |Tx|) \wedge (P_y |Ty|) + (P_x^\perp |Tx|) \wedge (P_y |Ty|)
\\ & +
(P_x |Tx|) \wedge (P_y^\perp |Ty|) +
(P_x^\perp |Tx|) \wedge (P_y^\perp |Ty|) .
\end{align*}
By the triangle inequality,
$\| |Tx| \wedge |Ty| \| \leq 3 \vr$.
\end{proof}

Any disjointness preserving operator (hence also any band-preserving operator) is regular \cite[Theorem 3.1.5]{M-N}. Moreover,
if $T \in B(X)$ is band-preserving, then so is $|T|$. One might wonder whether the modulus of a regular $\vr$-BP operator is also $\vr$-BP. This is the case for AM-spaces and AL-spaces. Recall that a Banach lattice is an AL-space if $\|x+y\|=\|x\|+\|y\|$ whenever $x\wedge y=0$; an AM-space if $\|x+y\|=\max\{\|x\|,\|y\|\}$ whenever $x\wedge y=0$. AL-spaces are order isometric to spaces $L_1(\mu)$, while AM-spaces are order isometric to sublattices of spaces $C(K)$ \cite[1.b]{LT2}.

\begin{proposition}\label{p:modulus}
Suppose $T \in B(X)$ is a $\vr$-BP operator.
\begin{enumerate}
\item
If $X$ is an AM-space, and $T$ is regular, then $|T|$ is $\vr$-BP.
\item
If $X$ is an AL-space, then $|T|$ is $\vr$-BP.
\end{enumerate}
\end{proposition}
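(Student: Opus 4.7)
The plan is to reduce both parts to the Riesz--Kantorovich formula $|T|x=\sup\{|Ty|:|y|\leq x\}$ (for $x\geq 0$) combined with the distributivity $(\sup_i a_i)\wedge z=\sup_i(a_i\wedge z)$. Since $|T|$ is positive, $\big||T|x\big|\leq|T||x|$ and $z\perp x$ iff $z\perp|x|$, so it suffices to verify the $\vr$-BP condition for $x\geq 0$. Fix such $x$ and a disjoint $z\geq 0$. Any $y$ with $|y|\leq x$ satisfies $y\perp z$ and $\|y\|\leq\|x\|$, so $\||Ty|\wedge z\|\leq\vr\|x\|$ by the $\vr$-BP property of $T$, and distributivity yields $|T|x\wedge z=\sup_{|y|\leq x}(|Ty|\wedge z)$.

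For part (2), the AL-case, I would pass to the disjoint-partition form of Riesz--Kantorovich: $|T|x=\sup_\pi\sum_j|Tx_j|$ over the directed set of finite disjoint decompositions $x=\sum_j x_j$ with $x_j\geq 0$ (ordered by refinement, under which $\sum_j|Tx_j|$ is increasing). For each partition, $(\sum_j|Tx_j|)\wedge z\leq\sum_j(|Tx_j|\wedge z)$; AL-additivity of the norm on disjoint positives then gives
$$\Big\|\Big(\sum_j|Tx_j|\Big)\wedge z\Big\|\leq\sum_j\|\,|Tx_j|\wedge z\|\leq\vr\sum_j\|x_j\|=\vr\|x\|.$$
Order continuity of the AL-norm transfers this uniform bound to the directed supremum, yielding $\||T|x\wedge z\|\leq\vr\|x\|$.

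For part (1), the AM-case, I would work instead with the directed family $a_F=\bigvee_{y\in F}(|Ty|\wedge z)$ indexed by finite $F\subset\{y:|y|\leq x\}$, whose supremum is $|T|x\wedge z$. The defining AM-identity $\|a\vee b\|=\max\{\|a\|,\|b\|\}$ for positives gives $\|a_F\|\leq\vr\|x\|$ for every $F$. To conclude, one needs that in an AM-space a directed increasing net $a_\alpha\nearrow a$ of positives satisfies $\|a\|\leq\sup_\alpha\|a_\alpha\|$; this I would justify via the Kakutani representation $X\hookrightarrow C(K)$, observing that the constant function at level $\sup\|a_\alpha\|$ is an upper bound for the net in the ambient $C(K)$. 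The main technical point is precisely this norm-passage through the directed supremum in the AM-case: unlike AL-spaces, AM-spaces generally lack order continuity, so one must exploit the $C(K)$-structure and take some care in relating suprema in $X$ to those in $C(K)$.
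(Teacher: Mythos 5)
Your part (1) follows the paper's own argument: reduce to $x\geq 0$ via $\big||T|x\big|\leq |T||x|$ and $z\perp x\Leftrightarrow z\perp|x|$, apply the Riesz--Kantorovich formula, distribute $\wedge\, z$ through the supremum, and use the M-norm identity on finite suprema. The one step the paper writes without comment --- $\big\|\bigvee_{|y|\leq x}(|Ty|\wedge z)\big\|=\bigvee_{|y|\leq x}\big\||Ty|\wedge z\big\|$ --- is exactly the Fatou-type passage you isolate, and you are right that it is the crux. However, your proposed justification does not work as stated: in a Kakutani embedding $X\hookrightarrow C(K)$ the constant function at level $c=\sup_F\|a_F\|$ does dominate the net $(a_F)$, but the supremum $|T|x\wedge z$ is computed \emph{in the sublattice} $X$, and a supremum formed in a closed sublattice can strictly exceed every upper bound available in the ambient $C(K)$; since $c\one\notin X$ in general, you cannot conclude that the $X$-supremum lies below $c\one$. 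What is actually needed is that the norm of an AM-space is a Fatou norm with constant $1$, and the dominating element must be manufactured inside $X$, not borrowed from $C(K)$; this requires its own argument or a citation. So this step remains a gap in your write-up --- though, to be fair, it is a gap the paper's two-line computation shares, and both you and the paper also invoke the Riesz--Kantorovich equality in an AM-space, which is not Dedekind complete, without comment.

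Your part (2), by contrast, is genuinely different from the paper and essentially correct. The paper deduces (2) from (1) by duality: an AL-space is order continuous, so by Proposition \ref{p:duality} the adjoint $T^*$ is $\vr$-BP on the AM-space $X^*$; every operator on an AL-space is regular and $|T^*|=|T|^*$, so part (1) applies to $|T|^*$, and one dualizes back. You instead argue directly in $X$, via the disjoint-partition form of Riesz--Kantorovich, the bound $\big\|\big(\sum_j|Tx_j|\big)\wedge z\big\|\leq\vr\sum_j\|x_j\|=\vr\|x\|$, and order continuity of the AL-norm to pass to the directed supremum. Two small corrections: the AL-property enters only through $\sum_j\|x_j\|=\|x\|$ --- the terms $|Tx_j|\wedge z$ are not pairwise disjoint (they all sit below $z$), so the first inequality is the plain triangle inequality, not AL-additivity; and you should note why $|T|$ exists and the partition formula is valid, namely that an AL-space is Dedekind complete and every bounded operator on it is regular (the paper cites \cite{A-B} for the latter), after which the disjoint-system formula for $|T|x$ is standard. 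Your route buys something real: it avoids the duality machinery and the identity $|T^*|=|T|^*$ altogether, and, unlike the paper's derivation, it does not make (2) inherit the delicate norm-of-supremum issue from (1), since in the AL case the passage to the supremum is immediate from order continuity.
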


\begin{proof}
(1) $X$ is an AM-space. Given $x,y\in X$ with $x\perp y$ we have
\begin{eqnarray*}
\big\| \big||T|x\big|\wedge |y|\big\|&\leq& \big\| |T||x|\wedge |y|\big\|=\Big\|\Big(\bigvee_{|z|\leq |x|}|Tz|\Big)\wedge|y|\Big\|\\
&=&\Big\|\bigvee_{|z|\leq |x|}(|Tz|\wedge|y|)\Big\|=\bigvee_{|z|\leq |x|}\Big\||Tz|\wedge|y|\Big\|\\
&\leq&\bigvee_{|z|\leq |x|}\vr\|z\|=\vr\|x\|,
\end{eqnarray*}
where the last inequality follows from the fact that $z\perp y$ for every $|z|\leq|x|$ and $T$ is $\vr$-BP.

(2) $X$ is an AL-space, so in particular it is order continuous. By Proposition \ref{p:duality}, we have that $T^*$ is $\vr$-BP. Also, note that every operator on an AL-space is regular \cite[Theorem 4.75]{A-B}. By \cite[Proposition 1.4.17]{M-N}, we have that $|T^*|=|T|^*$. Since $X^*$ is an AM-space, by part (1) we get that $|T|^*$ is $\vr$-BP. Again, Proposition \ref{p:duality} yields that $|T|$ is $\vr$-BP, as claimed.
\end{proof}

\begin{remark}\label{r:BP modulus}
Proposition \ref{p:modulus} fails for general Banach lattices.
For every $\vr > 0$ there exists a regular $\vr$-BP contraction $T \in B(\ell_2)$
so that $|T|$ is not $c$-BP whenever $c < 1/2$. An example can be found in
\cite[Proposition 9.4]{OikTra}. We briefly outline the construction.

For $i \in \NN$ let $S_i$ be the $2^i \times 2^i$ Walsh unitary, and set
$T_i = I_{\ell_2^{2^i}} + 2^{-i/2} S_i \in B(\ell_2^{2^i})$.
For $\vr > 0$, find $n \in \NN$ so that $2^{-n/2} < \vr$.
Let $T = \oplus_{i \geq n} T_i$ be an operator on
$E = (\sum_{i=n}^\infty \ell_2^{2^i})_2$ (this space can be identified with $\ell_2$).
Clearly $\|T - I_E\| < \vr$, hence $T$ is $\vr$-BP.
However, as in \cite[Proposition 9.4]{OikTra}, $|T| = \oplus_i |T_i|$, where
$|T_i| = I_{\ell_2^{2^i}} + \xi_i \otimes \xi_i$, with
$\xi_i = 2^{-i/2} (1, \ldots, 1)$ is a unit vector in $\ell_2^{2^i}$.
Taking $x = 2^{(1-i)/2} (1, \ldots, 1, 0, \ldots, 0)$ and
$y = (0, \ldots, 0, 1, \ldots, 1)$ (both strings contain an equal number of
$0$'s and $1$'s), we see that $\| |T_i| x \wedge y \| = 1/2$.

One can use the same reasoning to construct, for $1 < p < \infty$ and $\vr > 0$,
a regular $\vr$-BP operator $T \in B(\ell_p)$ so that $|T|$ is $c$-BP only
when $c \geq c_p$, where $c_p > 0$ depends on $p$ only.
\end{remark}

\section{Automatic continuity}\label{s:auto}

In certain situations, $\vr$-BP linear maps are automatically continuous.

\subsection{K\"othe spaces}\label{ss:auto kothe}
Recall that a Banach lattice $X$ has \emph{Fatou norm with constant $\fatou$} if,
for any non-negative increasing net $(x_i) \subset X$, with $\sup_i \|x_i\| < \infty$,
and $\vee_i x_i \in X$, we have $\| \vee_i x_i \| \leq {\fatou} \sup_i \|x_i\|$.
For K\"othe function spaces this is equivalent to the following: if
$f, f_1, f_2, \ldots \in X$ satisfy $f_n(\omega)\uparrow f(\omega)$ a.e.,
with $f_n(\omega)\geq0$ a.e., then $\|f\|=\lim_n\|f_n\|$.
Note that a Banach lattice which has a Fatou norm with constant $\fatou$
admits an equivalent lattice norm which is Fatou with constant $1$.
Indeed, we can set
$$
\triple{x}=\inf\{\sup_i\|x_i\|:|x|=\vee_i x_i,\, x_i \textrm{ increasing, } \sup_i\|x_i\|<\infty\}.
$$
If $(X, \| \cdot \|)$ is a K\"othe function space, then the same is true for
$(X, \triple{ \cdot })$.

\begin{proposition}\label{p:auto}
Suppose $X$ is a K\"othe function space on a $\sigma$-finite measure space $(\Omega,\mu)$,
with Fatou norm.
If $T : X \to X$ is a $\vr$-BP linear map, then $T$ is continuous.
\end{proposition}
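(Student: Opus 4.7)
The strategy is the Closed Graph Theorem. I assume $f_n \to 0$ and $T f_n \to g$ in norm, and I aim to conclude $g = 0$. Without loss of generality I replace the norm on $X$ by an equivalent lattice norm whose Fatou constant equals $1$, as remarked just before the statement of the proposition.

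The first step is to pass to a subsequence along which $f_n \to 0$ pointwise $\mu$-a.e. Rechoose $(f_n)$ so that $\|f_n\| \leq 4^{-n}$, and set $A_n = \{|f_n| > 2^{-n}\}$. From $2^{-n}\chi_{A_n} \leq |f_n|$ we get $\|\chi_{A_n}\| \leq 2^{-n}$. The Fatou norm applied to the increasing sequence of finite unions produces $\chi_{\bigcup_{k\geq n}A_k} \in X$ with norm $\leq 2^{-n+1}$, and letting $n \to \infty$ forces $\chi_{\bigcap_n\bigcup_{k\geq n}A_k} = 0$ in $X$. By property~(1) of a K\"othe function space, $\limsup_n A_n$ is $\mu$-null, and off this set $|f_n|\leq 2^{-n}$ eventually, so $f_n \to 0$ a.e.

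The key technical lemma is that for any $f \in X$ supported on a measurable set $B$, one has $\|\chi_{\Omega\setminus B}\,Tf\| \leq \vr\|f\|$. Exhausting $\Omega$ by finite-measure sets $S_N$ and setting $y_N = N\chi_{S_N\cap(\Omega\setminus B)}$, each $y_N \in X_+$ is disjoint from $f$, so the $\vr$-BP hypothesis yields $\||Tf| \wedge y_N\| \leq \vr\|f\|$. Since $|Tf|\wedge y_N \uparrow \chi_{\Omega\setminus B}|Tf|$ pointwise as $N \to \infty$, the Fatou norm (with constant $1$) promotes this to the asserted bound.

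Applying the lemma with $B_n := \supp(f_n)$ gives $\|\chi_{\Omega\setminus B_n}\,Tf_n\| \leq \vr\|f_n\| \to 0$, and $T f_n \to g$ then forces $\|\chi_{\Omega\setminus B_n}\,g\| \to 0$. Because $f_n \to 0$ a.e., $\chi_{B_n} \to 0$ a.e., so the functions $h_n := \inf_{k\geq n}\chi_{\Omega\setminus B_k}\,|g|$ lie in $X$, increase pointwise a.e.\ to $|g|$, and satisfy $\|h_n\| \leq \|\chi_{\Omega\setminus B_n}\,g\| \to 0$. A final application of Fatou gives $\|g\| = 0$, completing the Closed Graph argument. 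The conceptual obstacle I anticipate is precisely the technical lemma: the $\vr$-BP inequality controls only $|Tf|\wedge y$ against admissible $y \in X_+$, and it is the Fatou norm that is indispensable in promoting this to an unconditional support estimate for $Tf$ outside $\supp(f)$.
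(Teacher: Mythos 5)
The decisive step of your argument is false: from $f_n \to 0$ a.e.\ you conclude $\chi_{B_n} \to 0$ a.e., where $B_n = \supp(f_n)$, but pointwise convergence to zero does not shrink supports. Take any strictly positive $u \in X$ (such $u$ exists by $\sigma$-finiteness) and $f_n = 4^{-n}u$: then $f_n \to 0$ in norm and a.e., yet $B_n = \Omega$ for every $n$, so $\chi_{\Omega\setminus B_n}\,Tf_n = 0$, your support estimate is vacuous, $h_n \equiv 0$, and the final Fatou application yields nothing about $g$. Nor can the gap be patched by truncating to the level sets $A_n = \{|f_n| > 2^{-n}\}$ from your first step (whose limsup you did successfully annihilate): writing $f_n = f_n\chi_{A_n} + r_n$ with $\|r_n\| \leq \|f_n\| \leq 4^{-n}$ is useless, because $T$ is not yet known to be bounded, so $Tr_n$ is completely uncontrolled. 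This is the structural obstruction to a Closed Graph argument here: graph-closedness requires controlling $Tf_n$ along an \emph{arbitrary} norm-null sequence, whereas the $\vr$-BP hypothesis gives information about $Tf$ only \emph{off} the support of $f$ and says nothing at all on $\supp(f)$, which may be everything.

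The paper avoids this entirely by proving boundedness directly via a gliding-hump and exhaustion scheme rather than via CGT: Lemma \ref{l:seq} shows that for a disjoint family $(A_i)$ the operators $TP_{A_i}$ are bounded for all but finitely many $i$ (if not, one sums disjointly supported $x_k$ with $\|x_k\| < 2^{-k}$ and $\|Tx_k\| > 2^k$ into a single $x$ and reads off $\|P_{A_k}Tx\| \geq 2^k - \vr$ using only the off-support estimate applied to $x_k$ and to $x - x_k$); Lemmas \ref{l:increase} and \ref{l:increase2} handle increasing unions, the latter being where the Fatou norm genuinely enters; and a Zorn's lemma argument produces a maximal set $A$ with $TP_A$ bounded, which must be all of $\Omega$. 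Two secondary inaccuracies in your writeup are worth noting. First, the Fatou norm as defined in the paper presupposes that the supremum already lies in $X$, so it does not by itself ``produce $\chi_{\bigcup_{k\geq n}A_k} \in X$''; that membership instead follows from the ideal property, since the series $\sum_k \chi_{A_k}$ converges in norm and its limit dominates $\chi_{\bigcup_{k\geq n}A_k}$ a.e.\ (alternatively, one can use the bounded inclusion of $X$ into $L_1$ of each finite-measure set and Borel--Cantelli). Second, your ``key technical lemma'' needs no Fatou at all, contrary to your closing remark that it is indispensable: taking $y = \chi_{\Omega\setminus B}\,|Tf|$, which is positive and disjoint from $f$ with no norm restriction required by the definition, gives $|Tf| \wedge y = y$ and hence $\|\chi_{\Omega\setminus B}\,Tf\| \leq \vr\|f\|$ directly --- exactly the observation the paper makes just before Lemma \ref{l:seq}. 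These two points are repairable; the collapse at the support-shrinking step is not, and the proposal does not prove the proposition.
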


Let us first fix some notation. For a measurable $A \subset \Omega$,
denote by $P_A$ the band projection onto the band generated by $A$ (i.e. $P_A x = \chi_A x$),
and set
$$
X_A = P_A(X) = \{x \in X : x = \chi_A x\}.
$$
For any $x \in P_A(X)$, $\|P_{A^c} T x\| \leq \vr \|x\|$.
Indeed, it suffices to apply the definition of $\vr$-BP to
$y = |P_{A^c} x|/\|P_{A^c} x\|$.

For notational convenience, we assign infinite norm to any
unbounded operator. By renorming if necessary, we can assume that the Fatou
constant of $X$ equals $1$.

\begin{lemma}\label{l:seq}
Suppose $X$ and $T$ are as in Proposition \ref{p:auto}, and
$(A_i)_{i \in I}$ is a family of disjoint subsets of $\Omega$,
each having positive measure. Then there exists $C > 0$ so that
$\|T P_{A_i}\| \leq C$ for all but finitely many indices $i\in I$.
\end{lemma}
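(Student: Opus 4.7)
The argument is a gliding-hump-style contradiction driven by the Fatou hypothesis. I would begin by negating the conclusion: if no such $C$ exists, then along some sequence $(i_n)_{n \geq 1}$ of distinct elements of $I$ the norms $\|TP_{A_{i_n}}\|$ grow arbitrarily fast, and one can choose witnesses $x_n \in X_{A_{i_n}}$ with $\|x_n\|$ rapidly summable (say $\|x_n\| \leq 2^{-n}$) yet $\|Tx_n\| \geq n$.

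The key construction is to glue these disjointly supported $x_n$ into a single element $y \in X$. Pointwise, $y = \sum_n x_n$ is well defined; to see that $y \in X$ I would use the Fatou hypothesis, applied to the increasing sequence $s_m = \sum_{n \leq m} |x_n|$: these are non-negative with uniformly bounded norms (by $1$, thanks to the disjoint supports), hence their pointwise supremum $s = \sum_n |x_n|$ lies in $X$, and since $|y| = s$ and $X$ is a K\"othe space, $y \in X$ as well with $\|y\| \leq 1$. This is the point at which Fatou is essential, because we cannot appeal to norm convergence of partial sums—$T$ is not yet known to be bounded, and $X$ need not be order continuous.

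Once $y \in X$ is in hand, I would exploit the observation recorded just before the lemma: for any measurable $A$ and any $x \in X_A$, $\|P_{A^c}Tx\| \leq \vr\|x\|$. Applied to $x_k \in X_{A_{i_k}}$ this forces essentially all of $Tx_k$ onto $A_{i_k}$, giving $\|P_{A_{i_k}}Tx_k\| \geq \|Tx_k\| - \vr\|x_k\| \geq k - \vr$; applied to $y - x_k \in X_{A_{i_k}^c}$ it forces $\|P_{A_{i_k}}T(y - x_k)\| \leq \vr\|y - x_k\| \leq 2\vr$. The triangle inequality and $\|Ty\| \geq \|P_{A_{i_k}}Ty\|$ (since band projections are contractive) then yield $\|Ty\| \geq k - 3\vr$ for every $k$, contradicting $Ty \in X$.

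The only step requiring care is the gluing of the $x_n$ into $y$, because all subsequent estimates rely on $y$ being a genuine element of $X$ without any continuity assumption on $T$. The Fatou hypothesis is tailored exactly for this: it promotes the pointwise sum of a disjointly supported summable family to an honest element of the K\"othe space, after which $Ty \in X$ automatically has finite norm and the rest of the argument is a routine application of the basic $\vr$-BP inequality to two complementary bands.
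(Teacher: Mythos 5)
Your argument is, in substance, the paper's own proof: negate the conclusion, choose disjointly supported witnesses $x_n$ with $\|x_n\|\leq 2^{-n}$ and $\|Tx_n\|$ growing without bound, glue them into $y=\sum_n x_n$, and play the two basic $\vr$-BP estimates against each other, namely $\|P_{A_{i_k}}Tx_k\|\geq \|Tx_k\|-\vr\|x_k\|$ and $\|P_{A_{i_k}}T(y-x_k)\|\leq \vr\|y-x_k\|$, to force $\|Ty\|\geq k-3\vr$ for all $k$. These estimates coincide with the paper's (which gets $\|Tx\|\geq 2^k-\vr$), and that part of your write-up is correct.

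The one genuine misstep is your justification that $y\in X$. The Fatou norm property, as defined in this paper, does not promote a pointwise supremum into the space: it \emph{assumes} $\vee_i x_i\in X$ (equivalently, in the K\"othe formulation, that $f\in X$) and only controls the norm of that supremum. So the inference ``uniformly bounded increasing partial sums, hence their pointwise supremum lies in $X$'' is not licensed by the hypothesis; a space with Fatou norm need not be maximal in that sense. Fortunately the step is true for a much simpler reason, which is what the paper silently uses: since $\sum_n\|x_n\|\leq\sum_n 2^{-n}<\infty$, the series $\sum_n x_n$ converges in norm by completeness of $X$, and since band projections are contractive one gets $P_{A_{i_k}}y=x_k$ and $y-x_k\in X_{A_{i_k}^c}$, which is all your subsequent estimates require. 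Your stated reason for rejecting norm convergence --- that $T$ is not yet known to be bounded and $X$ need not be order continuous --- is a red herring: absolute summability of $(x_n)$ in the Banach space $X$ involves neither $T$ nor order continuity. In fact the Fatou hypothesis plays no role in this lemma at all; in the paper it enters only later, in Lemma \ref{l:increase2}, where one genuinely needs $\|P_{A_n}Tx\|\to\|P_A Tx\|$ along an increasing sequence of sets.
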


\begin{proof}
Suppose otherwise. Then we can find a mutually disjoint sequence
$(x_k)$ with $\supp x_k\subset A_k$, so that, for each $k$, $\|x_k\| < 2^{-k}$, and $\|T x_k\| > 2^k$.
Let $x = \sum_{k=1}^\infty x_k$, and $\tilde{x}_k = x - x_k$.
Then
$$
\|Tx\| \geq \|P_{A_k} T (x_k + \tilde{x}_k)\| \geq
 \|P_{A_k} T x_k\| - \|P_{A_k} T \tilde{x}_k\| .
$$
But $\|P_{A_k} T x_k\| \geq \|T x_k\| - \vr \|x_k\| > 2^k - \vr \|x_k\|$,
while $\|P_{A_k} T \tilde{x}_k\| \leq \vr \|\tilde{x_k}\|$. Thus,
$$
\|Tx\| \geq 2^k - \vr(\|x_k\| +\| \tilde{x}_k\|) \geq 2^k - \vr.
$$
This inequality should hold for any $k$, which is impossible.
\end{proof}

\begin{lemma}\label{l:increase}
Suppose $X$ and $T$ are as in Proposition \ref{p:auto}.
If $(A_i)_{i \in \NN}$ is an increasing sequence of measurable subsets
of $\Omega$, so that for each $i\in\mathbb N$, $T P_{A_i}$ is bounded,
then $\sup_n \|T P_{A_n}\| < \infty$.
\end{lemma}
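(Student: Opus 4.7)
My plan is to argue by contradiction: assuming $\sup_n\|TP_{A_n}\|=\infty$, I will construct a disjointly supported sequence $(x_k)$ in $X$ with $\sum_k\|x_k\|$ summable but $\|Tx_k\|\to\infty$, sum them, and use the $\vr$-BP property on well-chosen band projections to contradict the finiteness of $\|Tx\|$.

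Since the $A_n$ are increasing, the sequence $C_n:=\|TP_{A_n}\|$ is nondecreasing (because $TP_{A_n}=TP_{A_{n+1}}P_{A_n}$ with $\|P_{A_n}\|\le 1$), finite by hypothesis and unbounded by assumption. I select indices $n_1<n_2<\cdots$ inductively so that $C_{n_k}\ge 10(8^k+C_{n_{k-1}})$, setting $C_{n_0}:=0$. For each $k$, I pick $u_k\in\ball(X)$ with $\|TP_{A_{n_k}}u_k\|\ge C_{n_k}/2$, set $v_k:=P_{A_{n_k}}u_k$ (so $\supp v_k\subset A_{n_k}$ and $\|v_k\|\le 1$), and localize to the annular piece $B_k:=A_{n_k}\setminus A_{n_{k-1}}$ via
$$w_k:=\chi_{B_k}v_k=v_k-P_{A_{n_{k-1}}}v_k.$$
The subtracted term has $T$-image bounded by $C_{n_{k-1}}\|v_k\|\le C_{n_{k-1}}$, which by our choice of $n_k$ is negligible compared with $\|Tv_k\|\ge 5(8^k+C_{n_{k-1}})$, hence $\|Tw_k\|\ge 5\cdot 8^k$. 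Setting $x_k:=w_k/4^k$ gives $\supp x_k\subset B_k$, $\|x_k\|\le 4^{-k}$, $\|Tx_k\|\ge 5\cdot 2^k$, and the $B_k$ are pairwise disjoint by construction.

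To close the argument, absolute summability gives $x:=\sum_k x_k\in X$ with $\|x\|\le 1/3$; since $T$ maps into $X$, $Tx\in X$ is a fixed element of finite norm. For each $k$, I decompose $P_{B_k}Tx=P_{B_k}Tx_k+P_{B_k}T(x-x_k)$ and apply Proposition \ref{p:equiv}(2) twice. Since $\supp x_k\subset B_k$, $P_{B_k^c}x_k=0$, yielding $\|P_{B_k^c}Tx_k\|\le\vr\|x_k\|\le\vr$ and therefore $\|P_{B_k}Tx_k\|\ge\|Tx_k\|-\vr\ge 5\cdot 2^k-\vr$. Since $x-x_k$ is supported in $\bigcup_{m\ne k}B_m\subset B_k^c$, $P_{B_k}(x-x_k)=0$, yielding $\|P_{B_k}T(x-x_k)\|\le\vr\|x-x_k\|\le\vr$. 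Combining, $\|Tx\|\ge\|P_{B_k}Tx\|\ge 5\cdot 2^k-2\vr$ for every $k$, the desired contradiction.

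The main obstacle is that, in contrast to Lemma \ref{l:seq}, the $A_n$ are nested rather than disjoint, so bad vectors supported in $A_{n_k}$ overlap across $k$. The workaround is to use the finiteness of each $C_{n_{k-1}}$ as a sink: because $C_{n_k}\gg C_{n_{k-1}}$, the $A_{n_{k-1}}$-component of the near-maximizer $v_k$ carries only negligible $T$-image, so discarding it deposits almost the entire $\|Tv_k\|$ into the disjoint annulus $B_k$, restoring the disjointness needed to run the proof of Lemma \ref{l:seq}.
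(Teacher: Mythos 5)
Your proof is correct and takes essentially the same approach as the paper: choose indices $n_k$ along which $\|TP_{A_{n_k}}\|$ grows rapidly and telescope to the disjoint annuli $B_k=A_{n_k}\setminus A_{n_{k-1}}$, using the finiteness of $\|TP_{A_{n_{k-1}}}\|$ as a sink so the annular pieces still carry large $T$-norm. The only difference is cosmetic: the paper then concludes by citing Lemma \ref{l:seq} on disjoint families, whereas you reprove that step inline by summing the disjointly supported vectors $x_k$ and applying the $\vr$-BP estimates of Proposition \ref{p:equiv}.
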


\begin{proof}
Suppose $\sup_n \|T P_{A_n}\| = \infty$.
Then there exist $1 \leq n_0 < n_1 < n_2 < \ldots$ so that
$\|T P_{A_{n_0}}\| > 1$, and $\|T P_{A_{n_{k+1}}}\| > 3 \|T P_{A_{n_k}}\|$
for every $k$. Consequently, $\|T P_{B_k}\| > 2^k$ for every $k$,
where $B_0 = A_{n_0}$, and $B_k = A_{n_k} \backslash A_{n_{k-1}}$
for $k > 0$. This contradicts Lemma \ref{l:seq}.
\end{proof}

\begin{lemma}\label{l:increase2}
In the notation of Lemma \ref{l:increase}, $T P_{\cup_i A_i}$ is bounded.
\end{lemma}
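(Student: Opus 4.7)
The plan is to combine the uniform bound $M := \sup_n \|T P_{A_n}\| < \infty$ (which Lemma \ref{l:increase} supplies) with the $\vr$-BP hypothesis to extend boundedness to $T P_A$, where $A = \cup_i A_i$. Concretely, my goal is to show $\|T P_A\| \leq M + 2\vr$. It suffices to bound $\|T x\|$ uniformly for $x \in \ball(X)$ with $P_A x = x$, since every $y \in X$ yields $T P_A y = T(P_A y)$ with $P_A y \in \ball(X) \cap X_A$ up to a norm-one rescaling.

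Fix such an $x$ and decompose it as $x = x_n + (x - x_n)$, where $x_n = P_{A_n} x$. Both $|x_n| \le |x|$ and $|x - x_n| \le |x|$ hold pointwise, so the two pieces sit inside $\ball(X)$. For the first piece, $\|P_{A_n} T x_n\| \leq \|T x_n\| \leq \|T P_{A_n}\| \cdot \|x\| \leq M$. For the second piece, $P_{A_n}(x - x_n) = 0$, so Proposition \ref{p:equiv} applied to the band projection $P_{A_n}$ yields $\|P_{A_n} T(x - x_n)\| \leq \vr$. Adding these gives the uniform truncation bound $\|P_{A_n} T x\| \leq M + \vr$ for every $n$.

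To upgrade this to a bound on $T x$ itself, I would renorm so the Fatou constant equals $1$ and pass to the limit. In the K\"othe space, $|P_{A_n} T x| = \chi_{A_n} |T x|$, and since $A_n \uparrow A$ this increases pointwise to $\chi_A |T x| = |P_A T x|$, which lies in $X$. The Fatou norm property then gives $\|P_A T x\| \leq \sup_n \|P_{A_n} T x\| \leq M + \vr$. The residual part $P_{A^c} T x$ is handled in one stroke by Proposition \ref{p:equiv}: since $P_A x = x$ forces $P_{A^c} x = 0$, we get $\|P_{A^c} T x\| \leq \vr$. Summing the two halves, $\|T x\| \leq M + 2\vr$, which is the required bound.

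The main subtlety is that the proof cannot appeal to continuity of $T$ when moving from $P_{A_n} T x$ to $P_A T x$, since continuity of $T$ is precisely what Proposition \ref{p:auto} is trying to establish in the first place. The Fatou norm plays the role that sequential continuity would ordinarily play: it upgrades the uniform bound on truncations $P_{A_n} T x$ to a bound on their monotone pointwise supremum using only the lattice structure, sidestepping any continuity assumption on $T$.
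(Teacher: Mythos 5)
Your proof is correct and follows essentially the same route as the paper's: both rest on the decomposition $x = P_{A_n}x + P_{A\setminus A_n}x$, the $\vr$-BP bound $\|P_{A_n}T(x-x_n)\|\leq\vr$ together with $\|P_{A^c}Tx\|\leq\vr$, and the Fatou norm to pass from the truncations $P_{A_n}Tx$ to $P_A Tx$. The only difference is presentational — the paper runs the same estimates as a proof by contradiction (assuming $\|Tx\| > C+2\vr$), whereas you derive the bound $\|Tx\|\leq M+2\vr$ directly.
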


\begin{proof}
Let $A = \cup_i A_i$. Let $C = \sup_i \|T P_{A_i}\|$.
Suppose, for the sake of contradiction, that there exists a norm one
$x \in P_A(X)$, so that $\|Tx\| > C + 2\vr$. Then $\|P_A Tx\| > C+\vr$.
Since $X$ has the Fatou property we have that $\|P_{A_n}Tx\| \to \|P_ATx\|$, thus $\|P_{A_n} Tx\| > C+\vr$
for $n$ large enough. Write $x = y + z$, where $y = P_{A_n} x$ and
$z = P_{A \backslash A_n} x$. We have $\|P_{A_n} T y\| \leq C$
and $\|P_{A_n} T z\| \leq \vr$, hence, by the triangle inequality,
$\|P_{A_n} T x\| \leq C + \vr$, yielding a contradiction.
\end{proof}

\begin{proof}[Proof of Proposition \ref{p:auto}]
Denote by $\Sigma$ the set of all equivalence classes of measurable
subsets of $\Omega$, of positive measure (two sets are equivalent
if the measure of their symmetric difference is $0$). Abusing the
notation slightly, we identify classes with their representatives.
Denote by $\Sigma_b$ the set of all classes ${\mathbf{S}} \in \Sigma$ so
that, for any (equivalently, all) $S \in {\mathbf{S}}$, $T P_S$
is bounded.

Note that $\Sigma_b$ is closed under finite or countably infinite unions.
The finite case is clear. To handle the infinite case, consider
$A_1, A_2, \ldots \in \Sigma_b$, and show that $A = \cup_k A_k \in \Sigma_b$
as well. Without loss of generality, we can assume $A_1, A_2, \ldots$ are
disjoint. By Lemma \ref{l:seq},
there exists $C > 0$ so that $\|T P_{A_i}\| < C$ for any $i$. Replacing now $A_k$ with
$\cup_{i\leq k} A_i$, by Lemmas \ref{l:increase} and
 \ref{l:increase2}, it follows that $\|T P_A\| \leq C + 2 \vr$.

By Zorn's Lemma (and taking the $\sigma$-finiteness of $\mu$ into account),
we see that $\Sigma_b$ contains a maximal element $[A]$. We claim that
$[A] = \Omega$. Indeed, otherwise $T P_B$ is unbounded for any $B \subset A^c$.
If $A^c$ is a union of finitely many atoms, this is clearly impossible.
Otherwise, write $A^c$ as a disjoint union of infinitely many sets $B_i$ of
positive measure. By Lemma \ref{l:seq}, $T P_{B_i}$ is bounded for some $i$
(in fact, for infinitely many $i$'s), hence $A \cup B_i \in \Sigma_b$,
contradicting the maximality of $A$.
\end{proof}

\begin{remark}\label{r:different}
In a similar fashion, one can prove the following: suppose $\vr > 0$, and
$X$ and $Y$ are K\"othe function spaces on $(\Omega,\Sigma,\mu)$.
Suppose a linear map $T : X \to Y$ has the property that, for any $S \in \Sigma$,
and any $x \in X$ satisfying $x = \chi_S x$, we have $\|\chi_{S^c} [Tx]\| \leq \vr \|x\|$.
Then $T$ is continuous.
\end{remark}

\begin{corollary}\label{c:auto OC}
For any $\vr > 0$, any $\vr$-BP linear map on an order continuous
Banach lattice is continuous.
\end{corollary}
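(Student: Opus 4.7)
The plan is to reduce to Proposition \ref{p:auto} by restricting attention to a suitable projection band of $X$ that contains any would-be witness to unboundedness. First, assuming for contradiction that $T$ is unbounded, I would pick $(x_n) \subset X$ with $\|x_n\| \to 0$ and $\|T x_n\|$ bounded away from zero, and let $Y$ be the band of $X$ generated by $\{x_n : n \in \NN\}$. Then $e := \sum_n 2^{-n}|x_n|/(1+\|x_n\|)$ converges in norm to an element of $Y$, and is a weak order unit for $Y$: if $e \wedge |y| = 0$ then each $|x_n| \wedge |y| = 0$, so $y$ is disjoint from every generator of $Y$, forcing $y = 0$.

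The next step is to check that $Y$ fits the hypotheses of Proposition \ref{p:auto}. As an ideal of the order continuous space $X$, $Y$ is itself order continuous, and order continuity automatically gives the Fatou property with constant $1$ (if $0 \leq y_\alpha \uparrow y$ in $Y$, then $y - y_\alpha \downarrow 0$, so $\|y_\alpha\| \to \|y\|$). Combined with the existence of a weak order unit, Kakutani's representation theorem (see, e.g., \cite[Section 1.b]{LT2}) identifies $Y$ order isometrically with a K\"othe function space on a probability space.

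Now, since $X$ is order continuous, every closed ideal is a projection band \cite[Proposition 2.4.4]{M-N}, so the band projection $P_Y$ onto $Y$ is defined. I would consider the ``compressed'' operator $T_Y : Y \to Y$ given by $T_Y y = P_Y T y$. The inequality $|P_Y T y| \leq |T y|$ shows that $T_Y$ inherits the $\vr$-BP property: for $y \in Y$ and $z \in Y_+$ with $z \perp y$,
\[
\||T_Y y| \wedge z\| \leq \||T y| \wedge z\| \leq \vr \|y\|.
\]
Proposition \ref{p:auto} then yields $M := \|T_Y\| < \infty$. Finally, for $y \in Y$ one has $(I-P_Y) y = 0$, so Proposition \ref{p:equiv} gives $\|(I-P_Y) T y\| \leq \vr \|y\|$, and therefore $\|T y\| \leq (M+\vr)\|y\|$. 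Since $x_n \in Y$, this contradicts our choice of $(x_n)$.

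The main obstacle I anticipate is the structural set-up of $Y$: simultaneously arranging that it is a projection band, that it carries a weak order unit, and that Kakutani's theorem presents it as a K\"othe function space, so that Proposition \ref{p:auto} actually becomes applicable. Once that is in place, splitting $T y = T_Y y + (I-P_Y) T y$ and using the $\vr$-BP property to absorb the orthogonal component is routine.
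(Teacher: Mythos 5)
Your argument is correct, but it takes a genuinely different route from the paper's. The paper first writes the whole lattice as an unconditional sum of mutually disjoint projection bands with weak order units (via \cite[Proposition 1.a.9]{LT2} and \cite[Proposition 2.4.4]{M-N}), assumes $T$ unbounded, passes to a countable subfamily $B$ of the decomposition, and then re-runs in this abstract setting the estimates of Lemmas \ref{l:seq}, \ref{l:increase} and \ref{l:increase2} (disjointification, increasing unions of bands, and an order-continuity limit) to bound $TP_B$ and reach a contradiction. You instead localize the unboundedness at the outset: a witnessing sequence $(x_n)$ generates a band $Y$ which automatically carries the weak order unit $e=\sum_n 2^{-n}|x_n|/(1+\|x_n\|)$, is order continuous (hence has Fatou norm with constant $1$) and is a projection band, and is therefore order isometric to a K\"othe function space on a probability space; so Proposition \ref{p:auto} applies verbatim to the compression $P_Y T|_Y$, whose $\vr$-BP property follows from $|P_YTy|\leq P_Y|Ty|\leq|Ty|$, while the off-band component is absorbed by the implication $(1)\Rightarrow(2)$ of Proposition \ref{p:equiv}, which indeed requires no $\sigma$-Dedekind completeness. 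All steps check out, and your route is shorter: it invokes Proposition \ref{p:auto} once as a black box instead of reproving its lemma machinery, at the cost of producing only the local bound $\|Ty\|\leq(\|P_YT|_Y\|+\vr)\|y\|$ on a single band rather than the uniform control over all bands (e.g.\ $\sup_\alpha\|TP_\alpha\|<\infty$) that the paper's global decomposition yields along the way --- but the local bound is all the corollary needs. One small correction: the representation you use is not Kakutani's theorem but the representation of order continuous Banach lattices with a weak unit as K\"othe function spaces (\cite[Theorem 1.b.14; see also pp.~25--29]{LT2}, the same result the paper cites for its bands $X_\alpha$); Kakutani's AL-space theorem is only an ingredient in its proof.
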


\begin{proof}
Suppose a Banach lattice $X$ is order continuous, and $T : X \to X$ is $\vr$-BP.
By the proof of \cite[Proposition 1.a.9]{LT2} (combined with \cite[Proposition 2.4.4]{M-N}),
$X$ can be represented as an unconditional sum of mutually orthogonal projection bands
$(X_\alpha)_{\alpha \in {\mathcal{A}}}$, having a weak order unit.
Denote the corresponding band projections by $P_\alpha$.
For any $x \in X$, $\sum_\alpha P_\alpha x$ has at most countably
many non-zero terms, and converges unconditionally. For $A \subset {\mathcal{A}}$,
 $X_A = \oplus_{\alpha \in A} X_\alpha \subset X$ is the range of the band
projection $P_A = \sum_{\alpha \in A} P_\alpha$ (indeed, $0 \leq P_A \leq I$).
For each $\alpha$, $X_\alpha$ is order isometric to a K\"othe function space
\cite[pp. 25-29]{LT2}.

Suppose, for the sake of contradiction, that $T : X \to X$ is
an unbounded $\vr$-BP map. As the unconditional decomposition of every
$x \in X$ is at most countable, there exists a countable set $B$
so that $P_B T P_B$ is unbounded. Write $B = \{\beta_1, \beta_2, \ldots\}$.

By Proposition \ref{p:auto},
$P_\alpha T P_\alpha$ is bounded for any $\alpha$, hence the same is true for
$T P_\alpha$. Note first that $\sup_\alpha \|T P_\alpha\| < \infty$.
Indeed, otherwise we can find distinct $\alpha_i$ ($i \in \NN$) and
$x_i \in X_{\alpha_i}$ so that $\|x_i\| < 2^{-i}$, but $\|P_{\alpha_i}T x_i\| > 2^i + \vr$.
Let $x = \sum_i x_i$, and $\tilde{x}_i = x - x_i$. Then for each $i$,
$$
\|Tx\|\geq \|P_{\alpha_i} T x\| \geq
\|P_{\alpha_i} T x_i\| - \|P_{\alpha_i} T \tilde{x}_i\| >
2^i + \vr - \vr = 2^i ,
$$
which is impossible.

Furthermore, let $B_n = \{\beta_1, \ldots, \beta_n\}$.
Then $\sup_n \|T P_{B_n}\| < \infty$.
Indeed, otherwise we can find $n_1 < n_2 < \ldots$ so that
there exists $x_k \in \oplus_{i \in B_{n_k} \backslash B_{n_{k-1}}} X_{\beta_i}$
with $\|x_k\| < 2^{-k}$ and $\|T x_k\| > 2^k + \vr$.
Obtain a contradiction by considering $x = \sum_k x_k$
(as in Lemma \ref{l:increase}).

Finally set $C = \sup_n \|T B_n\|$. Pick a norm one $x \in X_B$.
By the order continuity of $X$, $P_{B_n} \to P_B$ point-norm,
hence for every $\delta > 0$ there exists $n$ so that
$\|P_{B_n} T x\| > \|P_B T x\| - \delta$. But
(reasoning as in the proof of Lemma \ref{l:increase2})
$$
\|P_{B_n} T x\| \leq \|P_{B_n} T P_{B_n} x\| + \|P_{B_n} T P_{B \backslash B_n} x\|
\leq C + \vr ,
$$
hence $\|T x\| \leq C + 2 \vr + \delta$.
This contradicts our assumption that $T P_B$ is unbounded.
\end{proof}

\subsection{$C_0(K,X)$ spaces}\label{ss:auto C(K)}

If $X$ is a Banach lattice, and $K$ is a locally compact Hausdorff space,
let $C_0(K,X)$ denote the space of continuous functions $f:K\rightarrow X$,
having the property that, for any $\vr > 0$, there exists a compact set $\Omega$
so that $\|f(t)\| < \vr$ whenever $t \notin \Omega$.
We endow $C_0(K,X)$ with the norm $\|f\|=\sup_{t\in K}\|f(t)\|_X$, thus
turning it into a Banach lattice with the pointwise order.

\begin{theorem}\label{t:cont on CK}
Suppose $X$ is a K\"othe function space on a $\sigma$-finite measure space $(\Omega,\mu)$
with the Fatou property, and $K$ a locally compact Hausdorff space.
Then any $\vr$-BP linear map on $C_0(K,X)$ is automatically continuous.
\end{theorem}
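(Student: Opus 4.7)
The plan is to argue by contradiction via a localization-and-disjointification scheme in the $K$-direction, in the spirit of Lemma~\ref{l:seq} and Proposition~\ref{p:auto} adapted to the continuous setting. The key preliminary is a pointwise version of the $\vr$-BP inequality: for any $g \in C_0(K,X)$ vanishing on an open neighborhood $W$ of $t_0 \in K$, one has $\|(Tg)(t_0)\|_X \le \vr \|g\|$. To see this, choose $\zeta \in C_c(W)$ with $\zeta(t_0)=1$ and $0\le\zeta\le 1$, and set $g'(t) := \zeta(t)\,|(Tg)(t_0)|$; then $g' \in C_0(K,X)_+$ is disjoint from $g$ (their $K$-supports are disjoint), so the $\vr$-BP inequality gives $\||Tg|\wedge g'\|_{C_0(K,X)} \le \vr\|g\|$, which collapses to $\|(Tg)(t_0)\|_X \le \vr\|g\|$ upon evaluation at $t_0$, since $g'(t_0)=|(Tg)(t_0)|$.

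Assuming $T$ is unbounded, I would pick $f_n \in C_0(K,X)$ with $\|f_n\|\le 2^{-n}$, $\|Tf_n\|\ge 4^n$, and witnessing points $t_n \in K$ satisfying $\|(Tf_n)(t_n)\|_X \ge 4^n/2$. Using Urysohn cutoffs $\eta_n \in C_c(K)$ equal to $1$ on a neighborhood $V_n$ of $t_n$ and supported in some $U_n$ with compact closure, the pointwise bound applied to $(1-\eta_n)f_n$ (which vanishes on $V_n$) forces $\|(T(\eta_n f_n))(t_n)\|_X \ge 4^n/4$ for large $n$. After replacing $f_n$ with $\eta_n f_n$, I obtain functions supported in $\overline{U_n}$ retaining the asymptotic estimate at $t_n$.

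Next, one extracts a subsequence on which the compact supports $\overline{U_n}$ are pairwise disjoint. If $(t_n)$ has no accumulation point in $K$ this is routine after shrinking the $U_n$'s. If $(t_n)$ accumulates at some $t^*\in K$, pass to a subsequence with $t_n \to t^*$, $t_n\neq t^*$ (replacing each $t_n$ by a nearby $t_n'\neq t^*$ via continuity of $Tf_n$ if necessary), and use Hausdorffness to separate the $U_n$'s inside $K\setminus\{t^*\}$. Then set $f = \sum_n f_n$, which is absolutely convergent in $C_0(K,X)$, and for fixed $j$ and $n>j$ write $f = f_j + \sum_{k\le n,\,k\neq j} f_k + R_n$ with $R_n = \sum_{k>n} f_k$. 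Applying the pointwise bound at $t_j$ to every summand other than $f_j$ (each vanishes on the neighborhood $U_j$ of $t_j$) yields
\[
\|(Tf)(t_j)\|_X \;\ge\; \tfrac14\cdot 4^j - 2\vr - \vr\,2^{-n},
\]
and letting $n\to\infty$ and then $j\to\infty$ gives $\sup_j\|(Tf)(t_j)\|_X = \infty$, contradicting $Tf \in C_0(K,X)$.

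The main obstacle is the degenerate subcase where $t_n=t^*$ for infinitely many $n$: the cutoff step alone does not deliver distinct witness points, and I would have to exploit continuity of each $X$-valued function $Tf_n$ in a neighborhood of $t^*$ to produce alternative $t_n'\neq t^*$ with $\|(Tf_n)(t_n')\|_X$ still of order $4^n$, followed by a second localization around $t_n'$. A minor separate consideration is needed when $t^*$ happens to be isolated in $K$; there $\chi_{\{t^*\}}$ is continuous and the $X$-fiber at $t^*$ is a genuine projection band, so the boundedness of $T$ on this fiber reduces to a direct application of Proposition~\ref{p:auto} to the K\"othe space $X$.
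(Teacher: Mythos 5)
Your overall architecture is in fact the paper's own: your pointwise estimate is exactly its Lemma \ref{l:vectorval}, your cutoff step is its Lemma \ref{l:approx}, your disjointly supported sum $f=\sum_n f_n$ with the evaluation bound at $t_j$ is its Lemma \ref{l:sequence}, your continuity-transfer of witness points off a non-isolated accumulation point plays the role of its Lemma \ref{l:converge}, and your treatment of an isolated $t^*$ via the fiber operator $\delta_{t^*}Tj_{t^*}$ (which one checks is $\vr$-BP on $X$, hence bounded by Proposition \ref{p:auto}) is precisely how the paper uses the K\"othe/Fatou hypothesis. But there is one genuine gap: the step ``if $(t_n)$ accumulates at some $t^*\in K$, pass to a subsequence with $t_n\to t^*$.'' In an arbitrary locally compact Hausdorff space an accumulation point of a sequence need not be the limit of any subsequence, since $K$ is not assumed first countable: in $\beta\NN$, for instance, the sequence $t_n=n$ accumulates at every point of $\beta\NN\setminus\NN$ yet has no convergent subsequence at all. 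So your dichotomy ``no accumulation point / subsequence converging to $t^*$'' does not exhaust the possibilities; and the same problem recurs after you replace degenerate witnesses $t_n=t^*$ by nearby points $t_n'$, because ``arbitrarily close to $t^*$'' yields no convergent sequence either when $t^*$ has no countable neighborhood base.

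What your argument actually needs -- and what the paper isolates as Lemma \ref{l:disj seq} -- is that from \emph{any} sequence of distinct points in a locally compact Hausdorff space one can extract a subsequence admitting pairwise disjoint open neighborhoods, with no case analysis on accumulation behavior. The proof is a recursion resting on the observation that at most one term of the sequence can have the property that every one of its neighborhoods contains all but finitely many terms (two such points could not be Hausdorff-separated); every other term has a neighborhood whose closure misses infinitely many terms, which drives the recursive selection. Substituting this lemma for your dichotomy repairs the argument -- with the minor reordering that the separation should be performed \emph{before} the cutoff localization, so the supports $\overline{U_n}$ can be taken inside the disjoint neighborhoods (shrinking them afterwards would force you to redo the localization). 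With those corrections, everything else in your proposal, including the final estimate $\|(Tf)(t_j)\|_X\geq \tfrac14\, 4^j-2\vr-\vr\,2^{-n}$ contradicting $Tf\in C_0(K,X)$, and the reduction of the isolated-point case to Proposition \ref{p:auto}, is sound and coincides with the published proof.
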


Applying this theorem with $X = \RR$, we conclude that any $\vr$-BP linear map on $C_0(K)$
is automatically continuous.

For the proof we need a topological result (cf.~\cite{Per}).

\begin{lemma}\label{l:disj seq}
Suppose $(s_n)_{n \in \NN}$ are distinct points in a locally compact
Hausdorff space $K$.
Then there exist a family of disjoint open sets $(U_k)_{k \in \NN}$
so that $s_{n_k} \in U_k$ for any $k$ ($n_1 < n_2 < \ldots$).
\end{lemma}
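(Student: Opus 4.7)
The plan is to pass to the one-point compactification $K^{*} = K \cup \{\infty\}$, which is compact Hausdorff (hence normal and regular), and construct the $n_k$'s and $U_k$'s by induction, using a cluster point of $(s_n)$ in $K^{*}$ to feed the recursion.

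First, since $K^{*}$ is compact, the sequence $(s_n)$ has a cluster point $q \in K^{*}$: every neighborhood of $q$ contains $s_n$ for infinitely many $n$. This follows from the finite intersection property applied to the nested non-empty closed sets $F_N = \overline{\{s_m : m \geq N\}}$ in $K^{*}$; any $q \in \bigcap_N F_N$ has the desired property. Note that $q$ may lie in $K$, or $q$ may equal $\infty$ (which is exactly the case where $(s_n)$ escapes every compact subset of $K$ infinitely often); both possibilities will be handled uniformly.

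I would then build inductively $n_1 < n_2 < \ldots$ and pairwise disjoint open subsets $U_1, U_2, \ldots$ of $K^{*}$ with the properties that each $\overline{U_k}$ is compact and disjoint from $\{q, \infty\}$, and $s_{n_k} \in U_k$. Given $n_1 < \ldots < n_k$ and $U_1, \ldots, U_k$ with these properties, the open set $V_k := K^{*} \setminus (\overline{U_1} \cup \cdots \cup \overline{U_k})$ is a neighborhood of $q$, and hence meets $\{s_m : m > n_k\}$ in infinitely many points. All but at most one of these points differ from $q$, and none equals $\infty$ (since $s_n \in K$), so some such point may serve as $s_{n_{k+1}}$. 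The compact set $\overline{U_1} \cup \cdots \cup \overline{U_k} \cup \{q, \infty\}$ does not contain $s_{n_{k+1}}$, and regularity of $K^{*}$ provides an open $U_{k+1} \ni s_{n_{k+1}}$ with $\overline{U_{k+1}}$ disjoint from it; shrinking $U_{k+1}$ inside a compact neighborhood (available in $K^{*}$) ensures $\overline{U_{k+1}}$ is compact.

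Since $\infty \notin \overline{U_k}$, each $U_k$ is open in $K$, so $(n_k)$ and $(U_k)$ satisfy all the requirements of the lemma. The only substantive ingredient is the extraction of the cluster point $q$: it is what guarantees that at each inductive step there are still fresh terms of the sequence lying outside the already-used closed region, which is the one thing that a naive induction directly in $K$ cannot see without distinguishing cases.
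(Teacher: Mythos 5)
Your proof is correct, but it takes a genuinely different route from the paper's. The paper argues intrinsically in $K$: it first observes that in a Hausdorff space at most one point can have the property that every neighborhood of it contains all but finitely many of the $s_n$ (two such points could not be separated), so every other term admits, by local compactness and regularity, an open neighborhood whose closure misses infinitely many terms of the sequence; the recursion then selects $n_k$ together with such a neighborhood $V_k$, keeps track of the infinite index sets $S_k$ of terms still avoiding all closures used so far, and sets $U_k = V_k \setminus \bigcup_{j<k} \overline{U_j}$. You instead pass to the one-point compactification $K^* = K \cup \{\infty\}$, extract a cluster point $q$ of $(s_n)$ by compactness, and anchor the recursion at $q$: since every closure $\overline{U_j}$ built so far avoids $q$, the complement $K^* \setminus (\overline{U_1} \cup \cdots \cup \overline{U_k})$ is automatically a neighborhood of $q$ and therefore contains infinitely many fresh terms (at most one of which can equal $q$, the $s_n$ being distinct), and regularity of the compact Hausdorff space $K^*$ separates the newly chosen $s_{n_{k+1}}$ from the closed set $\overline{U_1} \cup \cdots \cup \overline{U_k} \cup \{q,\infty\}$; keeping $\infty$ out of the closures makes each $U_k$ open in $K$. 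Your version buys uniformity: the cluster point replaces both the paper's exceptional-point dichotomy and its bookkeeping of the sets $S_k$, and the cases $q \in K$ and $q = \infty$ require no distinction. The paper's version buys economy of means: it never leaves $K$ and uses only local regularity. One cosmetic remark: your final step of shrinking $U_{k+1}$ inside a compact neighborhood is redundant, since $K^*$ is compact and every closed subset of it is already compact.
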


\begin{proof}
We construct the sequence $(n_k)$, and the open sets $U_k$, recursively.

Note first that for any sequence of distinct points $(t_i)_{i \in \NN}$ in a Hausdorff
space there is at most one natural number $m$ so that any neighborhood
of $t_m$ contains all but finitely many members of the sequence $(t_i)$.
Indeed, if there exist two numbers, say $m$ and $\ell$, with this property,
then $t_m$ and $t_\ell$ cannot be separated, which cannot happen in a Hausdorff topology.

Consequently, if $(t_i)_{i \in \NN}$ is a sequence of distinct points in a locally compact
 Hausdorff space, then for any $i \in I$ (where $I$ is either $\NN$ or
$\NN \backslash \{m\}$, for the $m$ corresponding to the sequence $(t_i)_{i \in \NN}$)
there exists an open neighborhood $V_i$ of $t_i$ so that
$\{j \in I : t_j \notin \overline{V_i}\}$ is infinite.

Let $S_0 = \NN$. Pick $n_0 \in S_0$ in such a way that $s_{n_0}$ has an open neighborhood
$U_0$ so that $S_1:=\{n \in S_0 : s_n \notin \overline{U_0}\}$ is infinite.

Now suppose we have already selected $n_0 < \ldots < n_{k-1}$, and disjoint open sets
$U_0, \ldots, U_{k-1}$, so that $s_{n_j} \in U_j$ for $0 \leq j \leq k-1$, and
$$
S_k = \{n \in \NN : s_n \notin \cup_{j=1}^{k-1} \overline{U_j}\}
$$
is infinite.
Find $n_k \in S_k$ with an open neighborhood $V_k$ so that
$$
S_{k+1}:=\{n \in S_k : s_n \notin \overline{V_k} \}
$$
is infinite. Note that the same property holds for $U_k=V_k\backslash\bigcup_{j=0}^{k-1} \overline{U_j}$.

Proceed further in the same manner to obtain a sequence with the desired properties.
\end{proof}

We now proceed to prove Theorem \ref{t:cont on CK}.
For the rest of this subsection, $K$ is locally compact Hausdorff,
unless specified otherwise.

Suppose $T : C_0(K,X) \to C_0(K,X)$ is an $\vr$-BP linear map. For $t \in K$, let
$$
\lambda_t = \|\delta_t T\| = \sup \big\{ \big\| [Tf](t) \big\|_X : \|f\| \leq 1 \big\}
 \in [0,\infty] .
$$
We want to show that $\sup_{t\in K} \lambda_t < \infty$.

\begin{lemma}\label{l:vectorval}
Suppose $X$ is a Banach lattice, and $T : C_0(K,X) \to C_0(K,X)$ is an $\vr$-BP linear map.
If $f\in C_0(K,X)$ vanishes on an open set $V\subset K$, then $\|[Tf](t)\|_X\leq\vr\|f\|$
for any $t \in V$.
\end{lemma}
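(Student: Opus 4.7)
The plan is to exploit the $\vr$-BP property of $T$ by constructing, for a given point $t \in V$, a positive function $G \in C_0(K,X)$ that is disjoint from $f$ and that, evaluated at $t$, equals $|[Tf](t)|$. Then applying the $\vr$-BP inequality to the pair $f, G$ and evaluating the resulting lattice inequality at the single point $t$ will yield the desired norm estimate.

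More precisely, fix $t \in V$ and set $h = [Tf](t) \in X$. Since $K$ is locally compact Hausdorff and $V$ is an open neighborhood of $t$, Urysohn's lemma (in its locally compact form) supplies a continuous function $\phi : K \to [0,1]$ with $\phi(t) = 1$ and $\mathrm{supp}\,\phi$ a compact subset of $V$. Define $G \in C_0(K,X)$ by $G(s) = \phi(s)\,|h|$. Then $G \geq 0$ pointwise, $G$ is continuous with compact support (hence lies in $C_0(K,X)$), and $G$ vanishes outside $V$. Since $f$ vanishes on $V$, we have $|G(s)| \wedge |f(s)| = 0$ for every $s \in K$, so $G \perp f$ in the lattice $C_0(K,X)$.

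Now I apply the definition of $\vr$-BP to $f$ and $G$: the inequality $\big\| |Tf| \wedge G \big\|_{C_0(K,X)} \leq \vr \|f\|$ holds. On the other hand, evaluating the function $|Tf| \wedge G$ at $t$ gives $|h| \wedge (\phi(t)\,|h|) = |h|$, so
$$
\big\| |Tf| \wedge G \big\|_{C_0(K,X)} \geq \big\| \big( |Tf| \wedge G \big)(t) \big\|_X = \| h \|_X = \|[Tf](t)\|_X.
$$
Combining the two inequalities yields $\|[Tf](t)\|_X \leq \vr \|f\|$, as required.

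There is no real obstacle here: the only potentially delicate point is confirming that the lattice operations and disjointness in $C_0(K,X)$ behave pointwise, and that the cut-off $\phi$ exists in the locally compact Hausdorff setting. Both are routine, so the proof is essentially a single application of the $\vr$-BP property to a cleverly chosen disjoint test function.
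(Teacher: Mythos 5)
Your proof is correct and follows essentially the same route as the paper: both construct the disjoint test function $\phi(\cdot)\,|[Tf](t)|$ from a Urysohn bump supported in $V$, apply the $\vr$-BP inequality to the pair $(f,G)$, and evaluate the pointwise lattice inequality at $t$. Your only (welcome) refinement is invoking the locally compact form of Urysohn's lemma to get compact support, which guarantees $G \in C_0(K,X)$ --- a detail the paper's proof leaves implicit.
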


\begin{proof}
By Urysohn's Lemma, there is a continuous function $h:K\rightarrow[0,1]$
such that $h(t)=1$ and $h(s)=0$ for $s\in V^c$. Let $\phi=|Tf(t)|\cdot h\in C_0(K,X)$.
We have that $\phi\perp f$ and since $T$ is $\vr$-BP, it follows that
$$
\|[Tf](t)\|_X=\||[Tf](t)|\wedge \phi(t)\|_X\leq \||Tf|\wedge \phi\|\leq\vr\|f\|.
\qedhere
$$
\end{proof}

\begin{lemma}\label{l:approx}
For any $t \in K$, any open neighborhood $U$ with $t\in U$, and any $\sigma > 0$,
there exists $f \in \ball(C_0(K,X))$ so that $f$ vanishes outside of $U$, and
$\|[Tf](t)\|_X > \lambda_t - \vr - \sigma$.
\end{lemma}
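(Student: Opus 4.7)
The plan is to obtain the required $f$ by starting from a near-optimal element for the supremum defining $\lambda_t$, then cutting it off by a continuous $[0,1]$-valued bump supported in $U$ and identically $1$ near $t$, and using Lemma \ref{l:vectorval} to absorb the error introduced by the cut-off.

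First, by the definition of $\lambda_t$, I would choose $g \in \ball(C_0(K,X))$ with $\|[Tg](t)\|_X > \lambda_t - \sigma$ (this is where $g$ may not yet vanish outside $U$). Next, since $K$ is locally compact Hausdorff and $t \in U$, I would invoke the locally compact version of Urysohn's Lemma to produce a function $h \in C_0(K)$ with $0 \leq h \leq 1$, $\supp h \subset U$, and $h \equiv 1$ on some open neighborhood $V \subset U$ of $t$. Then I set $f := h\cdot g$ and $f' := (1-h)\cdot g$, so that $g = f + f'$, both $\|f\|, \|f'\| \leq \|g\| \leq 1$, and $f$ vanishes outside $U$ while $f'$ vanishes on $V$.

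The conclusion then follows by applying Lemma \ref{l:vectorval} to $f'$: since $f'$ vanishes on the open neighborhood $V$ of $t$, we have $\|[Tf'](t)\|_X \leq \vr \|f'\| \leq \vr$. The triangle inequality then yields
$$
\|[Tf](t)\|_X \geq \|[Tg](t)\|_X - \|[Tf'](t)\|_X > (\lambda_t - \sigma) - \vr,
$$
which is the desired estimate.

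I do not anticipate a serious obstacle here; the only point requiring care is the construction of the bump $h$, which uses that $K$ is locally compact Hausdorff (so $t$ admits a relatively compact open neighborhood whose closure sits inside $U$, and Urysohn separates this neighborhood from the complement of $U$). Once the bump is available, the decomposition $g = hg + (1-h)g$ and Lemma \ref{l:vectorval} do all the work automatically.
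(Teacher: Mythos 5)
Your proof is correct and follows essentially the same route as the paper's: both pick a near-optimal $g$, build a Urysohn bump $h$ that is $1$ on a neighborhood $V$ of $t$ and supported in $U$, split $g = hg + (1-h)g$, and apply Lemma \ref{l:vectorval} to the piece $(1-h)g$ vanishing on $V$, finishing with the triangle inequality. No gaps; the construction of $h$ via the locally compact form of Urysohn's Lemma is exactly what the paper does as well.
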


\begin{proof}
Pick $g \in \ball(C_0(K,X))$ so that $\|[Tg](t)\|_X > \lambda_t - \sigma$.
Find an open set $V$ so that $\overline{V}$ is compact,
and $t \in V \subset \overline{V} \subset U$.
Urysohn's Lemma allows us to find a function $h$ so that $0 \leq h \leq 1$,
$h|_{\overline V} = 1$, and $h|_{U^c} = 0$. Let $f = hg$, and $f^\prime = (\one - h)g$.
Since $f'|_V=0$, Lemma \ref{l:vectorval} gives
$\big\| [Tf^\prime](t) \big\|_X \leq \vr\|f^\prime\|\leq \vr$.
By the triangle inequality,
$$
\big\| [Tf](t) \big\|_X \geq \big\| [Tg](t) \big\|_X - \big\| [Tf^\prime](t) \big\|_X >
 \lambda_t - \sigma - \vr .
\qedhere
$$
\end{proof}

\begin{lemma}\label{l:sequence}
If $(t_k)$ is a sequence of distinct points in $K$, then $\limsup_k \lambda_{t_k} < \infty$.
\end{lemma}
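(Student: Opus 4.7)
The plan is to argue by contradiction, producing a single $g \in C_0(K,X)$ whose image $Tg$ has unbounded norm along a subsequence of $(t_k)$, which violates $Tg \in C_0(K,X)$. Assuming $\limsup_k \lambda_{t_k} = \infty$, I can pass to a subsequence with $\lambda_{t_k} \geq 4^k$. Then I would apply Lemma \ref{l:disj seq} to extract a further subsequence $(t_{n_k})$ together with pairwise disjoint open sets $U_k \ni t_{n_k}$, and use Lemma \ref{l:approx} (with $\sigma=1$) to choose, for each $k$, a function $f_k \in \ball(C_0(K,X))$ that vanishes outside $U_k$ and satisfies $\|[Tf_k](t_{n_k})\|_X > \lambda_{t_{n_k}} - \vr - 1$.

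Next I would set $g = \sum_{k=1}^\infty 2^{-k} f_k$. Disjointness of the $U_k$ forces at most one summand to be non-zero at each point, so $g$ is defined pointwise with $\|g(t)\|_X \leq 2^{-k}$ for $t \in U_k$ and $g(t) = 0$ elsewhere; in particular $\|g\| \leq 1/2$. The critical check is that $g$ lies in $C_0(K,X)$. Continuity inside each $U_{k_0}$ is clear since $g$ agrees with $2^{-k_0} f_{k_0}$ throughout $U_{k_0}$. At a point $t_0 \notin \bigcup_k U_k$ one has $g(t_0) = 0$ and $f_k(t_0) = 0$ for every $k$, so given $\delta > 0$ one chooses $N$ with $2^{-k} < \delta$ for $k \geq N$ and uses the continuity of the finitely many $f_k$ with $k < N$ to produce a neighborhood on which $\|g\|_X < \delta$. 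Likewise, $\{t : \|g(t)\|_X \geq \delta\}$ is a finite union of sets of the form $\{t \in U_k : \|f_k(t)\|_X \geq 2^k \delta\}$, each compact because $f_k \in C_0(K,X)$.

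With $g \in C_0(K,X)$ in hand, the $\vr$-BP assumption completes the argument. For each $k$, the function $g - 2^{-k} f_k$ vanishes on the open neighborhood $U_k$ of $t_{n_k}$, so Lemma \ref{l:vectorval} yields $\|[T(g - 2^{-k} f_k)](t_{n_k})\|_X \leq \vr \|g - 2^{-k} f_k\| \leq \vr/2$, and hence
$$
\|[Tg](t_{n_k})\|_X \geq 2^{-k}\|[Tf_k](t_{n_k})\|_X - \vr/2 > 2^{-k}(\lambda_{t_{n_k}} - \vr - 1) - \vr/2 \geq 2^k - 2^{-k}(\vr+1) - \vr/2,
$$
which tends to $\infty$. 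This contradicts $\sup_t \|[Tg](t)\|_X < \infty$. The main obstacle in the plan is precisely the verification that $g \in C_0(K,X)$: the disjointness of $(U_k)$ alone gives only a well-defined pointwise sum, and continuity at boundary points of $\bigcup_k U_k$ rests on the interplay between the decay $2^{-k} \to 0$ and the continuity (together with vanishing at $t_0$) of each individual $f_k$.
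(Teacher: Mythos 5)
Your proof is correct and follows essentially the same route as the paper: contradiction, a subsequence with $\lambda_{t_k}\geq 4^k$, disjoint open sets from Lemma \ref{l:disj seq}, functions $f_k$ from Lemma \ref{l:approx}, the sum $\sum_k 2^{-k}f_k$, and Lemma \ref{l:vectorval} to contradict the boundedness of the image. The only difference is your careful pointwise verification that $g\in C_0(K,X)$, which (while valid) is unnecessary, since $\sum_k 2^{-k}\|f_k\|<\infty$ makes the series absolutely convergent in the Banach space $C_0(K,X)$ --- this is why the paper treats the membership as clear.
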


\begin{proof}
Suppose otherwise. Passing to a subsequence, we can assume that
$\lambda_{t_k} > 4^k + \vr$ for any $k$. Applying Lemma \ref{l:disj seq}, and
passing to a further subsequence if necessary, we can assume that
there exist disjoint open sets $U_k$ such that $t_k \in U_k$ for every $k$.
By Lemma \ref{l:approx}, we can find $f_k \in \ball(C_0(K,X))$,
vanishing outside of $U_k$, so that $\|[Tf_k](t_k)\|_X > 4^k$.

Now let $f = \sum_k 2^{-k} f_k$. Clearly $f \in C_0(K,X)$ (with $\|f\| \leq 2$),
and for every $n$,
$$
\sum_{k \neq n} 2^{-k} f_k|_{U_n}=0.
$$
Hence, by Lemma \ref{l:vectorval} we have
$$
\big\| [Tf](t_n) \big\|_X \geq 2^{-n} \big\| [Tf_n](t_n) \big\|_X -
 \vr \big\| \sum_{k \neq n} 2^{-k} f_k \big\| > 2^n - 2 \vr ,
$$
which contradicts the fact that $Tf\in C_0(K,X)$.
\end{proof}

\begin{lemma}\label{l:converge}
If $t_n \to t$, then $\lambda_t \leq \limsup \lambda_{t_n}$.
\end{lemma}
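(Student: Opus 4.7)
The key observation is that although $T$ is not yet known to be continuous as an operator, the output $Tf$ of $T$ at any fixed $f$ lies in $C_0(K,X)$ by hypothesis, hence is a continuous function on $K$ with values in $X$. This is enough to force a lower semicontinuity property on $t \mapsto \lambda_t$.

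The plan is as follows. Fix an arbitrary $f \in \ball(C_0(K,X))$. Since $Tf \in C_0(K,X)$ is continuous and $t_n \to t$, the norm continuity of $Tf$ gives
$$
\big\| [Tf](t) \big\|_X = \lim_n \big\| [Tf](t_n) \big\|_X .
$$
For each $n$, by the very definition of $\lambda_{t_n}$, we have $\|[Tf](t_n)\|_X \leq \lambda_{t_n}$. Therefore
$$
\big\| [Tf](t) \big\|_X \leq \liminf_n \lambda_{t_n} \leq \limsup_n \lambda_{t_n} .
$$
Taking the supremum over all $f \in \ball(C_0(K,X))$ on the left-hand side then yields $\lambda_t \leq \limsup_n \lambda_{t_n}$, which is the conclusion.

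There is no real obstacle here: the argument is just the standard fact that a pointwise supremum of continuous functions is lower semicontinuous, applied to the family $\{t \mapsto \|[Tf](t)\|_X : f \in \ball(C_0(K,X))\}$. The statement remains valid in the case $\limsup_n \lambda_{t_n} = \infty$, where the inequality is trivial; and when $\limsup_n \lambda_{t_n} < \infty$, the above argument furnishes the (finite) bound on $\lambda_t$. Note that we do not need to invoke the $\vr$-BP hypothesis in this step — it has already been used to produce the setup in the earlier lemmas — since the lemma only records a topological property of the functional $t \mapsto \lambda_t$.
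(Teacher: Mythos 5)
Your proof is correct and rests on the same key fact as the paper's: since $Tf \in C_0(K,X)$ is norm-continuous, $\|[Tf](t)\|_X = \lim_n \|[Tf](t_n)\|_X \leq \limsup_n \lambda_{t_n}$ for every $f \in \ball(C_0(K,X))$, and one takes the supremum over $f$. The paper packages this as a proof by contradiction (choosing $c$ with $\lambda_t > c > \lambda_{t_n}$ and an $f$ with $\|[Tf](t)\|_X > c$), whereas you argue directly and in fact obtain the slightly stronger conclusion $\lambda_t \leq \liminf_n \lambda_{t_n}$; this is only a cosmetic difference, not a different method.
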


\begin{proof}
Suppose, for the sake of contradiction, that there exists a sequence $(t_n)$ converging
to $t$, and $\lambda_t > c > \sup_n \lambda_{t_n}$. Pick $f \in \ball(C_0(K,X))$
so that $\|[Tf](t)\|_X > c$. On the other hand,
$$
\|[Tf](t)\|_X = \lim_n \|[Tf](t_n)\|_X \leq c,
$$
a contradiction.
\end{proof}

\begin{theorem}\label{t:noisolated}
Let $K$ be a locally compact Hausdorff space without isolated points,
and $X$ a Banach lattice. If $T:C_0(K,X)\rightarrow C_0(K,X)$ is a linear $\vr$-BP mapping,
then $T$ is bounded.
\end{theorem}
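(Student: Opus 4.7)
The plan is to deduce that $\sup_{t\in K}\lambda_t < \infty$, which immediately gives $\|Tf\|=\sup_t \|[Tf](t)\|_X \leq \sup_t\lambda_t$ for $f\in \ball(C_0(K,X))$ and hence boundedness of $T$. The two main inputs are Lemma \ref{l:sequence} (any sequence of \emph{distinct} points has finite $\limsup$ of $\lambda_{t_k}$) and Lemma \ref{l:converge} (lower semicontinuity $\lambda_t \leq \limsup_n \lambda_{t_n}$ whenever $t_n\to t$). The hypothesis that $K$ has no isolated points enters only to guarantee a sequence of distinct points approaching any prescribed $t$.

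First I would show that $\lambda_t<\infty$ for every $t\in K$. Fix $t\in K$. Since $t$ is not isolated, choose $t_n\to t$ with $t_n\neq t$, and after passing to a subsequence we may assume the $t_n$ are pairwise distinct. By Lemma \ref{l:sequence}, $\limsup_n \lambda_{t_n}<\infty$, and then Lemma \ref{l:converge} forces $\lambda_t\leq \limsup_n \lambda_{t_n}<\infty$.

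Next, suppose for contradiction that $\sup_{t\in K}\lambda_t=\infty$. For each $M>0$ let $S_M=\{t\in K:\lambda_t>M\}$. Since every single $\lambda_t$ is finite, if some $S_M$ were finite then $\sup_{t\in K}\lambda_t\leq \max\{M,\max_{t\in S_M}\lambda_t\}<\infty$, a contradiction. Hence each $S_M$ is infinite, and we may pick distinct points $t_k\in S_k\setminus\{t_1,\ldots,t_{k-1}\}$. Then $(t_k)$ is a sequence of distinct points with $\lambda_{t_k}\to\infty$, contradicting Lemma \ref{l:sequence}. Therefore $\sup_{t\in K}\lambda_t<\infty$ and $T$ is bounded.

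There is no serious obstacle here; the work was carried out in the preparatory lemmas. The one delicate point is the extraction of a sequence of \emph{distinct} points — both in verifying finiteness of each $\lambda_t$ (where the absence of isolated points is essential to produce a non-eventually-constant approximating sequence) and in the final contradiction (where the finiteness of individual $\lambda_t$'s is what lets us avoid collapsing to a single point).
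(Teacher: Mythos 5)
Your proof is correct and is essentially the paper's own argument: both steps (finiteness of each $\lambda_t$ via Lemmas \ref{l:sequence} and \ref{l:converge} using a sequence of distinct points converging to $t$, then extracting a sequence of distinct points with $\lambda_{t_k}\to\infty$ to contradict Lemma \ref{l:sequence}) match the paper's proof, with your $S_M$ argument merely making explicit the extraction the paper leaves implicit.
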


\begin{proof}
As noted above, we need to show that $\sup_{t \in K} \lambda_t < \infty$.
Since $K$ has no isolated points, for every $t\in K$ there is a sequence $(t_k)$
of distinct points in $K$, such that $t_k\rightarrow t$. By Lemmas \ref{l:sequence} and
\ref{l:converge}, it follows that $\lambda_t$ is finite for every $t\in K$.

Suppose $\sup_{t \in K} \lambda_t = \infty$, then it would be possible to find
a sequence of distinct points $(t_n)$, so that $\lambda_{t_n}$ increases without a bound,
which is impossible by Lemma \ref{l:sequence}.
\end{proof}

\begin{proof}[Proof of Theorem \ref{t:cont on CK}]
We will prove first that for every $t \in K$, $\lambda_t$
is finite. Suppose, for the sake of contradiction, that $\lambda_t = \infty$
for some $t \in K$. By Lemmas \ref{l:sequence} and \ref{l:converge},
$t$ must be an isolated point in $K$. Hence, we can consider the function
$\chi_{\{t\}}\in C_0(K)$ as well as the operators $j_t:X\rightarrow C_0(K,X)$
and $\delta_t:C_0(K,X)\rightarrow X$ given by $j_t(x)=x\chi_{\{t\}}$
for $x\in X$, and $\delta_t(f)=f(t)$ for $f\in C_0(K,X)$ respectively.

Let $T_t=\delta_t T j_t$. It is clear that $T_t:X\rightarrow X$ is a linear mapping,
and we claim it is $\vr$-BP. Indeed, given $x,y\in X$ with $x\perp y$ and $y\geq0$,
we have that $\chi_{\{t\}}x\perp\chi_{\{t\}}y$ in $C_0(K,X)$, so as $T$ is $\vr$-BP
it follows that
$$
\||T(\chi_{\{t\}}x)|\wedge(\chi_{\{t\}}y)\|\leq \vr\|\chi_{\{t\}}x\|=\vr\|x\|.
$$
Therefore,
$$
\||T_t x|\wedge y\|=
\Big\|\delta_t\Big(|T(\chi_{\{t\}}x)|\wedge(\chi_{\{t\}}y)\Big)\Big\|\leq \vr\|x\|,
$$
so $T_t$ is $\vr$-BP as claimed. Proposition \ref{p:auto} yields that $T_t$ is bounded.

Since $t$ is isolated, any $f \in C_0(K,X)$ can be represented
as $f = f(t) \chi_{\{t\}} + f^\prime$, where $f^\prime$ vanishes at $t$
(equivalently, on a neighborhood of $t$), and moreover, $\|f^\prime\| \leq \|f\|$.
If $f \in \ball(C_0(K,X))$, then by Lemma \ref{l:vectorval}, we have
$$
\big\| [Tf](t) \big\|_X \leq \big\|  [T f(t)\chi_{\{t\}}](t) \big\|_X + \vr=\|T_t f\|+\vr .
$$
Taking the supremum over all $f$ as above, we obtain
$\lambda_t \leq \|T_t\| + \vr < \infty$, a contradiction.

Suppose, for the sake of contradiction, that $T$ is unbounded --
that is, $\sup_{t \in K} \lambda_t = \infty$.
By the above, there must exist a sequence $(t_k)_{k \in \NN} \subset K$
so that $\lim_k \lambda_{t_k} = \infty$. This, however, contradicts
Lemma \ref{l:sequence}.
\end{proof}

\section{Some notions related to $\vr$-band preservation}\label{s:center}

In this section, we consider some properties related to
(and perhaps strengthening) band preservation.

\begin{definition}
An operator on a Banach lattice $T: X\to X$ is $\vr$-approximable by BP maps
(in short $T\in ABP(\vr)$) when there is a BP operator $S$ such that
$\|T-S\|\leq\vr$.
\end{definition}

Clearly every $T\in ABP(\vr)$ is bounded, and $\vr$-BP.
In Section \ref{s:stability} we will study under which conditions every $\vr$-BP operator is in $ABP(\vr)$.

Recall (Theorem \ref{t:BP=>IP}) that $T \in B(X)$ is $\vr$-BP if and only if
for every $x \in \ball(X)$ and $\vr^\prime > \vr$ there exists
$\lambda = \lambda_x > 0$ such that $\| (|Tx| - \lambda|x|)_+\| < \vr'$.
However, in principle, we have no control over $\sup_{x \in \ball(X)} \lambda_x$.
Strengthening this properties, we introduce:

\begin{definition}
An operator $T:X\rightarrow X$ is in the $\vr$-center
(in short $T\in\vr-\mathcal{Z}(X)$) if there exists $\lambda>0$ such that for every
$x\in\ball(X)$, there is $z\in X$ with $\|z\|\leq\vr$ such that $$|Tx|\leq\lambda|x|+z.$$
\end{definition}

Note that $T\in 0-\mathcal{Z}(X)=\mathcal{Z}(X)$ if and only if $T$ is BP (\cite{AVK}). Moreover, if $T$ is BP, and $S$ is arbitrary, then $T + S \in \|S\|-\mathcal{Z}(X)$. In general, if $T \in \vr-\mathcal{Z}(X)$, then $T$ is $\vr$-BP. We do not know whether the converse implication holds in general. However, if $T\in ABP(\vr)$, then $T\in\vr-\mathcal Z(X)$. In Section \ref{s:stability} we will provide conditions for which every $\vr$-BP operator is in $ABP(4\vr)$, hence it also belongs to $4\vr-\mathcal Z(X)$.

Note that $T\in \vr-\mathcal Z(X)$ if and only if there is $\lambda \geq0$ such that for every $x\in\ball(X)$,
$$
\|(|Tx|-\lambda|x|)_+\|\leq\vr.
$$

For $T\in\vr-\mathcal Z(X)$, we define
$$
\rho_\vr(T)=\inf\{\lambda\geq0: \sup_{x \in \ball(X)} \|(|Tx|-\lambda|x|)_+\|\leq\vr\}.
$$

\begin{proposition}\label{p:duality center}
Let $X$ be a Banach lattice and $\vr\geq0$. Given $T \in B(X)$, we have
$T\in \vr-\mathcal Z(X)$ if and only if $T^*\in\vr-\mathcal Z(X^*)$.
Moreover, $\rho_\vr(T)=\rho_\vr(T^*)$.
\end{proposition}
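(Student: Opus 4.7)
The plan is to reformulate: $T\in\vr$-$\mathcal Z(X)$ with constant $\lambda$ is equivalent, by homogeneity, to $\|(|Tx|-\lambda|x|)_+\|\leq\vr\|x\|$ for all $x\in X$, and likewise for $T^*$. It therefore suffices to prove, for each fixed $\lambda\geq 0$, the equivalence of these norm bounds for $T$ and $T^*$, since the equality $\rho_\vr(T)=\rho_\vr(T^*)$ is then immediate by taking infima.

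For the implication $T\Rightarrow T^*$, I would first record the duality identity
\[
\|w^*_+\|=\sup\{\langle w^*,y\rangle: y\in \ball(X)_+\},\qquad w^*\in X^*,
\]
which follows from the Riesz--Kantorovich formula $\langle w^*_+,y\rangle=\sup\{\langle w^*,f\rangle: 0\leq f\leq y\}$ together with the fact that the norm of the positive functional $w^*_+$ equals $\sup_{y\in\ball(X)_+}\langle w^*_+,y\rangle$. Applied to $w^*=|T^*x^*|-\lambda|x^*|$, the goal is reduced to showing that for every $y\in\ball(X)_+$ and $x^*\in\ball(X^*)$,
\[
\langle|T^*x^*|,y\rangle-\lambda\langle|x^*|,y\rangle\leq\vr.
\]
By Lemma 1.4.4 of \cite{M-N}, $\langle|T^*x^*|,y\rangle=\sup\{|\langle x^*,Tf\rangle|:f\in X,\;|f|\leq y\}$, and for each such $f$ one has
\[
|\langle x^*,Tf\rangle|-\lambda\langle|x^*|,y\rangle
\leq \langle|x^*|,|Tf|-\lambda|f|\rangle
\leq \langle|x^*|,(|Tf|-\lambda|f|)_+\rangle
\leq \|x^*\|\,\vr\|f\|\leq\vr,
\]
using $\|f\|\leq\|y\|\leq 1$, the fact that $|x^*|\geq 0$, and the hypothesis on $T$. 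Taking the supremum over $f$ and then over $y$ gives the claim, whence $\rho_\vr(T^*)\leq\rho_\vr(T)$.

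For the converse, apply the forward implication just established to $T^*\in B(X^*)$, obtaining $T^{**}\in\vr$-$\mathcal Z(X^{**})$ with the same constant. Since the canonical embedding $X\hookrightarrow X^{**}$ is an isometric lattice homomorphism and restricts $T^{**}$ to $T$, the $\vr$-$\mathcal Z$ inequality for $T^{**}$ specialised to vectors in the image of $\ball(X)$ is precisely the defining condition for $T\in\vr$-$\mathcal Z(X)$ with the same $\lambda$. Thus $\rho_\vr(T)\leq\rho_\vr(T^*)$, completing the proof. The main delicate step is the careful use of the Riesz--Kantorovich-type identity to write $\langle|T^*x^*|,y\rangle$ as a supremum over elements of $X$ with $|f|\leq y$; once this is in place, the remainder is a direct chain of lattice estimates together with the standard fact that $X$ embeds as a sublattice of $X^{**}$.
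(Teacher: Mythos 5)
Your proof is correct and follows essentially the same route as the paper's: both directions rest on the Riesz--Kantorovich duality formulas (bounding $\langle |x^*|, |Tf|-\lambda|f|\rangle$ via $\|(|Tf|-\lambda|f|)_+\|$ after writing $\langle |T^*x^*|, y\rangle$ as a supremum over $|f|\leq y$), followed by bidualization and the identification $T^{**}|_X = T$ for the converse. The only difference is cosmetic: you make the norm identity $\|w^*_+\|=\sup\{\langle w^*,y\rangle : y\in\ball(X)_+\}$ explicit at the outset, where the paper instead evaluates $\bigl(|T^*x^*|-\lambda|x^*|\bigr)_+$ against each $x\in\ball(X)_+$ directly.
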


\begin{proof}
Suppose $T\in \vr-\mathcal Z(X)$ and take $\lambda>\rho_\vr(T)$.
By the Riesz-Kantorovich formulas (cf. \cite[Theorem 1.18, and p. 58]{A-B}),
given $x\in\ball(X)_+$ and $x^*\in\ball(X^*)$ we have
\begin{align*}
\Big\langle \Big(|T^*x^*|-\lambda|x^*|\Big)_+,x\Big\rangle
&
=\sup_{0\leq y\leq x}\langle|T^*x^*|-\lambda|x^*|,y\rangle
\\
&
=\sup_{0\leq y\leq x}\Big(\sup_{|z|\leq y}\Big|\langle T^*x^*,z\rangle\Big|-\lambda\langle|x^*|,y\rangle\Big)
\\
&
=\sup_{0\leq y\leq x}\sup_{|z|\leq y}\Big(\Big|\langle x^*,Tz\rangle\Big|-\lambda\langle|x^*|,y\rangle\Big)
\\
&
\leq\sup_{0\leq y\leq x}\sup_{|z|\leq y}\Big(\langle |x^*|,|Tz|-\lambda|z|\rangle\Big)
\\
&
\leq\sup_{0\leq y\leq x}\sup_{|z|\leq y}\|x^*\|\Big\|\Big(|Tz|-\lambda|z|\Big)_+\Big\|\leq\vr.
\end{align*}
Therefore, $T^*\in\vr-\mathcal Z(X^*)$ and $\rho_\vr(T^*)\leq\rho_\vr(T)$.

Now, suppose $T^*\in \vr-\mathcal Z(X^*)$. Applying the above argument to $T^*$ we obtain that $T^{**}\in \vr-\mathcal Z(X^{**})$ with $\rho_\vr(T^{**})\leq\rho_\vr(T^*)$. Since $T^{**}|_X=T$, this implies that $T\in \vr-\mathcal Z(X)$ and
$$
\rho_\vr(T)\leq\rho_\vr(T^{**})\leq\rho_\vr(T^*)\leq\rho_\vr(T).
\qedhere
$$
\end{proof}

\begin{definition}
An operator on a Banach lattice $T: X\to X$ is locally $\vr$-approximable by BP maps (in short $T\in ABP_{loc}(\vr)$) provided for every $x\in X$, there is a BP operator $S_x$ such that
$$
\|Tx-S_x x\|\leq\vr\|x\|.
$$
\end{definition}

It is clear that every operator $T\in ABP_{loc}(\vr)$ is $\vr$-BP. Moreover, if the local approximants $S_x$ can be taken in such a way that $\sup_x\|S_x\|<\infty$, then $T\in\vr-\mathcal Z(X)$, with $\rho_\vr(T)\leq\sup_x\|S_x\|$. The following provides a converse:

\begin{theorem}\label{t:loc appr}
Suppose $E$ is a Banach lattice with a quasi-interior point, $\vr > 0$, and $T \in B(E)$.
\begin{enumerate}
 \item
$T$ is $\vr$-BP if and only if $T \in ABP_{loc}(\vr')$ for every $\vr^\prime > \vr$.
\item
$T \in \vr - {\mathcal{Z}}(E)$ with $\rho_\vr(T) < C$ if and only if for every
$x \in \ball(E)$ and every $\vr^\prime > \vr$ there exists a BP map $T_x \in B(E)$
so that $\|T_x\| < C$, and $\|Tx - T_x x\| < \vr^\prime$.
\end{enumerate}
\end{theorem}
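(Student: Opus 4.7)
For the easy direction in both parts, observe that if $T_x\in B(E)$ is BP and $y\geq 0$ is disjoint from $x$, then $T_xx$ lies in the band generated by $x$, so $|T_xx|\wedge y=0$ and hence $|Tx|\wedge y\leq |Tx-T_xx|$; moreover $|T_xx|\leq\|T_x\||x|$ yields $(|Tx|-\|T_x\||x|)_+\leq|Tx-T_xx|$. In (1), letting $\vr'\downarrow\vr$ proves $T$ is $\vr$-BP; in (2), the hypothesis $\|T_x\|<C$ gives $\|(|Tx|-C|x|)_+\|\leq\vr'$, whence $\rho_\vr(T)\leq C$.

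For the nontrivial direction, fix $x\in\ball(E)$ and $\vr'>\vr$. In (1), choose $\vr''\in(\vr,\vr')$ and apply Theorem \ref{t:BP=>IP}(4) to obtain $\lambda>0$ and $z\in E_+$ with $\|z\|\leq\vr''$ and $|Tx|\leq\lambda|x|+z$. In (2), pick $\lambda\in(\rho_\vr(T),C)$ and set $z=(|Tx|-\lambda|x|)_+$, so that $\|z\|\leq\vr$ by the definition of $\rho_\vr$. Let $e$ be a quasi-interior point of $E$ and put $u=|x|\vee|Tx|\vee z\vee e$, which is again quasi-interior. By Kakutani's representation theorem there is a lattice isomorphism $j:C(K)\to E_u$ with $j(\one)=u$, identifying the principal ideal $E_u$ (endowed with its $u$-gauge norm) with $C(K)$ for some compact Hausdorff $K$. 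Under this identification $x$, $Tx$, $z$ become continuous functions on $K$ with $|Tx(t)|\leq\lambda|x(t)|+z(t)$ pointwise.

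Choose a small $\delta>0$, to be fixed later. On the closed set $\Omega_1=\{t\in K:|x(t)|\geq\delta\}$ the quotient $\phi_0=Tx/x$ is continuous, and its truncation $\tilde\phi_0=(-\lambda)\vee\phi_0\wedge\lambda$ extends by Tietze's theorem to $\phi\in C(K)$ with $\|\phi\|_\infty\leq\lambda$. Multiplication by $\phi$ on $E_u$ satisfies $|\phi y|\leq\lambda|y|$, hence extends by density (thanks to $u$ being quasi-interior) to an operator $T_x\in B(E)$ with $|T_xy|\leq\lambda|y|$ for all $y\in E$. In particular $T_x$ lies in the center of $E$, is BP, and has $\|T_x\|\leq\lambda$ (in case (2), strictly below $C$). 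A pointwise inspection in $C(K)$ gives $|T_xx-Tx|\leq z+2\lambda\delta\chi_{\Omega_1^c}$: on $\Omega_1$, either $\phi x=Tx$ or $|\phi x-Tx|=(|Tx|-\lambda|x|)_+\leq z$, while on $\Omega_1^c$ both $|\phi x|$ and $|Tx|-z$ are bounded by $\lambda\delta$. Transferring through $j$ yields $\|T_xx-Tx\|_E\leq\|z\|_E+2\lambda\delta\|u\|_E$, which is $<\vr'$ once $\delta$ is chosen small enough. The principal difficulty lies in reconciling the $C(K)$-representation of $E_u$, where the truncation and extension occur, with the ambient norm of $E$; the bound $\|j(f)\|_E\leq\|f\|_\infty\|u\|_E$ together with the quasi-interior assumption ensure that multiplication by $\phi$ indeed extends from $E_u$ to a bona fide BP operator on the whole space.
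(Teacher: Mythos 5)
Your proof is correct, and for the hard direction it takes a genuinely more self-contained route than the paper's. The paper also starts from Theorem \ref{t:BP=>IP}(4) to get $|Tx|\leq\lambda|x|+z$ with $\|z\|$ small, but then proceeds in two black-box steps: Lemma \ref{l:decomp} produces $y$ with $|y|\leq\lambda|x|$ and $\|y-Tx\|\leq\|z\|$, and the cited Lemma 4.17 of Abramovich--Aliprantis produces a BP operator $T_x$ with $|T_xw|\leq\lambda|w|$ for all $w$ and $T_xx$ close to $y$. You instead unpack both steps into one explicit construction: represent the principal ideal generated by $u=|x|\vee|Tx|\vee z\vee e$ as $C(K)$, truncate $Tx/x$ on the closed set $\{|x|\geq\delta\}$ to $[-\lambda,\lambda]$, extend by Tietze, and extend the resulting multiplier from the dense ideal $E_u$ (dense precisely because $u\geq e$ is quasi-interior) to all of $E$; your pointwise estimate $|T_xx-Tx|\leq z+2\lambda\delta\one$, transferred via $|j(f)|\leq\|f\|_\infty u$, closes the argument, and the closedness of the positive cone guarantees that $|T_xy|\leq\lambda|y|$ survives the density extension, so $T_x$ is central and hence BP by the Abramovich--Veksler--Koldunov characterization recalled in the introduction. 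What the paper's route buys is brevity by citation; what yours buys is self-containment plus a single argument covering both parts uniformly, since the explicit bound $\|T_x\|\leq\lambda$ with $\lambda\in(\rho_\vr(T),C)$ is exactly the norm control that (2) requires, whereas the paper merely remarks that ``(2) is handled similarly.'' One small point: in the converse of (2) you honestly conclude only $\rho_\vr(T)\leq C$ rather than $\rho_\vr(T)<C$; this is a boundary imprecision in the theorem's statement rather than a gap in your proof --- for instance, the diagonal operator on $c_0$ with entries $C(1-1/n)$ admits, for each $x$ and each $\vr'>0$, BP approximants of norm strictly below $C$ (truncated diagonals), yet $\rho_0(T)=C$.
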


Before the proof we need a decomposition result.

\begin{lemma}\label{l:decomp}
Suppose $x$, $y$, and $z$ are elements of a Banach lattice $E$, so that
$|y| \leq |x| + z$. Then there exists $u \in E$ so that $\|y-u\| \leq \|z\|$,
and $|u| \leq |x|$.
\end{lemma}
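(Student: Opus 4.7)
The plan is to apply the Riesz decomposition property of Banach lattices. First, I would reduce to the case $z \geq 0$: since $z \leq z_+$, the hypothesis gives $|y| \leq |x| + z_+$, while $\|z_+\| \leq \|z\|$, so one may replace $z$ by $z_+$ and assume $z \geq 0$ from the outset.

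Once $z \geq 0$, the statement is equivalent to decomposing $y$ as $y = u + v$ with $|u| \leq |x|$ and $|v| \leq z$. I would produce $u$ explicitly as the truncation of $y$ to the order interval $[-|x|, |x|]$, namely
\[
u = (y \wedge |x|) \vee (-|x|), \qquad v = y - u.
\]
By construction $-|x| \leq u \leq |x|$, so $|u| \leq |x|$.

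The main calculation is to identify $|v|$ with $(|y|-|x|)_+$. Writing $y = y_+ - y_-$ and using $y_+ \wedge y_- = 0$, one obtains $u = y_+ \wedge |x| - y_- \wedge |x|$, whose positive and negative parts are disjoint. Consequently
\[
v = (y_+ - y_+ \wedge |x|) - (y_- - y_- \wedge |x|) = (y_+ - |x|)_+ - (y_- - |x|)_+,
\]
again a disjoint decomposition, so $|v| = (y_+ - |x|)_+ + (y_- - |x|)_+ = (|y|-|x|)_+$. The hypothesis $|y| \leq |x| + z$ then gives $|v| \leq z$, and the lattice property of the norm yields $\|y-u\| = \|v\| \leq \|z\|$.

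The only subtle point is the lattice identity $|v| = (|y|-|x|)_+$; this is routine but benefits from reducing to pointwise reasoning via the functional representation of the sublattice generated by $x$, $y$, $z$. Alternatively, one can bypass the explicit construction entirely by quoting the Riesz decomposition property, which directly asserts that $|y| \leq |x| + z$ with $|x|, z \geq 0$ implies the existence of $u, v$ with $y = u+v$, $|u| \leq |x|$, $|v| \leq z$.
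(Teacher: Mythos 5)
Your proof is correct, and it reaches the paper's conclusion by a genuinely more self-contained route, even though it manufactures the very same element $u$. The paper's (sketched) proof first establishes the norm estimate $\bigl\| |y| - |y|\wedge|x| \bigr\| \le \|z\|$, and then must convert the positive element $|y|\wedge|x|$ back into a vector $u$ with $|u| = |y|\wedge|x|$ and $\|u-y\| = \bigl\| |u|-|y| \bigr\|$; for this it identifies the ideal generated by $|y|$ with a $C(K)$-space, writes $y = |y|\,w$ with $|w|=1$, and sets $u = (|y|\wedge|x|)\,w$, i.e.\ it restores the sign of $y$ pointwise via a representation theorem. You avoid the representation step altogether: your truncation $u = (y\wedge|x|)\vee(-|x|) = y_+\wedge|x| - y_-\wedge|x|$ is in fact the same element as the paper's $u$, and every step of your verification --- the identity $a - a\wedge b = (a-b)_+$, the disjointness of the positive and negative pieces, and the resulting formula $|y-u| = (|y|-|x|)_+$ --- is a lattice identity in finitely many variables, hence valid in every vector lattice because it holds in $\RR$ (equivalently, one may check it inside the ideal generated by $|x|+|y|$, which is an AM-space representable as a $C(K)$ after completion, so your appeal to pointwise reasoning is legitimate). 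Your argument also proves slightly more than asked: the order estimate $|y-u| \le z_+$, of which $\|y-u\| \le \|z_+\| \le \|z\|$ is an immediate consequence, whereas the paper records only the norm inequality. Finally, your closing observation is correct and is arguably the cleanest proof of all: applying the Riesz decomposition property \cite[Theorem 1.13]{A-B} to $|y| \le \bigl| |x| + z_+ \bigr|$ produces $y = u+v$ with $|u|\le|x|$ and $|v|\le z_+$ in one line. One minor wording quibble: the decomposition $y=u+v$ with $|v|\le z$ is sufficient for, not equivalent to, the stated conclusion $\|y-u\|\le\|z\|$.
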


\begin{proof}[Sketch of a proof]
Without loss of generality, we may assume $z \geq 0$.
We have
$$
\big\| |y| - |y| \wedge |x| \big\| =
\big\| |y| \wedge (|x| + z) - |y| \wedge |x| \big\| =
\big\| |y| \wedge \big((|x| + z) - |x| \big) \big\| \leq \|z\| .
$$
It remains to show that there exists $u \in E$ so that $|u| = a := |y| \wedge |x|$
and $\|u - y\| = \| |u| - |y| \|$.
To this end, recall that the ideal $I_y$ generated by $|y|$ can be identified with $C(K)$, for some $K$ (with $|y|$ corresponding to $\one$).
Further, $y$ can be identified with
the function $y(t) = |y|(t) w(t)$, where $|w| = 1$. 
We can set $u(t) = a(t) w(t)$.
\end{proof}

\begin{proof}[Proof of Theorem \ref{t:loc appr}]
(1) Suppose first that, for any $x \in \ball(E)$, and any $\vr^\prime > \vr$, we can
find a BP map $T_x$ so that $\|Tx - T_xx\| < \vr^\prime$.
Then $|Tx| \leq |T_x x| + |Tx - T_x x| \leq \|T_x\| |x| + |Tx - T_x x|$.
If $y \geq 0$ is disjoint from $x$, then
$|Tx| \wedge y \leq \|T_x\| |x| \wedge y + |Tx - T_x x| \wedge y$
has norm not exceeding $\|Tx - T_xx\|$. From the definition, $T$ is $\vr$-BP.

Suppose, conversely, that $T$ is $\vr$-BP. By Theorem \ref{t:BP=>IP},
for any $\vr^\prime > \vr$, and any $x \in \ball(E)$, there exists $\lambda > 0$
so that $|Tx| \leq \lambda |x| + z$, where $\|z\| < \frac{\vr+\vr^\prime}{2}$.
By Lemma \ref{l:decomp}, there exists $y \in E$ with $|y| \leq \lambda |x|$,
and $\|y - Tx\| < \vr^\prime$. Since $E$ has a quasi-interior point, by \cite[Lemma 4.17]{AA}, there exists $T_x \in B(E)$ band preserving such that $|T_x z| \leq \lambda|z|$, for every $z\in E$ and $\|T_xx- y\|\leq\frac{\vr^\prime-\vr}{2}$. Hence,
$$
\|Tx-T_xx\|\leq\|Tx-y\|+\|T_xx-y\|\leq\vr^\prime.
$$

(2) is handled similarly.
\end{proof}

\begin{remark}
For a Dedekind complete Banach lattice $X$ and $\vr\geq0$, a similar argument using  \cite[Theorem 2.49]{A-B} yields that if $T\in\vr-\mathcal Z(X)$, then $T\in ABP_{loc}(2\vr)$ with local approximants satisfying $\sup_x\|S_x\|\leq2\rho_{\vr}(T)$. Also if $T$ is an $\vr$-BP operator, then it is locally $2\vr$-approximable by BP maps.
\end{remark}

\bigskip

The following diagram illustrates the relation among the different notions introduced here,
for bounded operators.
The non-trivial implications are labeled with the reference of the corresponding result
where they are proved. Note the values of $\vr$ may differ from one to another, and some
of the implications are proved only for some classes of Banach lattices.
$$
\xymatrix{&\vr-\mathcal Z\ar@{=>}[rd]\ar@{=>}[dd]^{\ref{t:loc appr}}&&\\
ABP(\vr)\ar@{=>}[ur]\ar@{=>}[dr]&&\vr-IP\ar@{=>}@//[ld]^{\ref{t:loc appr}}\ar@{<=>}[r]^{\ref{t:BP=>IP}}&\vr-BP\\
&ABP_{loc}(\vr)\ar@{=>}@/^/[ur]&&}
$$
For unbounded operators the picture is different: we do not know whether
$\vr$-BP implies $\vr$-IP.

In Section \ref{s:counter}, we show that some of the arrows on the diagram
cannot be reversed: for every $\vr > 0$ there exists a contraction in
$\vr-{\mathcal{Z}}$ (and in $ABP_{loc}(\vr)$), but not in $ABP(\delta)$
for $\delta < 1/2$.

\section{Stability of almost band preservers}\label{s:stability}

We will show now that, under some mild hypothesis,
an almost band-preserving operator is close to a band-preserving one.

\begin{theorem}\label{t:BP ocont}
If $E$ is an order continuous Banach lattice, and $T \in B(E)$ is $\vr$-$\BP$, then there exists a band-preserving
$R \in B(E)$ so that $\|R\| \leq \|T\|$, and $\|T-R\| \leq 4\vr$.
\end{theorem}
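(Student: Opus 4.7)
The plan is to reduce to the case where $E$ has a weak order unit---using the fact that an order continuous Banach lattice decomposes as an unconditional sum of bands, each with such a unit (as in the proof of Corollary~\ref{c:auto OC})---and then to represent $E$ as a K\"othe function space on some $(\Omega,\Sigma,\mu)$, where by Proposition~\ref{p:ABP Kothe} the BP operators are exactly the multiplications by $L_\infty$ functions. Because $T$ is $\vr$-BP, it maps each component band almost into itself (the off-component parts having norm at most $\vr$), so constructing the approximant on each band separately and summing reduces the problem to this K\"othe setting.

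The central estimate is the two-partition bound: for every band projection $P$, Proposition~\ref{p:equiv} gives immediately
\[
\|T-(PTP+P^\perp TP^\perp)\|=\|PTP^\perp+P^\perp TP\|\leq 2\vr,
\]
so $T$ is within $2\vr$ of an operator that respects the decomposition $\{P,P^\perp\}$. To obtain a single $R$ respecting all band decompositions simultaneously, I would consider the directed net of partial diagonalizations $R_\pi=\sum_{A\in\pi}P_A T P_A$, indexed by finite measurable partitions $\pi$ of $\Omega$ ordered by refinement, and extract a cluster point $R$ in an appropriate weak operator topology. Each $R_\pi$ is a ``block diagonal'' of $T$ with $\|R_\pi\|\leq\|T\|$, and order continuity (the Fatou-type compactness it provides on the band projection algebra) yields a limit $R$ with $\|R\|\leq\|T\|$ that commutes with every $P_A$, hence lies in $\mathcal{Z}(E)$ and is BP.

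For the norm bound $\|T-R\|\leq 4\vr$, I expect the constant $4$ to arise as $2+2$: one $2\vr$ from the two-partition estimate applied to the ``cross-block'' part of $(T-R_\pi)x$ along a sufficiently fine partition, and a further $2\vr$ from the residual ``within-block'' error that survives the passage to the limit. The latter would be controlled through the duality result Proposition~\ref{p:duality}, which guarantees that $T^*$ is also $\vr$-BP on $E^*$, allowing the analogous two-partition estimate to be run in the dual pairing $\langle R_\pi x,x^*\rangle$ for testing functionals $x^*\in E^*$.

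The main obstacle is precisely this combination step: crude iteration of the two-partition bound over refinements leads to operator-norm errors that grow with the number of blocks (visible already in spaces like $L_1$), so $R$ cannot simply be an operator-norm limit of the $R_\pi$. The argument must work element-wise and leverage the unconditional disjoint decomposition guaranteed by order continuity to combine block-wise errors additively rather than multiplicatively; pairing the analysis of $T$ with that of $T^*$ via Proposition~\ref{p:duality} is what should pin down the factor $4$.
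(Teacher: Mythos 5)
Your proposal assembles the right raw ingredients (the net of block diagonals $R_\pi=\sum_{A\in\pi}P_A T P_A$ over refining finite band-projection partitions, a weak cluster point, duality), and in that sense it is close in spirit to the paper's Lemma~\ref{l:BP dual}; but the step you yourself flag as ``the main obstacle'' is a genuine gap, and your guesses for how to close it (element-wise work, a $2\vr+2\vr$ split) are not how it is done, nor would they work as stated. The decisive point you are missing is a single averaging identity that makes the bound \emph{uniform over all partitions}: for any finite partition of unity $(P_1,\dots,P_n)$ by pairwise disjoint band projections,
$$
T-\sum_{k=1}^n P_kTP_k \;=\; \sum_{i\neq j}P_iTP_j \;=\; 4\,\ave_{S\subset\{1,\dots,n\}}\Bigl(\sum_{i\in S}P_i\Bigr)T\Bigl(\sum_{i\in S^c}P_i\Bigr),
$$
because each ordered pair $i\neq j$ satisfies $i\in S$, $j\notin S$ for exactly $2^{n-2}$ of the $2^n$ subsets $S$. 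Every averaged term has the form $QTQ^\perp$ with $Q$ a band projection, hence norm at most $\vr$ by Proposition~\ref{p:equiv}, so $\|T-T_P\|\leq 4\vr$ for \emph{every} partition $P$ --- there is no growth with the number of blocks, no iteration of two-block estimates, and no residual ``within-block'' error to absorb in the limit. The constant $4$ is purely combinatorial (each off-diagonal block carries weight $1/4$ in the average), not a $2+2$ decomposition.

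The second gap is compactness: a bounded net in $B(E)$ need not have cluster points in any weak operator topology (the ball of $B(E)$ is WOT-compact essentially only when $E$ is reflexive), and ``Fatou-type compactness on the band projection algebra'' is not a substitute. The paper circumvents this by first passing to the dual: by Proposition~\ref{p:duality}, $T^*$ is $\vr$-BP on $E^*$, and $B(E^*)=(E^*\hat{\otimes}E)^*$ via trace duality, so the bounded net $(T^*)_P$ has a weak$^*$ cluster point $U$ with $\|T^*-U\|\leq 4\vr$ and $\|U\|\leq\|T\|$. Order continuity of $E$ then enters exactly three times: it gives $\vr$-BP-ness of $T^*$; it makes band projections on $E^*$ weak$^*$-continuous, so $RUR^\perp=0$ for every band projection $R$ and $U$ is BP ($E^*$ being Dedekind complete); and it makes $E$ an ideal in $E^{**}$, so that $|U^*x|\leq\|U\||x|$ forces $U^*(E)\subset E$, and $R=U^*|_E$ is the desired BP operator with $\|T-R\|=\|T^*-U\|\leq 4\vr$. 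Your preliminary reduction to K\"othe function spaces with weak order units via Proposition~\ref{p:ABP Kothe} is not only unnecessary but re-imports the same difficulty, since controlling the cross-band part $\sum_{\alpha\neq\beta}P_\alpha TP_\beta$ over the (possibly infinite) family of bands is again precisely the uniform off-diagonal estimate above; likewise, your claim $\|R_\pi\|\leq\|T\|$ is true but itself requires an averaging argument (over signs $D_\epsilon=\sum_i\epsilon_iP_i$, which preserve moduli), not just block-diagonality.
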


For positive $\vr$-BP operators a similar result holds under weaker assumptions on $X$.

\begin{proposition}\label{p:ocont}
Suppose $X$ is a Dedekind complete
Banach lattice having a Fatou norm with constant $\fatou$.
Then for any positive $\vr$-BP operator $T\in B(X)$ there exists $0\leq S\leq T$
such that $S$ is band-preserving and $\|T-S\|\leq 4\fatou\vr$.
\end{proposition}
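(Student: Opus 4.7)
My plan is to construct $S$ as the band projection of $T$ onto the center $Z(X)\subset L_r(X)$, and then bound $\|T-S\|$ using the Fatou property together with the $\vr$-BP condition. Since $X$ is Dedekind complete, $L_r(X) = L_b(X)$ is also Dedekind complete and $Z(X)$ is a band therein (a standard fact from Aliprantis--Burkinshaw), so the band projection exists and produces an $S \in Z(X)$, hence band-preserving, with $0 \leq S \leq T$. Equivalently, $S$ is characterized by $Sx = \sup\{Ux : U \in Z(X),\; 0 \leq U \leq T\}$ for $x \in X_+$, the supremum being in $X$; additivity on $X_+$ follows from the upward directedness of the indexing family (since $Z(X)$ is a band in $L_r(X)$, the join of two admissible $U$'s is again admissible), and standard Riesz-space arguments extend $S$ to a positive linear operator on all of $X$.

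For the norm estimate, the positivity of $T - S$ gives $|(T-S)x| \leq (T-S)|x|$, so it suffices to verify $\|(T-S)x\| \leq 4\fatou\vr$ for $x \in X_+$ of norm at most $1$. Setting $y := (T-S)x \geq 0$ and decomposing $y = P_x y + P_x^\perp y$ by the band projection $P_x$ onto $B_x$, the fact that $Sx \in B_x$ yields $P_x^\perp y = P_x^\perp Tx$, and Proposition \ref{p:equiv} gives $\|P_x^\perp y\| \leq \vr\|x\| \leq \vr$ immediately. The core task is bounding $\|P_x y\| = \|P_x Tx - Sx\|$.

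For this, I would use the identification $S = \sup_N (T \wedge NI)$ in $L_r(X)$, together with the Riesz--Kantorovich formula $(T \wedge NI)x = \inf\{Ty' + N(x-y') : 0 \leq y' \leq x\}$, the infimum being over a downward directed set. Writing $w := P_x Tx - Sx \in B_x$, one approximates $w$ within the principal ideal $I_x$ via the truncations $w \wedge nx \uparrow w$ (using that $x$ is a quasi-interior point of the band $B_x$); the Fatou property then gives $\|w\| \leq \fatou \sup_n \|w \wedge nx\|$, reducing the problem to bounding each $\|w \wedge nx\|$ within $I_x$, where multiplication-operator comparisons with $T$ become tractable. The $\vr$-BP condition, applied to the test elements $y' \leq x$ arising in the Riesz--Kantorovich infimum, controls the off-band leakage of $T$ via $\|P_{y'}^\perp Ty'\| \leq \vr\|y'\|$, and combining these (with a careful splitting of $Tx$ into several pairwise disjoint pieces) yields $\|w\| \leq (4\fatou - 1)\vr$.

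The main obstacle is precisely this last quantitative step: translating the order-theoretic maximality of $S$ into a Banach-lattice norm bound for $R = T-S$. The Fatou property is essential for converting the order supremum defining $S$ into a norm inequality (contributing the factor $\fatou$), while the factor $4$ accumulates from the triangle inequality applied across the decomposition $y = P_x y + P_x^\perp y$ together with the $\vr$-BP condition used on several disjoint pieces arising from the Riesz--Kantorovich splitting of $x$ into $y'$ and $x-y'$. Once $\|P_x y\| \leq (4\fatou - 1)\vr$ is established, summing with $\|P_x^\perp y\| \leq \vr$ yields $\|(T-S)x\| \leq 4\fatou\vr$ for $x \in X_+$, and the reduction via $|(T-S)x| \leq (T-S)|x|$ completes the proof.
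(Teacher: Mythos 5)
Your construction of $S$ is sound, and in fact it produces exactly the same operator as the paper does: since $X$ is Dedekind complete, $Z(X)=\mathrm{Orth}(X)$ is a band in $L_b(X)$, and the band projection of $T\geq 0$ onto it, $Sx=\sup\{Ux: U\in Z(X),\ 0\leq U\leq T\}$ for $x\in X_+$, is well defined, band-preserving, and satisfies $0\leq S\leq T$. The problem is that this settles only the qualitative part of the proposition; the entire content is the estimate $\|T-S\|\leq 4\fatou\vr$, and at precisely that point your argument is a plan rather than a proof. The phrases ``multiplication-operator comparisons with $T$ become tractable'' and ``a careful splitting of $Tx$ into several pairwise disjoint pieces yields $\|w\|\leq(4\fatou-1)\vr$'' assert the conclusion without deriving it: you never exhibit the disjoint pieces, never say how the $\vr$-BP condition applied to the Riesz--Kantorovich test elements $0\leq y'\leq x$ controls $P_x Tx - Sx$ (note that $\|P_{y'}^\perp Ty'\|\leq\vr\|y'\|$ bounds leakage \emph{out of} $B_{y'}$, whereas $w=P_xTx-Sx$ lives \emph{inside} $B_x$, so these bounds do not plug in directly), and the constant bookkeeping is incoherent --- if $\|w\wedge nx\|\leq c$ for all $n$, Fatou gives $\|w\|\leq\fatou c$, so reaching $(4\fatou-1)\vr$ would require the unexplained target $c=(4-1/\fatou)\vr$, after which adding $\|P_x^\perp y\|\leq\vr$ additively (rather than inside the Fatou supremum) is exactly the kind of arithmetic that suggests the bound was reverse-engineered.

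The missing idea is the paper's averaging identity over abstract partitions of unity. For $P=(P_1,\dots,P_n)$ with $P_iP_j=0$ ($i\neq j$) and $\sum_k P_k=I$, set $T_P=\sum_k P_kTP_k$; then $T-T_P=\sum_{i\neq j}P_iTP_j=4\,\ave_{S\subset\{1,\dots,n\}}\bigl(\sum_{i\in S}P_i\bigr)T\bigl(\sum_{i\in S^c}P_i\bigr)$, and each averaged term has norm at most $\vr$ by Proposition \ref{p:equiv}, giving $\|T-T_P\|\leq 4\vr$ \emph{uniformly} in $P$ --- this combinatorial identity is the sole source of the constant $4$. Since $T\geq 0$, the net $(T_Px)_P$ decreases for $x\in X_+$, one sets $Sx=\bigwedge_P T_Px$ (which, by the commutation of any $U\in Z(X)$ with band projections, coincides with your $S$), and then $(T-S)x=\bigvee_P(T-T_P)x$ is an increasing net with every term of norm at most $4\vr\|x\|$, so the Fatou property applies in one clean stroke to give $\|(T-S)x\|\leq 4\fatou\vr\|x\|$. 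Your route via $S=\sup_N(T\wedge NI)$ and truncations $w\wedge nx$ applies Fatou to the wrong net and leaves no mechanism for the factor $4$; without something playing the role of the averaging identity, the quantitative step fails.
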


Note that order continuous Banach lattices, and dual Banach lattices, are
Dedekind complete, and have Fatou norm with constant $1$ (see e.g.
\cite[Theorem 2.4.2 and Proposition 2.4.19]{M-N}).

For the proof of Proposition \ref{p:ocont}, we need to introduce an order
in the family of finite sets of band projections.
These can be considered as an abstract version of partitions of unity:

\begin{definition}
Given a Banach lattice $E$, let ${\mathcal{P}}$ be the family of finite sets of
band projections $P = (P_1, \ldots, P_n)$ so that $P_i P_j = 0$ whenever $i \neq j$,
and $\sum_{k=1}^n P_k = I_E$. We say that $P = (P_1, \ldots, P_n) \prec Q = (Q_1, \ldots, Q_m)$ if for $1 \leq i \leq n$
there exists a set $S_i \subset \{1, \ldots, m\}$ so that $\sum_{j \in S_i} Q_j = P_i$.
\end{definition}

Note that the order $\prec$ makes ${\mathcal{P}}$ into a net: for $P = (P_1, \ldots, P_n), Q = (Q_1, \ldots, Q_m) \in {\mathcal{P}}$, we can define the family $R$ consisting of band projections $R_{ij} = P_i Q_j$, which satisfies $P,Q\prec R$.

As a preliminary step toward Theorem \ref{t:BP ocont}, we establish:

\begin{lemma}\label{l:BP dual}
Let $E$ be an order continuous Banach lattice, and $T \in B(E^*)$ is $\vr$-$\BP$,
then there exists a band-preserving
$U \in B(E^*)$ so that $\|U\| \leq \|T\|$, and $\|T-U\| \leq 4\vr$.
\end{lemma}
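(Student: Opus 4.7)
My plan is to adapt the argument of Proposition~\ref{p:ocont}, which produced a band-preserving approximation with constant $4\fatou$ for positive $\vr$-BP operators on a Dedekind complete Banach lattice with Fatou norm, to the non-positive setting on $E^*$. The point is that $E^*$ is itself Dedekind complete with Fatou constant~$1$, so the hoped-for conclusion would carry the constant $4\cdot 1=4$. The role of order continuity of $E$ is via Proposition~\ref{p:duality}(2): every band projection on $E^*$ is the adjoint of a band projection on $E$, hence is weak-$*$ continuous, which stabilizes the limiting step.

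For each $P=(P_1,\dots,P_n)\in\mathcal{P}$, each block $P_iTP_i\colon P_i(E^*)\to P_i(E^*)$ inherits the $\vr$-BP property, and $P_i(E^*)$ is itself a Dedekind complete Banach lattice with Fatou constant~$1$. I would construct on each block a band-preserving $V_i\in B(P_i(E^*))$ close to $P_iTP_i$ by running the argument of Proposition~\ref{p:ocont} on the block, with the non-positivity of $P_iTP_i$ handled by replacing it with the positive majorant supplied by Theorem~\ref{t:BP=>IP} (which gives, on each block, an estimate $|P_iTP_ix|\le\lambda|x|+z_x$ with $\|z_x\|\le\vr$). I would then assemble a candidate $U_P=\sum_iV_iP_i\in B(E^*)$ and verify that it is band-preserving: every band projection on $E^*$ commutes with all the $P_i$'s in the partition, and each $V_i$ is band-preserving on its block, so a direct computation based on Proposition~\ref{p:commutator} yields the required commutation on $E^*$. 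The bounds to be established are $\|U_P\|\le\|T\|$ and $\|T-U_P\|\le 4\vr$.

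A cluster point $U$ of $(U_P)_{P\in\mathcal{P}}$ in the weak operator topology of $B(E^*)$ then exists inside the closed ball of radius $\|T\|$ by Banach--Alaoglu. Since every band projection $Q$ on $E^*$ appears in $\{Q,I-Q\}\in\mathcal{P}$, commutation of $U_P$ with $Q$ persists for all refinements and passes to~$U$; by $\sigma$-Dedekind completeness of $E^*$ this identifies $U$ as band-preserving. Lower semi-continuity of the operator norm transfers $\|U\|\le\|T\|$ and $\|T-U\|\le 4\vr$ to the limit.

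The main obstacle is the bound $\|T-U_P\|\le 4\vr$ uniformly in $P$. The off-diagonal error rearranges as $\sum_i(I-P_i)TP_i=\sum_jP_jT(I-P_j)$, which is a sum of lattice-disjoint terms in the bands $P_j(E^*)$, each of norm at most $\vr$; in a general Fatou-$1$ lattice a disjoint sum of $n$ such terms can still have norm as large as $n\vr$, so the proof must exploit the finer structure of the $V_i$'s and the decomposition of $Tx$ along the partition, in the same manner in which Proposition~\ref{p:ocont} squeezes out its $4\fatou$ constant through the interaction of the Fatou norm with suprema of positive increasing nets rather than through a naive triangle inequality. This tight interplay between the block construction and the Fatou property is where the careful value $4\vr$ has to be recovered.
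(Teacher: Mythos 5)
Your proposal has two genuine gaps, and the first is the one you yourself flag as ``the main obstacle'': nothing in your plan actually produces the uniform bound $\|T-U_P\|\le 4\vr$, and no per-block Fatou argument will, because the diagonal-versus-off-diagonal bookkeeping is not a block-by-block matter. The paper's proof contains precisely the idea you are missing, and it renders all of your block machinery unnecessary: one takes $T_P=\sum_{k}P_kTP_k$ itself (no band-preserving approximants $V_i$ at the finite stage --- band preservation emerges only in the limit) and observes that for every $S\subset\{1,\dots,n\}$ the operator $Q_S=\sum_{i\in S}P_i$ is a band projection, so Proposition \ref{p:equiv} gives $\|Q_S T Q_S^\perp\|\le\vr$; since each ordered pair $i\neq j$ satisfies $i\in S$, $j\in S^c$ for exactly $2^{n-2}$ of the $2^n$ subsets $S$, one gets the identity
$$
T-T_P=\sum_{i\neq j}P_iTP_j=4\,\ave_{S\subset\{1,\dots,n\}}\Big(\sum_{i\in S}P_i\Big)T\Big(\sum_{i\in S^c}P_i\Big),
$$
whence $\|T-T_P\|\le 4\vr$ uniformly in $P$. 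This averaging identity is the heart of the lemma and is absent from your argument. Separately, your construction of the $V_i$ is not viable as stated: Proposition \ref{p:ocont} requires a \emph{positive} operator, the blocks $P_iTP_i$ are not positive, and Theorem \ref{t:BP=>IP}(4) supplies only a pointwise majorization $|Tx|\le\lambda_x|x|+z_x$ with $\lambda_x$ depending on $x$ --- it does not furnish a positive operator majorant to which Proposition \ref{p:ocont} could be applied (and even for regular $T$ the modulus $|T|$ can fail to be $c$-BP for small $c$, cf.\ Remark \ref{r:BP modulus}).

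The second gap is the compactness step. Banach--Alaoglu does not give WOT cluster points of bounded nets in $B(E^*)$: the closed unit ball of $B(X)$ is WOT-compact only when $X$ is reflexive, and $E^*$ need not be. The paper instead invokes trace duality $B(E^*)=(E^*\hat{\otimes}_\pi E)^*$ and extracts a subnet of $(T_P)$ convergent in the weak$^*$ topology of that dual pairing (essentially point-weak$^*$ convergence of $T_Pe^*$ against elements of $E$, not of $E^{**}$), where Alaoglu does apply. With this weaker topology, passing the relation $RT_PR^\perp=0$ (valid once $(R,R^\perp)\prec P$) to the limit genuinely requires band projections on $E^*$ to be weak$^*$-to-weak$^*$ continuous --- which is exactly where the order continuity of $E$ enters, as you correctly anticipated via Proposition \ref{p:duality}(2). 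So your diagnosis of the role of order continuity is right, but the topology in which you extract the limit is wrong, and the combinatorial averaging behind the constant $4$ is missing; both must be repaired for the proof to go through.
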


\begin{proof}
For $P = (P_1, \ldots, P_n) \in {\mathcal{P}}$ on $E^*$, define $T_P = \sum_{k=1}^n P_k T P_k$. Since $T$ is $\vr$-BP, by Proposition \ref{p:equiv}, for every $S\subset \{1,\ldots,n\}$ we have that
$$
\Big\|\Big(\sum_{i \in S} P_i\Big) T \Big(\sum_{i \in S^c} P_i\Big)\Big\|\leq\vr.
$$
Note that
$$
\sum_{S\subset\{1,\ldots,n\}} \sum_{i\in S}\sum_{j\in S^c} P_iTP_j=\sum_{\substack{i,j=1\\i\neq j}}^n\sum_{\substack{S\subset\{1,\ldots,n\}\\i\in S,j\notin S}}P_iTP_j=2^{n-2}\sum_{\substack{i,j=1\\i\neq j}}^nP_iTP_j.
$$
Thus,
$$
T - T_P = \sum_{i \neq j} P_i T P_j = 4 \ave_{S \subset \{1, \ldots, n\}}
 \Big(\sum_{i \in S} P_i\Big) T \Big(\sum_{i \in S^c} P_i\Big) ,
$$
hence $\|T - T_P\| \leq 4 \vr$ for every $P\in\mathcal P$.

Recall that we have $B(E^*) = (E^* {\hat{\otimes}} E)^*$
via the trace duality: $\langle A, e^* \otimes e \rangle = \langle Ae^*, e \rangle$,
for $e \in E$, $e^* \in E^*$, and $A \in B(E^*)$ (see e.g. \cite[Section 1.1.3]{DFS}).
Thus, the operators $T_P \in B(E^*,E^*)$
have a subnet convergent weak$^*$ to $U \in B(E^*,E^*)$,
with $\|T - U\| \leq 4 \vr$.

Finally, we show that, for any band projection $R \in B(E^*)$, we have $R U R^\perp = 0$
(as $E^*$ is $\sigma$-Dedekind complete, the band-preserving property of $U$ will follow).
For ``large enough'' $P\in\mathcal P$ (that is, when $(R,R^\perp)\prec P$), we have
$R T_P R^\perp = 0$. From the definition of $U$, $T_P \to U$ in the point-weak$^*$
topology. By \cite[Corollary 2.4.7]{M-N}, $R$ and $R^\perp$ are weak$^*$ to weak$^*$
continuous, hence $T_P \to U$ in the point-weak$^*$ topology as well.
Thus, $R T_P R^\perp = 0$.
\end{proof}

\begin{proof}[Proof of Theorem \ref{t:BP ocont}]
Since $T$ is $\vr$-BP and $E$ is order continuous,
by Proposition \ref{p:duality}, we have that $T^*$ is also $\vr$-BP.
By Lemma \ref{l:BP dual}, there exists a band-preserving $U \in B(E^*)$
so that $\|U\| \leq \|T^*\|$, and $\|T^*-U\| \leq 4\vr$.

Now, since $U$ is band preserving we have that $-\|U\| I \leq U\leq\|U\| I$,
which means that for $x\in E$,
$$
|U^*x|\leq\|U\||x|.
$$
Since $E$ is order continuous, it is an ideal in $E^{**}$, and the above inequality yields that $U^*(E)\subset E$. In particular, $R=U^*|_E:E\rightarrow E$ is well defined and satisfies $R^*=U$. By Proposition \ref{p:duality}, it follows that $R$ is band preserving in $E$. Moreover, we have
$$
\|T-R\|=\|T^*-R^*\|=\|T^*-U\|\leq 4\vr.
\qedhere $$ \end{proof}

The following easy lemma may well be known, but we haven't seen it stated explicitly.

\begin{lemma}\label{l:inf}
Suppose ${\mathcal{A}}$ and ${\mathcal{B}}$ are bounded below sets in a
Dedekind complete Banach lattice $X$. Then
$$
\bigwedge_{a \in {\mathcal{A}}} a + \bigwedge_{b \in {\mathcal{B}}} b =
\bigwedge_{a \in {\mathcal{A}}, b \in {\mathcal{B}}} (a+b) .
$$
\end{lemma}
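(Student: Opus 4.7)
The plan is to prove the two inequalities separately, using only the basic fact that translation by a fixed element preserves suprema and infima in an ordered vector space, i.e.\ $\bigwedge_a(a+c) = \bigl(\bigwedge_a a\bigr) + c$ for any $c \in X$ and any bounded below set indexed by $a$. Dedekind completeness guarantees that all infima in sight actually exist: both $\alpha := \bigwedge_{a\in\mathcal{A}} a$ and $\beta := \bigwedge_{b\in\mathcal{B}} b$ exist since $\mathcal{A}$ and $\mathcal{B}$ are bounded below, and then $\mathcal{A}+\mathcal{B}$ is bounded below by $\alpha+\beta$, so $\gamma := \bigwedge_{a,b}(a+b)$ exists as well.

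For the inequality $\alpha+\beta \leq \gamma$, I simply observe that for every $a\in\mathcal{A}$ and $b\in\mathcal{B}$, $\alpha\leq a$ and $\beta\leq b$, hence $\alpha+\beta\leq a+b$; taking the infimum over all such pairs gives $\alpha+\beta\leq\gamma$.

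For the reverse inequality $\gamma\leq\alpha+\beta$, I fix an arbitrary $b_0\in\mathcal{B}$. For every $a\in\mathcal{A}$, $\gamma\leq a+b_0$, so $\gamma-b_0\leq a$. Taking the infimum over $a\in\mathcal{A}$ and using translation-invariance yields $\gamma-b_0\leq\alpha$, equivalently $\gamma-\alpha\leq b_0$. Since $b_0\in\mathcal{B}$ was arbitrary, taking the infimum over $\mathcal{B}$ gives $\gamma-\alpha\leq\beta$, i.e.\ $\gamma\leq\alpha+\beta$.

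There is no real obstacle here; the argument is purely formal and uses nothing beyond the order-theoretic linearity of infima, which is a standard property of Riesz spaces. The only point requiring care is to confirm existence of each infimum before manipulating it, which Dedekind completeness supplies immediately.
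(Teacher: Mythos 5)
Your proof is correct and follows essentially the same route as the paper's: the key step in both is to fix $b_0 \in \mathcal{B}$, take the infimum over $a \in \mathcal{A}$ using translation-invariance of infima, and then take the infimum over $b_0$. The only cosmetic difference is that the paper first normalizes WLOG so that $\bigwedge_a a = \bigwedge_b b = 0$, whereas you carry $\alpha$ and $\beta$ through explicitly; the substance is identical.
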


\begin{proof}
Without loss of generality we can assume
$\bigwedge_{a \in {\mathcal{A}}} a = 0 = \bigwedge_{b \in {\mathcal{B}}} b$,
then clearly $\bigwedge_{a \in {\mathcal{A}}, b \in {\mathcal{B}}} (a+b) \geq 0$.
To prove the converse, note that, for any $b_0 \in {\mathcal{B}}$,
$$
\bigwedge_{a \in {\mathcal{A}}, b \in {\mathcal{B}}} (a+b) \leq
\bigwedge_{a \in {\mathcal{A}}} (a+b_0) = b_0 .
$$
Complete the proof by taking the infimum over $b_0 \in {\mathcal{B}}$.
\end{proof}

\begin{proof}[Proof of Proposition \ref{p:ocont}]
For $P = (P_1, \ldots, P_n) \in {\mathcal{P}}$, we define $$T_P = \sum_{k=1}^n P_k T P_k.$$
As in the proof of Theorem \ref{t:BP ocont}, since $T$ is $\vr$-BP, we have
$\|T - T_P\| \leq 4 \vr$.

Since $T$ is positive, for every $x\in X_+$, the net $(T_Px)_{P\in\mathcal{P}}$ is decreasing.
Indeed, let $P = (P_1, \ldots, P_n) \prec Q = (Q_1, \ldots, Q_m)$. Thus, for
$1 \leq i \leq n$ there exists a set $S_i \subset \{1, \ldots, m\}$ so that
$\sum_{j \in S_i} Q_j = P_i$. In particular, $P_i\geq Q_j$ for every $j\in S_i$, and we get
$$
T_Px=\sum_{i=1}^n P_iTP_ix =
 \sum_{i=1}^n\sum_{j\in S_i} Q_jTP_ix \geq
 \sum_{i=1}^n\sum_{j\in S_i} Q_jTQ_jx = T_Qx.
$$

Since $X$ is Dedekind complete, 
$\bigwedge_{P\in\mathcal{P}}T_Px$ exists. For each $x\in X_+$, let
$Sx = \bigwedge_{P\in\mathcal{P}}T_Px$.
Then $S$ defines an additive positively homogeneous function on $X_+$.
The homogeneity is easy to verify: for any $\lambda \geq 0$ and $x \in X_+$,
we have
$$
S(\lambda x) = \bigwedge_{P\in\mathcal P}(\lambda T_P x) =
\lambda \bigwedge_{P\in\mathcal P} (T_P x) = \lambda S x .
$$
The positive additivity follows directly from Lemma \ref{l:inf}.

Clearly, $0\leq S\leq T$. Also, for any $x \in X_+$,
$(T-S)x = \bigvee_P (T - T_P)x$, hence, by the Fatou Property,
$$
\|(T-S)x\| = \sup_{P\in\mathcal P} \|(T - T_P)x\| \leq 4\fatou \vr \|x\|.
$$
As $T-S$ is a positive operator, $\|T-S\| \leq 4\fatou \vr$.

It remains to see that $S$ is band preserving.
Given a band projection $R$ and $x \in X_+$, we have
$R T_P R^\perp x = 0$ for $P$ ``large enough'' (that is, when $(R,R^\perp)\prec P$).
Therefore, for $x \in X_+$,
$$
0 \leq R S R^\perp x \leq \bigwedge_P R T_P R^\perp x = 0,
$$
which implies $RSR^\perp=0$.
\end{proof}

It should be noted that the hypotheses of Theorem \ref{t:BP ocont}  and
Propositions \ref{p:ocont} are not always necessary: In Theorem \ref{t:C(K)} we will see
that on $C(K)$ spaces every $\vr$-BP operator is close to a BP one.

Suppose now that $E$ is a Banach lattice. Under what conditions on $E$
does there exist $c > 0$ so that, for every $\vr > 0$, $E$ can be equipped
with a new lattice norm $\triple{ \cdot }$ so that there exists
a $\vr$-BP operator on $(E, \triple{ \cdot })$ with the property that
$\triple{T-S} \geq c$ for every BP operator $S$? By Theorem \ref{t:BP ocont}, this cannot happen when
$E$ is order continuous (order continuity passes to renormings).
A partial positive answer is given below.

\begin{proposition}\label{p:atom lim}
Suppose $E$ is a Banach lattice so that its dual has an atom $f$ with
the property that $f^\perp = \{g \in E^* : f \perp g \}$ is not weak$^*$ closed.
Then, for every $\vr \in (0,1)$, $E$ can be equipped with an equivalent lattice norm
$\triple{ \cdot } = \triple{ \cdot }_\vr$ so that there exists a positive rank one
contraction $T \in \vr-{\mathcal{Z}}((E, \triple{ \cdot}))$ so that $\triple{T - S} \geq c$
whenever $S$ is a BP map ($c > 0$ is a constant depending on $E$).
\end{proposition}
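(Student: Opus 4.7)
The plan is to take a rank-one operator $Tx = f(x)\,u$ on $E$ and renorm $E$ so that the atom direction carries a heavy weight. Concretely, after normalising $\|f\|_{E^*} = 1$, I pick $u \in E_+$ with $f(u) = 1$ and $\|u\|$ as close to $1$ as we like, and set
\[
\triple{x} \;=\; \max\bigl(\|x\|,\ \vr^{-1}|f(x)|\bigr).
\]
That this is a lattice norm, equivalent to $\|\cdot\|$, rests on the classical fact that an atom of $E^*$ is a lattice homomorphism, so $|f(x)| = f(|x|)$ and monotonicity under $|x| \le |y|$ is automatic.

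The routine verifications proceed as follows. Since $\triple{u} = \vr^{-1}$, one has $\triple{Tx} = |f(x)|\vr^{-1} \le \triple{x}$, so $T$ is a positive rank-one contraction. For $T \in \vr-\mathcal{Z}$, I would take $\lambda = 1$ and $z = (|f(x)|u - |x|)_+$; then $|Tx| \le |x| + z$, the lattice-homomorphism property of $f$ forces $f(z) = \bigl(|f(x)| - |f(x)|\bigr)_+ = 0$, and hence $\triple{z} = \|z\| \le \vr\|u\|$, which equals $\vr$ after a cosmetic rescaling.

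The heart of the argument is the lower bound $\triple{T-S} \ge c$ for every band-preserving $S$, with $c > 0$ depending only on $E$. Because $f$ is an atom, $S^* f = \gamma_S f$ for some $\gamma_S$; testing on $u$ and using $f(u - Su) = 1 - \gamma_S$ together with the $\vr^{-1}|f(\cdot)|$ part of $\triple{\cdot}$ immediately yields $\triple{T-S} \ge |1 - \gamma_S|$. For the case $\gamma_S$ close to $1$, the hypothesis is crucial: non-weak$^*$-closedness of $f^\perp$ is equivalent to the functional $\phi \in E^{**}$ determined by $Pg = \phi(g) f$ (where $P$ is the band projection onto $\mathrm{span}(f)$) not lying in $E$, and standard Banach--Alaoglu and biannihilator reasoning supplies a bounded net $g_\alpha \in f^\perp$ with $g_\alpha \to f$ weak$^*$ in $E^*$, as well as (via Goldstine) a bounded net $u_\alpha \in E_+$ with $u_\alpha \to \phi$ weak$^*$ in $E^{**}$. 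Setting $v_\alpha = u - u_\alpha$, one has $f(v_\alpha) \to 0$ and $\triple{v_\alpha}$ uniformly bounded; evaluating $((T-S)v_\alpha)(g_\alpha)$ and exploiting $S^* g_\alpha \in f^\perp$ with $S^*g_\alpha \to \gamma_S f$ weak$^*$ should produce the complementary bound $\triple{T-S} \ge c'|\gamma_S|$ for a constant $c' > 0$ depending only on $E$. Combining, $\triple{T-S} \ge \min\{|1-\gamma_S|,\ c'|\gamma_S|\} \ge c'/(1+c')$, uniformly in $\vr$.

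The main obstacle is executing this second estimate with a constant $c'$ independent of $\vr$. Expanding,
\[
g_\alpha((T-S)v_\alpha) \;=\; f(v_\alpha)g_\alpha(u) - g_\alpha(Su) + g_\alpha(Su_\alpha),
\]
the first two terms have manageable limits ($0$ and $-\gamma_S$), while the third is a pairing of two distinct weak$^*$-convergent nets and need not converge. Resolving it requires either a diagonal/subnet choice of $u_\alpha$ matched to $g_\alpha$, or a bidual argument using $S^{**}\phi = \gamma_S \phi$ together with weak$^*$ lower semicontinuity of $\triple{\cdot}^{**}$. The dual-norm bound $\triple{g_\alpha}^* \le \|g_\alpha\|^*$ (take $t = 0$ in the infimal convolution $\triple{g}^* = \inf_t(\|g - tf\|^* + |t|\vr)$) ensures that the resulting constant $c'$ does not degrade as $\vr \to 0$.
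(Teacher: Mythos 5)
Your construction coincides with the paper's up to normalization: the norm $\triple{x}=\max\{\|x\|,\vr^{-1}|f(x)|\}$ is exactly the paper's $\max\{\|x\|,\vr^{-1}\langle f,|x|\rangle\}$ (an atom of $E^*$ is indeed a lattice homomorphism), the operator $Tx=f(x)u$ matches the paper's $x\mapsto\langle f,x\rangle e$, and your contraction check, your primal verification of $T\in\vr-\mathcal{Z}$ via $z=(|f(x)|u-|x|)_+$ with $f(z)=0$ (the paper instead verifies $T^*\in\vr-\mathcal{Z}(E^*)$ and invokes Proposition \ref{p:duality center}; your shortcut is legitimate, modulo the harmless $\|u\|$ factor), and the estimate $\triple{T-S}\geq|1-\gamma_S|$ from $S^*f=\gamma_S f$ are all correct. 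However, the complementary bound $\triple{T-S}\geq c'|\gamma_S|$ --- which is the only place the hypothesis that $f^\perp$ is not weak$^*$ closed enters, and which you rightly call the heart of the argument --- is not established. Your expansion of $g_\alpha((T-S)v_\alpha)$ leaves the term $g_\alpha(Su_\alpha)$, a pairing of two independently weak$^*$-convergent nets, and as you yourself concede it ``need not converge''; neither the diagonal-subnet suggestion nor the bidual suggestion via $S^{**}\phi$ is carried out, and it is not evident that either would bound $|\gamma_S|$ by a multiple of $\triple{T-S}$ rather than of $\|S\|$. So as written the proof has a genuine gap at its crucial step.

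The gap closes without ever leaving $E^*$ --- no $\phi\in E^{**}$, no net $u_\alpha$, and hence no joint-pairing problem --- and this is what the paper does. Since $T^*g=g(u)f$, the range of $T^*$ lies in $\mathrm{span}(f)$; so if $P$ is the band projection onto that span and $g\in f^\perp$, then $P^\perp(T^*-S^*)g=-S^*g$, using $S^*(f^\perp)\subseteq f^\perp$. Band projections are contractive for the dual lattice norm, and $\triple{\cdot}$ coincides with the original norm on $f^\perp$ (for $g\perp f$ one has $|g-\alpha f|=|g|+|\alpha||f|\geq|g|$, so $\|g-\alpha f\|\geq\|g\|$, whence $\triple{g}=\|g\|$); therefore $\|S^*g\|\leq\triple{T-S}\,\|g\|$ for all $g\in f^\perp$. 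Now feed in your bounded net $g_\alpha\in f^\perp$ with $\|g_\alpha\|\leq C$ and $g_\alpha\to f$ weak$^*$: since $S^*$ is weak$^*$ continuous, $S^*g_\alpha\to S^*f=\gamma_S f$ weak$^*$, while $\|S^*g_\alpha\|\leq C\,\triple{T-S}$, so weak$^*$ lower semicontinuity of the norm gives $|\gamma_S|\leq C\,\triple{T-S}$ --- the missing estimate, with constant independent of $\vr$. Combined with your bound $\triple{T-S}\geq|1-\gamma_S|$, this yields $\triple{T-S}\geq 1/(1+C)$, which is (up to the $\delta$ lost in choosing $u$) exactly the paper's constant $(1-\delta)/(C+1)$.
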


As noted above, $E$ is order continuous if and only if any
band in $E^*$ is weak$^*$ closed if and only if any band projection on $E^*$ is weak$^*$
continuous. Of course, there may be no rank one band projections on $E^*$ at all.
We do not know whether Proposition \ref{p:atom lim} holds for general
non-order continuous lattices.

Proposition \ref{p:atom lim} is applicable, for instance, when $E = C(K)$,
where $K$ is an infinite compact Hausdorff space (cf. Remark \ref{r:renorm C(K)}).

\begin{proof}
Without loss of generality, we can assume that $f$ is positive and has norm one.
Note that $f^\perp$ is a $1$-codimensional sublattice of $E^*$.
Indeed, let $P$ be the (one-dimensional) band projection corresponding to $f$,
then $f^\perp$ is the range of $P^\perp = I - P$.

As $f^\perp$ is not weak$^*$ closed, by the Banach-Dieudonn\'e Theorem,
$f$ is a cluster point of $\{g \in f^\perp : \|g\| \leq C\}$, for some $C$.

Fix $\vr \in(0,1)$, and equip $E$ with the new lattice norm
$$
\triple{x} = \max \big\{ \|x\| , \vr^{-1} \langle f, |x| \rangle \big\} .
$$
Note that $\| \cdot \| \leq \triple{ \cdot } \leq \vr^{-1} \| \cdot \|$.
It is easy to check that the dual norm on $E^*$ is given by
$$
\triple{g} = \inf_{\alpha \in \RR} \big( \vr |\alpha| + \|g - \alpha f\| \big) =
 \inf_{g = \alpha f + h} \big( \vr |\alpha| + \|h\| \big) .
$$

Fix $\delta > 0$, and find a positive norm one $e \in E$ so that
$\langle f, e \rangle > 1 - \delta$. Define the rank one positive map
$T : E \to E : x \mapsto \langle f, x \rangle e$. It is easy to check
that $T$ acts contractively on $(E, \triple{ \cdot })$. Indeed, if
$\triple{x} \leq 1$, then
$$
\big| \langle f , x \rangle \big| \leq \langle f , |x| \rangle \leq \vr ,
$$
hence
$$
\triple{Tx} = | \langle f , x \rangle | \max \big\{\|e\|, | \vr^{-1} \langle f , e \rangle | \big\} \leq 1 .
$$

Further, $T^* g = \langle g, e \rangle f$. We show that
$T^* \in \vr-{\mathcal{Z}}(E^*, \triple{ \cdot })$
(then, by Proposition \ref{p:duality center},
$T \in \vr-{\mathcal{Z}}(E, \triple{ \cdot })$).
Pick $g \in E^*$ with $\triple{g} < 1$. Write $g = \alpha f + h$,
with $\vr |\alpha| + \|h\| < 1$. We need to show that
$|T^*g| \leq |g| + u$, with $\triple{u} \leq \vr$.
As $T$ is positive, we can and do restrict our attention to $g \geq 0$,
and to the decompositions with $\alpha \geq 0$ and $h \geq 0$.

Then
$T^* g = (\alpha \langle f,e \rangle + \langle h,e \rangle) f$, hence
$T^*g \leq (\alpha + \|h\|) f \leq g + \|h\| f$.
However, $\triple{f} \leq \vr$, and we are done.

Next show that, if $S$ is a BP map on $E$, then $\triple{T-S} \geq (1-\delta)/(C+1)$.
Recall $P$ is the band projection associated with $f$,
and then $P^\perp$ is the band projection onto $f^\perp$. We have
$$
\triple{T^* - S^*} \geq \triple{ P^\perp(T^* - S^*)|_{f^\perp} } = \triple{S^*|_{f^\perp}}
$$
(we use the fact that $S^*$ maps $f^\perp$ into itself).
Thus, for any $g \in f^\perp$, we have $|S^* g| \leq c |g|$, where $c = \triple{T-S}$.

Using the band-preserving property of $S^*$ once more, we observe
that $S^* f = \lambda f$, for some scalar $\lambda$. We claim that
$|\lambda| \leq Cc$. Indeed, we know that $f$ is a weak$^*$ cluster point of
$\{g \in f^\perp : \|g\| \leq C\}$. As $S^*$ is weak$^*$ to weak$^*$ continuous,
$S^*f$ is a weak$^*$ cluster point of $ \{S^*g : g \in f^\perp, \|g\| \leq C\}$,
which, in turn, lies inside $\{g \in f^\perp : \|g\| \leq cC\}$.

On the other hand, note that
$$
\triple{T^* f} = | \langle f, e \rangle | \triple{f} > (1-\delta) \triple{f}.
$$
The triangle inequality implies
$$
c = \triple{T^* - S^*} \geq \frac{\triple{ (T^* - S^*)f }}{\triple{f}} = \frac{\triple{T^*f} - \triple{S^*f}}{\triple{f}} >
(1 - \delta - Cc).
$$
Thus,
$$
\triple{T-S} \geq(1-\delta)/(C+1).
\qedhere
$$
\end{proof}

\section{$\vr$-BP operators on $C(K)$ spaces}\label{s:C(K)}

In this section, we turn our attention to operators on $C(K)$ spaces.
Let us start by presenting a criterion for a linear map on $C(K)$ to be $\vr$-BP.
Before the proof, recall that for $f\in C(K)$, we define its support as
$$\supp(f)=\overline{\{t\in K:f(t)\neq0\}}.$$

\begin{lemma}\label{l:BP C(K)}
Suppose $K$ is a compact Hausdorff space.
Then, for a linear map $T : C(K) \to C(K)$, the following statements are equivalent.
\begin{enumerate}
\item
$T$ is $\vr$-BP.
\item
If $x \in C(K)$ and $t \in K$ satisfy $x(t) = 0$, then $|[Tx](t)| \leq \vr \|x\|$.
\end{enumerate}
\end{lemma}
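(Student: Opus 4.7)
My plan is to prove $(2) \Rightarrow (1)$ directly and to deduce $(1) \Rightarrow (2)$ by combining an approximation argument with the automatic continuity guaranteed by Theorem \ref{t:cont on CK} (applied with $X = \RR$).

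For $(2) \Rightarrow (1)$, I will take $x \in \ball(C(K))$ and $y \geq 0$ with $y \perp x$, and observe that at any point $s$ where $y(s) > 0$ the disjointness forces $x(s) = 0$, so $(2)$ yields $|[Tx](s)| \leq \vr$, while at points $s$ with $y(s) = 0$ the minimum $(|Tx| \wedge y)(s)$ vanishes. Taking suprema gives $\||Tx|\wedge y\| \leq \vr$, so $T$ is $\vr$-BP.

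The converse implication $(1)\Rightarrow (2)$ is the subtle one, and the main obstacle is that disjointness in $C(K)$ is pointwise: any $y \geq 0$ with $y \perp x$ must vanish on the open set $\{x \neq 0\}$, so when $\{x = 0\}$ has empty interior at $t$ (e.g.\ $x$ has an isolated zero there), every continuous $y \perp x$ vanishes at $t$, and $(1)$ conveys no direct information about $[Tx](t)$. To bypass this, I first invoke Theorem \ref{t:cont on CK} to conclude that $T$ is bounded. Then, given $x \in \ball(C(K))$ with $x(t) = 0$ and $\eta > 0$, I use Urysohn's lemma to pick a continuous $h:K \to [0,1]$ with $h(t) = 1$ and $h = 0$ outside $U := \{s:|x(s)|<\eta\}$. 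The perturbation $x_\eta := (1-h)x$ then satisfies $\|x_\eta\| \leq \|x\|$, $\|x - x_\eta\|\leq \eta$, and $x_\eta \equiv 0$ on the open neighborhood $V := \{h=1\}$ of $t$.

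With $x_\eta$ now vanishing on an open neighborhood of $t$, a second Urysohn function $\phi$ with $\phi(t)=1$ and $\phi = 0$ outside $V$ yields the witness $y_\eta := |[Tx_\eta](t)|\,\phi$. By construction $y_\eta\geq 0$, and $y_\eta$ is supported in $V$, where $x_\eta$ vanishes, so $y_\eta \perp x_\eta$; evaluating at $t$ gives $\||Tx_\eta|\wedge y_\eta\|\geq |[Tx_\eta](t)|$. Applying $(1)$ to $x_\eta$ and $y_\eta$ then yields $|[Tx_\eta](t)|\leq \vr\|x_\eta\| \leq \vr\|x\|$. Finally, the boundedness of $T$ provides $|[Tx](t) - [Tx_\eta](t)| \leq \|T\|\,\eta$, whence $|[Tx](t)| \leq \vr\|x\| + \|T\|\,\eta$; letting $\eta \to 0$ completes the proof.
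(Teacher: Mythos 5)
Your overall strategy coincides with the paper's: both proofs handle $(2)\Rightarrow(1)$ by the same pointwise observation (where $y>0$, disjointness forces $x=0$), and both prove $(1)\Rightarrow(2)$ by perturbing $x$ to a nearby function vanishing on an open neighborhood of $t$, manufacturing a Urysohn witness $y$ supported there, and closing the gap via the automatic continuity of $T$ (Theorem \ref{t:cont on CK} with $X=\RR$), which is exactly the paper's citation as well. The only substantive difference is the perturbation itself: the paper uses the thresholding $x_\delta = (x_+ - \delta\one)_+ - (x_- - \delta\one)_+$, which automatically vanishes on the open set $\{s : |x(s)|<\delta\}$, whereas you use the multiplicative cutoff $x_\eta = (1-h)x$. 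That difference is inessential, but it is where your writeup has a genuine misstep.

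Plain Urysohn applied to $\{t\}$ and $K\setminus U$ gives $h$ with $h(t)=1$ and $h|_{K\setminus U}=0$, but the level set $V=\{h=1\}$ is \emph{closed} and need not be a neighborhood of $t$: it can equal $\{t\}$ alone (think of a tent function on $[0,1]$). In that case $x_\eta=(1-h)x$ may vanish near $t$ only at $t$ itself --- precisely when $t$ is an isolated zero of $x$, the very situation you identified as the obstacle --- and then your second Urysohn application fails: a continuous $\phi$ with $\phi(t)=1$ and $\phi=0$ outside $V=\{t\}$ exists only if $t$ is isolated, so the argument collapses back to the original difficulty. The repair is standard and costs one line: by normality choose an open set $W$ with $t\in W\subset\overline{W}\subset U$ and apply Urysohn to the pair $(\overline{W},\,K\setminus U)$, obtaining $h\equiv 1$ on $\overline{W}$ and $h=0$ off $U$; then $x_\eta$ vanishes on the open neighborhood $W$ of $t$, the witness $y_\eta$ can be taken supported in $W$, and the remaining steps --- the estimate $\||Tx_\eta|\wedge y_\eta\|\geq |[Tx_\eta](t)|$, the bound $\vr\|x_\eta\|\leq\vr\|x\|$, and the continuity step $|[Tx](t)-[Tx_\eta](t)|\leq\|T\|\,\eta$ --- go through verbatim. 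This strengthened form of Urysohn (interpolating a closed neighborhood) is exactly what the paper itself uses in Lemma \ref{l:approx}; with that correction your proof is complete and essentially equivalent to the paper's.
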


\begin{proof}
$(2) \Rightarrow (1)$: Let $x\perp y$ in $C(K)$. We have that
\begin{eqnarray*}
\||Tx|\wedge|y|\|&=&\sup\{|[Tx](t)|\wedge|y(t)|:t\in K\}\\
&=&\sup\{|[Tx](t)|\wedge|y(t)|:t\in K, \,y(t)\neq 0\}\\
&\leq&\vr\|x\|,
\end{eqnarray*}
where the last inequality follows from the fact that if $y(t)\neq0$, then $x(t)=0$, together with the hypothesis.

$(1) \Rightarrow (2)$: Suppose first $t \notin \supp(x)$. By Urysohn's Lemma, there
exists $y \in C(K)$ so that $0 \leq y \leq |[Tx](t)|$, $y|_{\supp(x)} = 0$,
and $y(t) = |[Tx](t)|$. Then $x \perp y$, and
$$
|[Tx](t)|\leq\||Tx| \wedge y\|\leq\vr\|x\|.
$$

Now suppose $t \in \partial \supp(x)$. For $\delta > 0$, let
$$
x_\delta = (x_+ - \delta \one)_+ - (x_- - \delta \one)_+.
$$
Note that $x_+ - \delta \one \leq (x_+ - \delta \one)_+ \leq x_+$, hence
$\|x_+ - (x_+ - \delta \one)_+\| \leq \delta$. Similarly,
$\|x_- - (x_- - \delta \one)_+\| \leq \delta$. Thus, by the triangle inequality,
$\|x - x_\delta\| \leq 2\delta $.

Moreover, we claim that $t \notin \supp(x_\delta)$. Indeed, let us consider
the open set $U=\{s\in K:|x(s)|<\delta\}$. Clearly, $t\in U$, and for every
$s\in U$ $x_\delta(s)=0$. Thus, $U\cap\supp(x_\delta)=\emptyset$. By the preceding
paragraph, $|[Tx_\delta](t)| \leq \vr \|x_\delta\|$. As $T$ is continuous
(Theorem \ref{t:cont on CK}), we are done.
\end{proof}

\begin{theorem}\label{t:C(K)}
Suppose $K$ is a compact Hausdorff space.
If $T \in B(C(K))$ is $\vr$-BP, then there exists a BP operator $S \in B(C(K))$ so that
$\|T-S\| \leq 2\vr$. If $T$ is positive, then $S$ can be selected to be positive as well.
\end{theorem}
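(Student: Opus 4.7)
The natural candidate for a band-preserving approximant on $C(K)$ is a multiplication operator, so my plan is to set $\phi := T\one \in C(K)$ and define $S : C(K) \to C(K)$ by $Sx = \phi x$. Since $|Sx| = |\phi||x|$, the operator $S$ is an orthomorphism, hence band preserving, with $\|S\| = \|\phi\|_\infty \leq \|T\|$. If $T$ is positive, then $\phi = T\one \geq 0$ forces $S \geq 0$, which will handle the positive case automatically.

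The core estimate will use the pointwise characterization of $\vr$-BP maps on $C(K)$ from Lemma \ref{l:BP C(K)}. Given $x \in \ball(C(K))$ and $t \in K$, I would consider the auxiliary function
\[
y := x - x(t) \one,
\]
which lies in $C(K)$, satisfies $y(t) = 0$, and has norm $\|y\| \leq \|x\| + |x(t)| \leq 2$. Lemma \ref{l:BP C(K)} then yields $|[Ty](t)| \leq \vr \|y\| \leq 2\vr$. Since $Ty = Tx - x(t)\phi$, this rewrites as
\[
\bigl| [Tx](t) - \phi(t) x(t) \bigr| \leq 2\vr,
\]
and taking the supremum over $t \in K$ gives $\|Tx - Sx\|_\infty \leq 2\vr$, i.e.\ $\|T - S\| \leq 2\vr$.

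I do not expect any serious obstacle here: the linearity of $T$ together with the fact that $\one$ generates $C(K)$ as an ideal makes the multiplication-by-$T\one$ ansatz essentially forced, and the $\vr$-BP hypothesis, once translated via Lemma \ref{l:BP C(K)}, yields the pointwise bound directly. The only (very mild) thing to be careful about is the factor $2$ in $\|x - x(t)\one\| \leq 2\|x\|$; if one could somehow replace $y = x - x(t)\one$ by a function of norm comparable to $\|x\|$ still vanishing at $t$, the constant could be improved, but for the statement as given this crude bound suffices.
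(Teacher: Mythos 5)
Your proof is correct, and it takes a genuinely different (and shorter) route than the paper's. Both arguments start from the same ansatz $Sx=(T\one)x$ and both rest on the pointwise characterization of $\vr$-BP maps in Lemma \ref{l:BP C(K)} (which applies since $T$ is assumed bounded), but they diverge from there. You exploit the strong unit directly: $y=x-x(t)\one$ vanishes at $t$ and $\|y\|\leq 2\|x\|$, so the lemma and linearity give $|[Tx](t)-\phi(t)x(t)|=|[Ty](t)|\leq 2\vr$ in one line, the factor $2$ coming from the crude norm bound on $y$. The paper instead works on the dual side: it fixes $t$, decomposes the representing measure $\mu_t=T^*\delta_t$ as $c_t\delta_t+\nu_t$ with $\nu_t(\{t\})=0$, proves $\|\nu_t\|\leq\vr$ by combining the lemma (applied to functions vanishing at $t$) with regularity of $\nu_t$ and a Urysohn function, and then splits the error as $|c_t-\phi(t)|+\|\nu_t\|\leq\vr+\vr$. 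Your argument buys brevity and elementarity, needing nothing beyond the lemma and the fact that $\one$ is a strong unit; the paper's argument buys structural information, namely that each functional $T^*\delta_t$ lies within $\vr$ of the multiple $c_t\delta_t$ --- a finer statement than the aggregate $2\vr$ bound, and one whose method could survive in settings without a unit, where the trick $x-x(t)\one$ is unavailable. The remaining points in your write-up (that multiplication by $\phi=T\one\in C(K)$ is band preserving with $\|S\|=\|\phi\|_\infty\leq\|T\|$, and that positivity of $T$ forces $\phi\geq 0$ and hence $S\geq 0$) are exactly right, so your proof is complete as stated.
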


\begin{proof}
Let $\phi = T \one$, and show that the multiplication operator $S$ defined via
$S f = \phi f$ has the desired properties. To this end, for $t \in K$,
set $\mu_t = T^* \delta_t \in M(K) = C(K)^*$. Let $c_t = \mu_t(\{t\})$, and
$\nu_t = \mu_t - c_t \delta_t$. Clearly $\nu_t(\{t\}) = 0$.
We claim that $\|\nu_t\| \leq \vr$ -- that is, for any $f \in \ball(C(K))$,
$|\langle \nu_t, f \rangle| \leq \vr$.

Suppose first $f$ satisfies an additional condition $f(t) = 0$. Then, since $T$ is $\vr$-BP, by Lemma \ref{l:BP C(K)}, we get
$$
\vr \geq |Tf(t)| = | \langle \delta_t, T f \rangle | =
|\langle T^* \delta_t , f\rangle| = |\langle \nu_t, f \rangle| .
$$
For a generic $f \in \ball(C(K))$, fix $\sigma > 0$. By the regularity of $\nu_t$,
there exists an open neighborhood $U \ni t$ so that $|\nu_t|(U) < \sigma$.
Use Urysohn's Lemma to find $h \in C(K)$ so that $0 \leq h \leq 1$, $h(t) = 1$,
and $h|_{K \backslash U} = 0$. By the above, $|\langle \nu_t, fh \rangle| \leq \sigma$,
and $|\langle \nu_t, f(1-h) \rangle| \leq \vr$, which yields
$|\langle \nu_t, f \rangle| \leq \vr + \sigma$. As $\sigma$ can be arbitrarily
small, $|\langle \nu_t, f \rangle| \leq \vr$. 

Next note that
$$
\phi(t) = \langle T^* \delta_t, \one \rangle = c_t + \langle \nu_t, \one \rangle ,
$$
hence $|\phi(t) - c_t| \leq \vr$.  Finally, for $f \in \ball(C(K))$,
$$
[Tf](t) = \langle \delta_t, Tf \rangle = \langle c_t \delta_t + \nu_t, f \rangle =
c_t f(t) + \langle \nu_t, f \rangle ,
$$
hence
$$
\big| [Tf](t) - \phi(t) f(t) \big| \leq |c_t - \phi(t)| + \|\nu_t\| \leq 2 \vr .
$$
As this holds for any $t$, we are done.
\end{proof}

\begin{remark}
As a consequence, on $(C(K),\|\cdot\|_\infty)$, every $\vr$-BP operator belongs to
$2\vr-{\mathcal{Z}}(C(K))$).
\end{remark}

Recall that an operator between Banach lattices $T:X\rightarrow Y$ is $\vr$-disjointness preserving (in short, $\vr$-DP) if for $x\perp y$ in $X$ with $\|x\|,\|y\|\leq 1$ we have $\| |Tx| \wedge |Ty| \| \leq \vr$ \cite{OikTra}.

\begin{proposition}\label{p:BP=>DP C(K)}
If $K$ is a compact Hausdorff space, and $T \in B(C(K))$ is $\vr$-BP,
then $T$ is $\vr$-DP. Moreover, if
$x, y \in C(K)$ are disjoint, then $\| (Tx) (Ty) \| \leq \vr \|T\| \|x\| \|y\|$.
\end{proposition}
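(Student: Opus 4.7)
The plan is to exploit the pointwise characterization of $\vr$-BP operators on $C(K)$ given by Lemma \ref{l:BP C(K)}, together with the key observation that two disjoint elements $x,y \in C(K)$ cannot both be nonzero at the same point: for every $t \in K$, either $x(t) = 0$ or $y(t) = 0$.

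First I would handle the $\vr$-DP claim. Take disjoint $x,y \in \ball(C(K))$ and fix an arbitrary $t \in K$. By disjointness, at least one of $x(t), y(t)$ vanishes; by symmetry, assume $x(t) = 0$. Then Lemma \ref{l:BP C(K)} gives $|[Tx](t)| \leq \vr\|x\| \leq \vr$, so
$$
\bigl(|Tx| \wedge |Ty|\bigr)(t) \leq |[Tx](t)| \leq \vr .
$$
Taking the supremum over $t \in K$ yields $\||Tx| \wedge |Ty|\| \leq \vr$, which is the definition of $\vr$-DP.

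For the moreover part, I would repeat the pointwise argument but now combine the $\vr$-BP estimate with the trivial bound coming from $\|T\|$. Fix arbitrary $x,y \in C(K)$ with $x \perp y$, and $t \in K$. If $x(t) = 0$, then by Lemma \ref{l:BP C(K)} $|[Tx](t)| \leq \vr \|x\|$, while trivially $|[Ty](t)| \leq \|Ty\|_\infty \leq \|T\|\|y\|$; multiplying gives $|[Tx](t)\cdot [Ty](t)| \leq \vr \|T\| \|x\|\|y\|$. If instead $y(t) = 0$, the symmetric estimate yields the same bound. Taking the supremum over $t$ gives $\|(Tx)(Ty)\| \leq \vr \|T\|\|x\|\|y\|$.

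There is no substantial obstacle once Lemma \ref{l:BP C(K)} is in hand; the only mild subtlety is that Lemma \ref{l:BP C(K)} is stated for points $t$ where $x(t) = 0$ (as opposed to just $t \notin \supp(x)$), but this is exactly what the disjointness of $x$ and $y$ provides at every point where $y$ need not vanish, so the proof goes through directly.
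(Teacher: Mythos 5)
Your proof is correct and follows essentially the same route as the paper: a pointwise case analysis based on Lemma \ref{l:BP C(K)}, using that disjointness forces $x(t)=0$ or $y(t)=0$ at every $t\in K$. If anything, your version is slightly cleaner than the paper's, which phrases the ``moreover'' part in terms of $t\notin\supp(x)$ and $t\notin\supp(y)$ and thus leaves common boundary points of the two supports implicit, whereas your use of the hypothesis $x(t)=0$ covers all points of $K$ directly.
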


\begin{proof}
Consider disjoint $x, y \in C(K)$. By Lemma \ref{l:BP C(K)},
$\| |Tx| \wedge |Ty| \| \leq \vr \max\{\|x\|, \|y\| \}$. Thus, $T$ is $\vr$-DP.

Moreover, for $t \notin \supp(x)$,
$$
|[Tx](t)| |[Ty](t)| \leq \vr \|x\| \cdot \|T\| \|y\| =
\vr \|T\| \|x\| \|y\| ,
$$
and the same inequality holds for $t \notin \supp(y)$.
\end{proof}

\begin{remark}\label{r:renorm C(K)}
Suppose $K$ is a Hausdorff compact.
Recall that a space $C(K)$ is order continuous if and only if $K$ is a finite set.
If $K$ is infinite, then Proposition \ref{p:atom lim} gives a renorming of $C(K)$
for which the conclusion of Theorem \ref{t:C(K)} no longer holds. We outline
the construction from Proposition \ref{p:atom lim} for $C(K)$ spaces,
as it may be instructive. In fact, for $\vr > 0$ we equip $C(K)$
with an equivalent norm $\triple{ \cdot}$, and construct a positive contraction
$T \in \vr-{\mathcal{Z}}(C(K), \triple{ \cdot })$ so that $\triple{T-S} \geq 1/2$
for any BP operator $S$.

Since $K$ is infinite, it has an accumulation point $k$. Consider the norm
$$
\triple{x} = \max\{ \|x\|_\infty , \vr^{-1} |x(k)| \}
$$
($\| \cdot \|_\infty$ stands for the canonical $\sup$ norm on $C(K)$).
Clearly $\triple{ \cdot }$ is a lattice norm on $C(K)$, and
$\| x \|_\infty \leq \triple{x} \leq \vr^{-1} \| x \|_\infty$ for $x\in C(K)$.
We denote $(C(K), \triple{ \cdot})$ by $E$.

Consider the rank one operator $T: E \to E: x \mapsto x(k) \one$.
 Clearly $T \geq 0$, and for $x\in E$ we have
$$
\triple{Tx} = \triple{x(k) \one} = \max \{|x(k)|,\vr^{-1}|x(k)|\} \leq \triple{x} .
$$
Note also that $T\one=\one$, hence $\triple{T}=1$.
Also, $T\in\vr-{\mathcal{Z}}(E)$. Indeed, for $x\in\ball(E)$, set
$y = (|Tx| - |x|)_+$. Then $y(k) = 0$, while for $t \in K \backslash \{k\}$,
$|y(t)| \leq \vr$, hence $\triple{y} \leq \vr$. 

Now suppose, for the sake of contradiction, that a BP map $S:E\rightarrow E$
satisfies $\triple{T - S} < 1/2$. Note that $S$ is a multiplication operator:
there exists $\phi \in C(K)$ so that $Sx = \phi x$.
We claim that $|\phi(k)|< 1/2$. Indeed, take a net
$(k_\alpha)_\alpha\subset K\backslash\{k\}$ such that $k_\alpha\rightarrow k$.
By Urysohn's Lemma, for every $\alpha$ there is $x_\alpha\in C(K)$ such that
$0 \leq x_\alpha \leq x_\alpha(k_\alpha) = 1$, and $x_\alpha(k) = 0$.
Then $Tx_\alpha = 0$, hence $|\phi(k_\alpha)| \leq \|(S-T)x_\alpha\| \leq \|T-S\| < 1/2$.
By continuity, $|\phi(k)| \leq \triple{T-S} < 1/2$ as well.
On the other hand, since $\triple{\one}=\vr^{-1}$, we have
$$
\triple{T-S} \geq \triple{(T - S)\vr\one} \geq \vr^{-1} |[(T-S)\vr\one](k)| =
 |1 - \phi(k)| > \frac12 ,
$$
which is the desired contradiction.
\end{remark}

\section{A counterexample}\label{s:counter}

\begin{proposition}\label{p:better counter}
There exists a Banach lattice $E$ so that, for every $\vr > 0$, there
exists an $\vr$-BP contraction $T \in B(E)$ (actually, $T\in\vr-\mathcal Z(E)$) so that $\|T - S\| \geq 1/2$
whenever $S \in B(E)$ is BP.
\end{proposition}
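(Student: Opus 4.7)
The plan is to glue together the ``one-$\vr$-at-a-time'' examples of Remark \ref{r:renorm C(K)} into a single Banach lattice via a countable direct sum. Fix an infinite compact Hausdorff space $K$ (say $K=[0,1]$) and, for each $n\in\NN$, let $E_n=(C(K),\triple{\cdot}_n)$ denote the renorming of $C(K)$ produced in Remark \ref{r:renorm C(K)} for the value $1/n$; let $T_n\in B(E_n)$ be the associated positive rank-one contraction, which lies in $(1/n)-\mathcal{Z}(E_n)$ (with $\rho_{1/n}(T_n)\leq 1$) and satisfies $\triple{T_n-S}\geq 1/2$ for every BP operator $S\in B(E_n)$. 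Set
\[
E=\Bigl(\bigoplus_{n\in\NN} E_n\Bigr)_{c_0},
\]
endowed with the coordinate-wise order. This is a Banach lattice, and each summand $E_n$ sits inside $E$ as an ideal closed under arbitrary suprema (suprema in $E$ are computed coordinate-wise), hence as a band.

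Given $\vr>0$, choose $n$ with $1/n<\vr$ and define $T\in B(E)$ to act as $T_n$ on the $n$-th summand and as $0$ elsewhere. Then $\|T\|=\|T_n\|\leq 1$. Moreover, $T\in \vr-\mathcal{Z}(E)$: for $x=(x_k)_k\in\ball(E)$ apply the $(1/n)$-$\mathcal{Z}$ property of $T_n$ to $x_n$ to obtain $z_n\in E_n$ with $\triple{z_n}_n\leq 1/n$ and $|T_nx_n|\leq |x_n|+z_n$, embed $z_n$ into $E$ as an element $z$ supported on the $n$-th coordinate, and note $|Tx|\leq |x|+z$ with $\|z\|_E\leq 1/n<\vr$. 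In particular $T$ is $\vr$-BP.

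For the lower bound, let $S\in B(E)$ be BP. Since $E_n$ is a band, $S$ maps $E_n$ into itself, and the restriction $S_n=S|_{E_n}$ is a BP operator on $E_n$; similarly $(T-S)|_{E_n}=T_n-S_n$. Therefore
\[
\|T-S\|_{B(E)}\geq \triple{(T-S)|_{E_n}}_{B(E_n)}=\triple{T_n-S_n}_{B(E_n)}\geq 1/2,
\]
by the defining property of $T_n$ from Remark \ref{r:renorm C(K)}.

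The conceptual hurdle is simply identifying that a single $E$ suffices if one invokes a direct-sum construction: the examples of Remark \ref{r:renorm C(K)} already give the heart of the counterexample for each individual $\vr$, and the $c_0$-direct sum preserves the band structure of the summands, so band-preserving operators on $E$ automatically restrict to band-preserving operators on each $E_n$ and inherit the distance estimate there. Everything else in the argument is routine bookkeeping.
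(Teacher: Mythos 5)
Your proof is correct, but it takes a genuinely different route from the paper. The paper does not glue together the renormed $C(K)$ examples; instead it builds a single concrete function lattice $E \subset C[0,1]$ (continuous $x$ with $2^n|x(2^{-n})| \to 0$, normed by $\max\{\|x\|_\infty, \sup_n 2^n|x(2^{-n})|\}$) in which all the "weights" $\vr = 2^{-n}$ coexist at the points $2^{-n}$, and takes $T_n x = x(2^{-n})x_n$ for suitable bumps $x_n$. The hard step there is a classification result: every BP operator on $E$ is multiplication by a bounded continuous function on $(0,1]$, proved by first characterizing BP maps on $C(K;t_0)$ (Lemma \ref{l:C_0(K)}), patching over the sublattices $E_n$ of functions vanishing on $[0,2^{-n}]$, and using density; the distance estimate then comes from evaluating at $2^{-n}$ against the weight $2^n$. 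Your construction replaces all of this with soft structural facts: coordinate summands of a $c_0$-sum are ranges of band projections (hence bands), a band of a band is a band (so bounded BP operators on $E$ restrict to BP operators on each $E_n$ --- alternatively, and even more cheaply, restrict the central estimate $|Sx| \leq \|S\||x|$ to $E_n$ and invoke the AVK characterization), and the embeddings are isometric, so the lower bound $\triple{T_n - S_n} \geq 1/2$ from Remark \ref{r:renorm C(K)} transfers verbatim. What your approach buys is modularity and a general principle: any family of "one-$\vr$-at-a-time" counterexamples on possibly different lattices amalgamates into a single lattice via a $c_0$-sum, with all the analytic work outsourced to Proposition \ref{p:atom lim}. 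What the paper's approach buys is a naturally described, connected AM-space of continuous functions, which supports the follow-up observations (the identification $E \cong$ a sublattice of $C[0,1]\oplus_\infty c_0$ in Remark \ref{r:AM space}, and the automatic continuity claim of Remark \ref{r:what can we do}); amusingly, that identification shows the paper's example has a hidden direct-sum flavor anyway. Two trivial points you should make explicit: take $n \geq 2$ so that $1/n < \min(\vr,1)$, since Proposition \ref{p:atom lim} and Remark \ref{r:renorm C(K)} are set up for $\vr \in (0,1)$; and justify in one line why bands of $E_n$ are bands of $E$ (or use the central-estimate argument above), since "BP restricts to BP on a band" is the one place where your soft gluing could otherwise be challenged.
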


\begin{lemma}\label{l:C_0(K)}
Suppose $K$ is a compact Hausdorff space. For $t_0 \in K$,
$C(K;t_0) = \{x \in C(K): x(t_0) = 0\}$ is a closed sublattice of $C(K)$.
Then $T \in B(C(K;t_0))$ is BP if and only if there exists
a uniformly bounded continuous function $\phi$ on $K \backslash \{t_0\}$
so that $Tx = \phi x$ for any $x$.
\end{lemma}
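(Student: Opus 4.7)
The plan is to prove both implications in the manner of the classical ``BP $=$ multiplication'' result for $C(K)$ (as in Proposition \ref{p:ABP Kothe} and the opening of the proof of Theorem \ref{t:C(K)}), suitably adapted to accommodate the extra vanishing condition at $t_0$.

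The easy implication is routine. Suppose $Tx = \phi x$ for some bounded continuous $\phi$ on $K\setminus\{t_0\}$. Because $\phi$ is bounded and $x(t)\to 0$ as $t\to t_0$, the product $\phi x$ extends continuously to $t_0$ with value $0$, so $Tx \in C(K;t_0)$. Disjointness in $C(K;t_0)$ is pointwise, and pointwise multiplication by $\phi$ preserves zeros, so $|Tx|\wedge |y|=0$ whenever $x\perp y$; this gives band preservation.

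For the nontrivial direction, suppose $T\in B(C(K;t_0))$ is BP. I would define $\phi(t):=[Te_t](t)$ for each $t\in K\setminus\{t_0\}$, where $e_t\in C(K;t_0)$ is chosen with $0\le e_t\le \one$ and $e_t(t)=1$; such $e_t$ exists by Urysohn's Lemma applied to the disjoint closed sets $\{t\}$ and $\{t_0\}$. The hinge of the argument is the claim: \emph{if $x\in C(K;t_0)$ and $t\neq t_0$ satisfies $x(t)=0$, then $[Tx](t)=0$.} From this claim, applying it to $e_t^1 - e_t^2$ gives well-definedness of $\phi$; applying it to $x - x(t)e_t$ (when $x(t)\neq 0$) gives the multiplication formula $Tx = \phi x$ on $K\setminus\{t_0\}$; uniform boundedness $|\phi(t)|\leq \|T\|$ is immediate from $\|e_t\|\le 1$; and continuity of $\phi$ near $t$ follows by choosing $e\in C(K;t_0)$ equal to $1$ on a neighborhood $V$ of $t$, since $\phi|_V = (Te)|_V$.

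The crux, and the expected obstacle, is the claim. I would follow the pattern of Lemma \ref{l:BP C(K)}: first dispose of the case $t\notin\supp(x)$ by using Urysohn to construct $y\in C(K;t_0)$ disjoint from $x$ with $0\le y\le |[Tx](t)|\one$ and $y(t)=|[Tx](t)|$; the orthomorphism property of the BP map $T$ then gives $|Tx|\wedge y=0$, whose value at $t$ equals $|[Tx](t)|$, forcing $[Tx](t)=0$. For the remaining boundary case $x(t)=0$ with $t\in\partial\supp(x)$, I would approximate by $x_\delta := (x_+ - \delta\one)_+ - (x_- - \delta\one)_+$. The subtle point is that these truncations must themselves lie in $C(K;t_0)$; fortunately $x(t_0)=0$ forces $x_\pm(t_0)=0$, and hence $x_\delta(t_0)=0$. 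Since $x_\delta$ vanishes on the open neighborhood $\{s:|x(s)|<\delta\}$ of $t$ and $\|x-x_\delta\|\leq 2\delta$, the first case yields $[Tx_\delta](t)=0$, and the continuity of $T$ (assumed in the hypothesis $T\in B(C(K;t_0))$) lets $\delta\downarrow 0$ to conclude $[Tx](t)=0$.
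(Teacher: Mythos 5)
Your proof is correct, but it takes a genuinely different route from the paper's. The paper dualizes: it identifies $C(K;t_0)^*$ with the regular Radon measures $\mu$ on $K$ with $\mu(\{t_0\})=0$, and shows, using regularity of $\mu_t = T^*\delta_t$ together with a Urysohn construction, that $T^*\delta_t$ must be the point mass $\phi(t)\delta_t$ for every $t \neq t_0$; the multiplication formula and the continuity of $\phi$ then drop out, and uniform boundedness is proved last by a separate blow-up argument (a Tietze extension along a sequence tending to $t_0$). You instead stay entirely on the function side: your hinge is the localization claim that $x(t)=0$ with $t\neq t_0$ forces $[Tx](t)=0$, proved as the $\vr=0$ case of Lemma \ref{l:BP C(K)} — the off-support case via Urysohn and disjointness, the boundary case via the truncation $x_\delta=(x_+-\delta\one)_+-(x_--\delta\one)_+$ and the assumed continuity of $T$ — and you correctly flag the one new point this setting introduces, namely that $x_\delta$ remains in $C(K;t_0)$ because $x(t_0)=0$. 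Your route buys two things: it avoids measure theory altogether, and it gives the uniform bound $|\phi(t)|=|[Te_t](t)|\le\|T\|$ immediately from $\|e_t\|\le 1$, which is cleaner and sharper than the paper's Tietze blow-up. Two details you should write out when polishing: in the off-support case the Urysohn pair must be the disjoint closed sets $\supp(x)\cup\{t_0\}$ and $\{t\}$, so that the test function $y$ actually lies in $C(K;t_0)$ (implicit in your requirement, but it is where the point $t_0$ enters); and the passage from BP to the wedge identity you use — and likewise the easy direction — is cleanest via the observation that $Tx\perp y$ for all $y\perp x$ means exactly $Tx\in\{x\}^{\perp\perp}$, the band generated by $x$, which is contained in every band containing $x$. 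With those details filled in, the argument is complete.
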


\begin{proof}
Clearly the operator given by $T(x)=\phi x$ is BP. Conversely, suppose $T$ is BP. Then $T$ is
automatically bounded. Note that $C(K;t_0)^*$ is the quotient space of $C(K)^*$ by the
set of linear functionals annihilating $\{t_0\}$. That is, we can identify
$C(K;t_0)^*$ with the space of regular Radon measures $\mu$ on $K$ so that
$\mu(\{t_0\}) = 0$.

Consider $T^* \in B(C(K;t_0)^*)$. We claim that
there exists $\phi : K \backslash \{t_0\} \to {\mathbb{K}}$ so that
$T^* \delta_t = \phi(t) \delta_t$ for any $t \in K \backslash \{t_0\}$. Indeed, fix $t$, and set $\mu_t = T^* \delta_t$.
Suppose, for the sake of contradiction, that $|\mu_t|(K \backslash \{t\}) > 0$.
Then there exists an open set $U \supset \{t, t_0\}$ so that $|\mu_t|(K \backslash U) > 0$.
Then we can find $x \in C(K)$, vanishing on $U$, so that $\langle \mu_t , x \rangle > 0$.

Also, find $y \in C(K;t_0)$, vanishing outside of $U$, so that $y(t) = 1$.
Then $x \perp y$. However,
$$
\big[ Tx \big](t) = \langle \delta_t , Tx \rangle = \langle T^*\delta_t , x \rangle =
\langle \mu_t , x \rangle > 0 ,
$$
hence $[|Tx| \wedge |y|](t)\neq 0$, contradicting our assumption that $T$ is BP.

Next show that $\phi$ is continuous and uniformly bounded. For $x \in C(K;t_0)$
and $t \neq t_0$, we have
$$
[Tx](t) = \langle \delta_t, Tx \rangle = \langle T^* \delta_t, x \rangle =
\langle \phi(t) \delta_t, x \rangle = \phi(t) x(t) .
$$
This shows the continuity of $\phi$ away from $t_0$. If $\phi$ is not uniformly
bounded, then there exists a sequence $(t_k)$, convergent to $t_0$, so that
$|\phi(t_k)| > 4^k$ for any $k$. By Tietze Extension Theorem, we can find
$x \in C(K;t_0)$ so that $x(t_k) = 2^{-k}$. Then $Tx$ is unbounded, leading to a contradiction.
\end{proof}

\begin{proof}[Proof of Proposition \ref{p:better counter}]
The Banach lattice $E$ consists of all continuous functions $x$ on $[0,1]$,
satisfying $\underset{n \rightarrow\infty}{\lim}  2^n |x(2^{-n})| = 0$
(consequently $x(0) = 0$). Set
$$
\|x\| = \max \big\{ \|x\|_\infty, \sup_{n \in \NN} 2^n |x(2^{-n})| \big\} .
$$
For $n \geq 2$ let $x_n$ be a continuous function such that $0 \leq x_n \leq 1$,
$x_n(t) = 0$ for $t \leq 2^{-n-1}$ or $t\geq 2^{1-n}$, and $x_n(2^{-n}) = 1$.
Define $T_n x = x(2^{-n}) x_n$.

Clearly, $T_n$ is a contraction. We next show that, for every $x \in \ball(E)$,
$\| (|T_n x| - |x|)_+ \| \leq 2^{-n}$ (hence, $T_n \in 2^{-n}-\mathcal Z(E)$, and in particular $T_n$ is $2^{-n}$-BP).

If $x(2^{-n}) = 0$, then $T_n x = 0$. Otherwise $x(2^{-n}) = [T_n x](2^{-n})$,
and $[T_n x](2^{-m}) = 0$ for $m \neq n$. For $t \notin \{2^{-m} : m \in \NN\}$,
$$
(|T_n x| - |x|)_+(t) \leq |[T_n x](t)| \leq |x(2^{-n})| |x_n(t)| \leq 2^{-n}.
$$
Thus, we get
$$
\| (|T_n x| - |x|)_+ \| = \max \big\{ \| (|T_n x| - |x|)_+ \|_\infty,
  \sup_m 2^m (|T_n x| - |x|)_+(2^{-m})\big\} \leq 2^{-n} .
$$

Now suppose $S: E \to E$ is band-preserving (hence bounded).
We show that that there exists a uniformly bounded continuous function
$\phi : (0,1] \to {\mathbb{K}}$ so that $Sx = \phi x$ for any $x$.

Indeed, for any $n \in \NN$, denote by $E_n$ the sublattice of $E$
consisting of functions vanishing on $[0,2^{-n}]$. Note that $S$ takes $E_n$
into itself. Clearly $E_n$ is lattice isomorphic to $C([2^{-n},1],2^{-n})$,
hence, by Lemma \ref{l:C_0(K)}, there exists a uniformly bounded continuous function
$\phi_n : (2^{-n},1] \to {\mathbb{K}}$ so that $Sx = \phi_n x$
for any $x \in E_n$. Clearly $\phi_m|_{[2^{-n},1]} = \phi_n$ whenever $m > n$.
So there exists a function $\phi$, continuous on $(0,1]$, so that $Sx = \phi x$
for any $x \in E_\infty$, where $E_\infty=\bigcup_{n} E_n$ is the set of all elements of $E$
vanishing on a neighborhood of $0$.

Now set $C = \|S\|$, and show that $\sup_{t \in (0,1]} |\phi(t)| \leq C$.
Indeed, otherwise we can find $t \in (0,1] \backslash \{2^{-k} : k \in \NN\}$
so that $|\phi(t)| > C$. Find $m \in \NN$ so that $2^{-m} < t < 2^{1-m}$,
and consider $x \in C[0,1]$ so that $0 \leq x \leq 1 = x(t)$,
and $x = 0$ outside of $(2^{-m}, 2^{1-m})$. Then $\|x\| = 1$ and $\|Sx\| > C$,
a contradiction.

It is easy to see that $E_\infty$ is dense in $E$, hence by continuity,
$Sx = \phi x$ for any $x \in E$.

Now suppose, for the sake of contradiction, that there exists a BP map $S \in B(E)$
so that $\|T_n - S\| = c < 1/2$. We have shown that $S$ is implemented by
multiplication by a function $\phi$, continuous on $(0,1]$ and uniformly bounded.
That is, for any $x \in \ball(E)$, we have $\|T_nx - \phi x\| \leq c$.

Show first that, for $t \notin \{2^{-k} : k \in \NN\}$, $|\phi(t)| \leq c$.
To this end, find $n \in \NN$ so that $2^{-n-1} < t < 2^{-n}$.
Pick $x \in E$ so that $0 \leq x \leq 1 = x(t)$, and $x = 0$
outside of $(2^{-n-1}, 2^{-n})$. Then $\|x\| = 1$, $T_nx = 0$, and
$c \geq \|Sx\| \geq |\phi(t)|$. By continuity, $|\phi| \leq c$ everywhere.

Now consider $x \in E$ so that $x(2^{-n}) = 2^{-n}$, $0 \leq x \leq 1$, and
$x = 0$ outside of $(2^{-n-1}, 2^{1-n})$. Then $\|x\| = 1$, and
\begin{align*}
\|T_n x - Sx\|
&
\geq 2^n \big|[T_n x](2^{-n}) - \phi(2^{-n}) x(2^{-n})\big|
\\
&
=
2^n \big| 2^{-n} - 2^{-n} \phi(2^{-n}) \big| \geq
1 - |\phi(2^{-n})| \geq 1-c > \frac12 ,
\end{align*}
a contradiction.
\end{proof}

\begin{remark}\label{r:AM space}
The lattice $E$ from the proof of Proposition \ref{p:better counter} is an AM-space.
In fact,
$j : E \to C[0,1] \oplus_\infty c_0 : f \mapsto f \oplus (2^k f(2^{-k}))_{k \in \NN}$
is a lattice isometry.
Consequently, $E^*$ is an AL-space.
As $T_n^* \in \vr-{\mathcal{Z}}(E^*)$ for any $n$,
Theorem \ref{t:BP ocont} shows there exists a BP map $R_n \in B(E^*)$ so that
$\|T_n^* - R_n\| \leq 2^{2-n}$
However, such an $R_n$ cannot be an adjoint operator, for $n>3$.
\end{remark}

\begin{remark}\label{r:what can we do}
Arguing as in Theorem \ref{t:cont on CK} one can show that every $\vr$-BP linear map on the lattice
$E$ given in Proposition \ref{p:better counter} is automatically continuous.
\end{remark}

\end{document}